\documentclass{TeFlon}

\begin{document}
\incorporarDatos{no}{UCM}{Ciencias Matemáticas}{Matemáticas Avanzadas}{\textit{Directed by}\\ Jon Asier Bárcena Petisco \& Luis Vega González\vspace{2mm}}{2nd of September, 2024}{Madrid}{Gonzalo Romera Oña}{}

\tituloTFG
\begin{titlepage}
\vspace{4cm}
\centering
{\textsf{\huge  \textbf{Neural Networks in Numerical Analysis \\ and Approximation Theory}}}

\vspace{0.3cm}

\vspace{7cm}
\textit{Memory presented for the Master's Thesis}\\
\vspace{5mm}
{\large \textbf{Gonzalo Romera Oña}}

\vspace{2cm}
\textit{Directed by}\\
\vspace{5mm}
{\bfseries Jon Asier Bárcena Petisco \& Luis Vega González}

\vfill

\textbf{{Departamento Análisis Matemático y Matemática Aplicada\\
Facultad de Ciencias Matemáticas\\
Universidad Complutense de Madrid}}\\
\vspace{5mm}
\textbf{Madrid, 2024}
\end{titlepage}
\fantasma
\newpage
\chapter*{Abstract}
\noindent

In this Master Thesis, we study the approximation capabilities of Neural Networks in the context of numerical resolution of elliptic PDEs and Approximation Theory. First of all, in Chapter \ref{chap: intro NN}, we introduce the mathematical definition of Neural Networks and perform some basic estimates on their composition and parallelization. Then, we implement in Chapter \ref{chap: Mult NN} the Galerkin method using Neural Network. In particular, we manage to build a Neural Network that approximates the inverse of positive-definite symmetric matrices, which allows to get a Garlerkin numerical solution of elliptic PDEs. Finally, in Chapter \ref{chap: space NN}, we introduce the approximation space of Neural Networks, a space which consists of functions in $L^p$ that are approximated at a certain rate when increasing the number of weights of Neural Networks. We find the relation of this space with the Besov space: the smoother a function is, the faster it can be approximated with Neural Networks when increasing the number of weights.
\chapter*{Resumen}
\noindent

En esta Trabajo de Fin de Máster, estudiamos las capacidades de aproximación de las Redes Neuronales en el contexto de la resolución numérica de EDPs elípticas y la Teoría de Aproximación. En primer lugar, en el Capítulo \ref{chap: intro NN}, introducimos la definición matemática de la red neuronal y damos estimaciones básicas en su composición y paralelización. Entonces, implementamos en el Capítulo \ref{chap: Mult NN} el método de Galerkin utilizando Redes Neuronales. En particular, conseguimos construir una Red Neuronal que aproxima la inversa de matrices simétricas definidas positivas, lo que permite obtener una solución numérica Garlerkin de EDPs elípticas. En el capítulo \ref{chap: space NN}, introducimos el espacio de aproximación de las Redes Neuronales, un espacio que consiste en funciones en $L^p$ que se aproximan a un cierto ratio cuando se aumenta el número de pesos de las Redes Neuronales. También encontramos la relación de este espacio con el espacio de Besov: cuanto más suave es la función, más rápido se puede aproximar una función con Redes Neuronales.
\newpage
\chapter*{Introduction}
\noindent

In recent years, we have witnessed the rise of neural networks. Although its primary applications are for the resolution of classification problems as it is summarized in \cite{LIU201711}, in this Master Thesis we focus on its usage to numerical resolutions of partial differential equations (PDEs) and Approximation Theory.

Despite neural network popularity and the amount of related research, almost all knowledge comes from empirical evidence. For example, the paper that popularized Physics-Informed Neural Networks (PINNs), \cite{raissi2019physics}, substantiates their use uniquely with implementation examples. As presented in \cite{raissi2019physics}, the idea is to minimize, among neural networks, some functional measuring the error on the equation, initial and boundary conditions to get an approximate solution. The said experiments are a success in the sense that the approximation solution are close to the known exact solutions for several classic PDEs. Other example of this tendency is the paper introducing the \textit{transformers}, \cite{46201}, a major breakthrough, again based on experimental results.

The theoretical study of Neural Network is an emerging field with many open questions. Although significatively fewer than empirical evidences, there are some theoretical results. Let us present some of the main ones: 

Regarding approximation theory, the seminal paper \cite{cybenko1989approximation} in 1989 shows the density of neural networks on the space of continuous functions with a bounded domain. On the same year, the hypotheses of this result are reduced in the paper \cite{hornik1989multilayer}, and four years later in \cite{leshno1993multilayer}. Concerning density theorems, there are early results on Sobolev spaces of first order, see \cite{abdeljawad2022approximations} and \cite{shen2022approximation}. In the same idea, \cite{guhring2021approximation} shows that neural networks approximate all except the higher order derivatives of a function on a Sobolev space. Also, in \cite{gribonval2022approximation} the authors link the Besov spaces and the pace of approximation of functions by Neural Networks. 

Concerning numerical results, the paper \cite{kutyniok2022theoretical} gives Neural Networks which approximates the product and inverse of matrices. With these Neural Networks, the authors show the approximation capabilities of Neural Networks of solution of elliptic PDEs. The density theorems of the previous paragraph have numerous uses on numerical too.

Also, we can find in the literature an usage of neural network for inverse and control problems. The papers \cite{puthawala2022globally} and \cite{furuya2024globally} show the arbitrary proximity of injective neural networks and Neural Operators respectively. These results are specially relevant to approximate solutions of an inverse problem with neural networks or approximate the resolver directly. In \cite{garcia2023control}, the authors present how to use PINNs in the context of control problems and conduct some experiments.

In this Master Thesis we seek to explain some of the previously exposed theoretical results. For that, the structure of the thesis is the following one:

In Chapter \ref{chap: intro NN}, we define neural networks as a sequence of alternating affine and a component wise acting functions. The composition of these functions gives us the realization of the neural network. The properties of this family of functions are studied in this Master Thesis, considering the amount of parameters of a neural network, namely the number of associated matrices and vectors and the number of affine functions, need to be tracked. To this purpose, we introduce the notation from \cite{Elbr_chter_2021} so we can operate neural networks while getting estimates on the number of parameters. 

In Chapter \ref{chap: Mult NN}, our main objective is using the Galerkin method to show that neural networks can numerically solve elliptic PDEs, like our primary source \cite{kutyniok2022theoretical} does. The Galerkin method, as it is explained on the book \cite{Quarteroni2016}, relies on the truncation of a Hilbert base to give a discrete solution of an elliptic PDE. The main problem is then transformed to the inversion of a matrix, where we can apply neural networks. We show that neural networks can perform such a task with a reasonable size, and consequently we show the capability of neural networks to solve numerically PDEs.

We finish this work in Chapter \ref{chap: space NN} with a study on approximation spaces introduced in our primary source \cite{gribonval2022approximation}. As many universal approximation theorems (densisty results for Neural Networks) states, like the one in \cite{leshno1993multilayer}, increasing the size allows proximity to any reasonable function. This suggests working on the spaces of functions approximated at a fixed rate with respect the size of the neural network. The study culminates relating these spaces and Besov spaces: the smoother a function is, the faster it will be approximated when increasing the number of weights. 

\indiceTFG
\pagenumbering{arabic}
\parindent=0em
\chapter[Introduction to Neural Networks]{Introduction to Neural Networks} \label{chap: intro NN}

In this chapter, our focus is to introduce the Neural Networks and general results concerning them. The notation is selectively picked from \cite{Elbr_chter_2021} and \cite{gribonval2022approximation}. The main point is to allow a better flow of results on the following chapters.

\section{Neural Network Definition and its Parameters.}

\begin{deff}[Neural Network]\label{def: NN}
    Let $L\in\N$, $n_0, \dots, n_L\in\N$ and, for $\ell = 1,\dots, L$, $A_\ell \in \R^{n_\ell\times n_{\ell-1}}$, $b_\ell \in \R^{n_\ell}$. Then, a \emph{Neural Network}, from now on \emph{NN} for short, $\phi$ is a tuple of pairs consisting of affine functions
    \[
    \func{T_\ell}{\R^{n_{\ell-1}}}{\R^{n_\ell}}{x}{A_\ell x + b_\ell} \quad \txt{ for } \ell = 1, \dots, L
    \]
    and so called \emph{activation functions} $\alpha_\ell: \R^{n_\ell} \longrightarrow \R^{n_\ell}$ for $\ell = 1, \dots, L$. We denote
    \[
    \phi = ((T_1, \alpha_1), (T_2, \alpha_2), \dots, (T_L, \alpha_L)).
    \]
\end{deff}

We immediately observe that, when working with NN, we have a lot of degrees of freedom. To keep track of said parameters, we introduce the following notation:

\begin{deff}\label{norm 0 def}
    Let $n,m\in\N$ and $A = (a_{i,j})_{i,j=1}^{n.m}\in\R^{n\times m}$. Then, we define
    \[
    \Norm{A}_0 \coloneqq \#\left\{(i,j) \;\middle |\; a_{i,j} \neq 0\right\},
    \]
    where $\#$ is the cardinal of the set.
\end{deff}
It can be seen that $\Normm_0$ satisfies all properties of a norm except the absolute homogeneity. Also we have the following property:

\begin{lemma}\label{norm 0 prop}
    Let $A\in\R^{d\times d}$, $B\in\R^{d\times l}$ be matrices and $v\in\R^{1\times n}$ a row vector. Then
    \[
    \Norm{vA}_0\leq\Norm{A}_0
    \]
    and if every row of $B$ has at most one non-zero entry then
    \[
    \Norm{AB}_0\leq\Norm{A}_0.
    \]
\end{lemma}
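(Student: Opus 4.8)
The plan is to prove each inequality by constructing an injective map from the set of nonzero positions of the product matrix into the set of nonzero positions of $A$; since $\Normm_0$ counts exactly these positions, injectivity delivers the desired bound.

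For the first claim (reading $v\in\R^{1\times d}$, so that $vA$ is a well-defined row vector), I would argue that whenever the $j$-th entry $(vA)_j=\sum_i v_i a_{i,j}$ is nonzero, at least one summand must be nonzero, so there is an index $i(j)$ with $a_{i(j),j}\neq 0$; taking, say, the smallest such $i(j)$ makes the choice unambiguous. Then $j\mapsto (i(j),j)$ sends nonzero positions of $vA$ to nonzero positions of $A$, and it is trivially injective because the second coordinate recovers $j$. This gives $\Norm{vA}_0\le\Norm{A}_0$.

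For the second claim I would do the analogous thing with $(AB)_{i,k}=\sum_j a_{i,j}b_{j,k}$: if this entry is nonzero there is some $j$ with $a_{i,j}\neq 0$ and $b_{j,k}\neq 0$, and I map $(i,k)$ to $(i,j(i,k))$ for a fixed such choice of $j(i,k)$. Here the hypothesis that every row of $B$ has at most one nonzero entry is exactly what is needed: if $(i,k)$ and $(i',k')$ had the same image, then $i=i'$ and the common index $j$ would satisfy $b_{j,k}\neq 0$ and $b_{j,k'}\neq 0$, forcing $k=k'$. Injectivity again yields $\Norm{AB}_0\le\Norm{A}_0$.

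I expect the proof to be short; the only real subtlety, and the one worth spelling out carefully, is the injectivity of the second map, since this is the single place where the structural assumption on $B$ enters — drop it and two distinct output columns could funnel through the same column of $A$, breaking the count.
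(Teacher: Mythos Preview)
Your proof is correct: the injection-counting argument is clean, and you correctly identify that the row hypothesis on $B$ is precisely what forces injectivity in the second map. The paper itself does not give a proof of this lemma but simply defers to Lemma~A.1 in \cite{kutyniok2022theoretical}, so there is no in-paper argument to compare against; your write-up would serve perfectly well as the missing proof.
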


For a proof, see Lemma A.1 in \cite{kutyniok2022theoretical}.

\begin{deff}\label{NN parameters}
    Let $\phi = ((T_1, \alpha_1), \dots, (T_L, \alpha_L))$ be a NN where $n_0, \dots, n_L \in \N$ and for $\ell = 1, \dots, L$,
    \begin{align*}
        A_\ell \in \R^{n_\ell \times n_{\ell - 1}}, && b_\ell \in \R^{n_\ell}, && \func{T_\ell}{\R^{n_{\ell - 1}}}{\R^{n_\ell}}{x}{A_\ell x + b_\ell}.
    \end{align*}
    We define the following terms related to the parameters of $\phi$:
    \begin{enumerate}
        \item \label{enu: number of layers}
        Number of layers of $\phi$
        \[
        L(\phi) \coloneqq L.
        \]

        \item \label{enu: input output}
        Input and output dimension of $\phi$, respectively,
        \begin{align*}
            \dimin \phi \coloneqq n_0 && \txt{ and } && \dimout \phi \coloneqq n_L.
        \end{align*}

        \item \label{enu: number neurons}
        Number of neurons of $\phi$
        \[
        N(\phi) \coloneqq \sum_{\ell = 1}^{L - 1} n_\ell.
        \]

        \item \label{enu: conectivity}
        Connectivity of $\phi$
        \[
        C(\phi) \coloneqq \sum_{\ell = 1}^L \Norm{A_\ell}_0.
        \]

        \item \label{enu: number weights at a layer}
        Number of weights at the layer $\ell \in \{1, \dots, L\}$ of $\phi$
        \[
        M_\ell(\phi) \coloneqq \Norm{A_\ell}_0 + \Norm{b_\ell}_0.
        \]

        \item \label{enu: number weights}
        Number of weights of $\phi$
        \[
        M(\phi) \coloneqq \sum_{\ell = 1}^L M_\ell(\phi).
        \]

        \item \label{enu: weights}
        Weights of $\phi$
        \[
        W(\phi) \coloneqq ((A_1, b_1), \dots, (A_L, b_L)).
        \]
    \end{enumerate}
\end{deff}
The number of layers, neurons, connectivity and weights of a NN $\phi$ let us describe its \emph{size}. Indeed, since NNs are implemented on computers, we need to track these parameters as well as the non zero entries of the involved matrices. The input and output dimensions tell us the domain and codomain of the following map related to NN.
\begin{deff}[Realization]\label{NN realization}
    Let $\phi = ((T_1, \alpha_1), \dots, (T_L, \alpha_L))$ be a NN. Then, the realization of $\phi$ is the map
    \[
    R(\phi) \coloneqq \alpha_L \circ T_L \circ \dots \circ \alpha_1 \circ T_1.
    \]
\end{deff}
Since we can add the identity as affine and activation functions to any NN and get the same realization function, it is convenient to differentiate the NN and its realization. This is important specially in its computational applications.

\section{Basic Results.}

We are going to focus on the following networks:

\begin{deff}
    Let $\varrho: \R \longrightarrow \R$ be a function and $\phi = ((T_1, \alpha_1), \dots, (T_L, \alpha_L))$ a NN such that for all $\ell \in \{1, \dots, L-1\}$,
    \[
    \func{\alpha_\ell}{\R^{n_\ell}}{\R^{n_\ell}}{x}{ ( \alpha_{\ell, 1}(x_1), \dots, \alpha_{\ell, n_\ell}(x_{n_\ell}))}
    \]
    where $n_\ell \in \N$ and $\alpha_L = \Id_{n_L}: \R^{n_L} \longrightarrow \R^{n_L}$ is the identity.
    
    If $\alpha_{\ell, i} \in \{\varrho, \Id\}$ for all $\ell \in \{1, \dots, L - 1\}$ and $i \in \{1, \dots, n_\ell\}$ then we say that $\phi$ is a \emph{$\varrho$-NN}. 
    
    If $\alpha_{\ell, i} = \varrho$ for all $\ell \in \{1, \dots, L - 1\}$ and $i \in \{1, \dots, n_\ell\}$ then we say that $\phi$ is a \emph{strict $\varrho$-NN}.
\end{deff}

Note that a strict $\varrho$-NN is univocally determined by its weights, so, when working with strict $\varrho$-NN, it induces us to introduce the following notation: 
\begin{deff}\label{def: strict NN by weights}
    Let $w = ((A_1, b_1), \dots, (A_L, b_L))$ be a list of pairs of matrices and vectors. If the number of columns of $A_\ell$ coincides with the number of rows of $A_{\ell + 1}$ for $\ell = 1, \dots, L - 1$ and the number of rows of $A_\ell$ and $b_{\ell}$ are the same for $\ell = 1, \dots, L$, then we denote by $W^{-1}_\varrho(w)$ the only strict $\varrho$-NN such that
    \[
    W(W^{-1}_\varrho((w)) = w.
    \]
\end{deff}

We now introduce an important operation between $\varrho$-NN.

\begin{deff}\label{concatenation def}
    Let $\varrho: \R \longrightarrow \R$ be a function and
    \begin{align*}
        \phi^1 = ((T_1^1, \alpha_1^1), \dots, (T_{L_1}^1, \alpha_{L_1}^1)) && \txt{ and } && \phi^2 = ((T_1^2, \alpha_1^2), \dots, (T_{L_2}^2, \alpha_{L_2}^2))
    \end{align*}
    two $\varrho$-NN such that $\dimout \phi^2 = \dimin \phi^1$. Then, we define the concatenation of $\phi^1$ and $\phi^2$ as the $\varrho$-NN
    \[
    \phi^1 \bullet \phi^2 = ((T_1^2, \alpha_1^2), \dots, (T_{L_2-1}^2, \alpha_{L_2-1}^2), (T_1^1 \circ T_{L_2}^2, \alpha_1^1), \dots, (T_{L_1}^1, \alpha_{L_1}^1)).
    \]
    Remember that since $\phi^2$ is a $\varrho$-NN (see Definition \ref{def: NN}), $\alpha^2_{L_2} = \Id_{\R^{\dimout \phi^2}}$.
\end{deff}

For some of the results that we will use, we need to use concrete activation functions. We introduce
\begin{equation}\label{def: ReLU}
    \func{\ReLU}{\R}{\R}{x}{\max\{0, x\}}
\end{equation}
and for $r \in \N$,
\begin{equation}\label{def: ReLUr}
    \func{\varrho_r}{\R}{\R}{x}{\ReLU(x)^r}.
\end{equation}
Their interest is justified by its use on the applied field: see for example the first appearance of $\ReLU$ in the literature \cite{46201}, a contest winning NN \cite{fukushima1969visual} and its use in transformers \cite{krizhevsky2012imagenet}.
These functions have interesting properties but to show them, we introduce the following term.

\begin{deff}
    Let $\varrho: \R \longrightarrow \R$ and $f: \R \longrightarrow \R$ be two functions. We say that $\varrho$ can represent $f$ with $n \in \N$ terms if there exists a strict $\varrho$-NN $\phi^f$ with $L(\phi^f) = 2$ and $N(\phi^f) = n$. In other words, if there exists $a = (a_i)_{i = 1}^n, b = (b_i)_{i = 1}^n, c = (c_i)_{i = 1}^n \in \R^n$ and $d \in \R$ such that, for all $x \in \R$,
    \[
    f(x) = d + \sum_{i = 1}^n c_i \varrho(a_i x + b_i).
    \]
\end{deff}

\begin{lemma}\label{ReLU represents id}
    $\ReLU$ can represent $\Id$ with 2 terms
\end{lemma}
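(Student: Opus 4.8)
The goal is to exhibit explicit constants $a, b, c \in \R^2$ and $d \in \R$ so that $x = d + \sum_{i=1}^{2} c_i \ReLU(a_i x + b_i)$ for all $x \in \R$. The plan is to use the elementary identity $x = \ReLU(x) - \ReLU(-x)$, which holds because $\ReLU(x) - \ReLU(-x) = \max\{0,x\} - \max\{0,-x\}$ equals $x$ when $x \geq 0$ and equals $-(-x) = x$ when $x < 0$.

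Concretely, I would set $n = 2$, choose $a = (1, -1)$, $b = (0, 0)$, $c = (1, -1)$, and $d = 0$. Then for every $x \in \R$,
\[
d + \sum_{i=1}^{2} c_i \ReLU(a_i x + b_i) = \ReLU(x) - \ReLU(-x) = x,
\]
which is exactly the representation required by the definition, with the associated strict $\ReLU$-NN having $L(\phi^f) = 2$ layers and $N(\phi^f) = 2$ neurons in its single hidden layer (the first affine map sends $x \mapsto (x, -x) \in \R^2$ with $A_1 = \binom{1}{-1}$ and $b_1 = 0$, and the second affine map sends $(y_1, y_2) \mapsto y_1 - y_2$ with $A_2 = (1, \; -1)$ and $b_2 = 0$).

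There is essentially no obstacle here; the only thing to be careful about is bookkeeping — verifying that the claimed tuple of weights does indeed define a \emph{strict} $\ReLU$-NN in the sense of Definition \ref{def: strict NN by weights} (dimensions match: $A_1 \in \R^{2\times 1}$, $A_2 \in \R^{1\times 2}$), that it has exactly two layers, and that its hidden layer has exactly two neurons, so that the representation count is $n = 2$ as stated. The case split $x \geq 0$ versus $x < 0$ to confirm the identity is immediate.
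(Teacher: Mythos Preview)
Your proof is correct and uses exactly the same identity $x = \ReLU(x) - \ReLU(-x)$ as the paper; the paper's proof consists of nothing more than stating this observation, while you additionally spell out the explicit weights and verify the layer/neuron counts.
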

\begin{proof}
    It is as easy as making the following observation
    \begin{equation} \label{eq: ReLU represents id}
        x = \ReLU(x) - \ReLU(-x).
    \end{equation}
\end{proof}

As an immediate consequence we get

\begin{corollary}\label{ReLU represents idn}
    For every $n, L \in \N$, there exists a strict $\ReLU$-NN $\phi^{\Id}_{n, L}$ such that
    \begin{align*}
        R(\phi^{\Id}_{n, L}) = \Id_n && \txt{ and } && L(\phi^{\Id}_{n, L}) = L.
    \end{align*}
\end{corollary}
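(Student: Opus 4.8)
The plan is to give an explicit construction of $\phi^{\Id}_{n,L}$, treating the degenerate case $L=1$ separately. If $L=1$, a strict $\ReLU$-NN has an empty set of hidden activations and $\alpha_1=\alpha_L=\Id$, so its realization is simply the affine map $T_1$; taking $A_1$ equal to the $n\times n$ identity matrix and $b_1=0$ gives a strict $\ReLU$-NN with $L(\phi)=1$ and $R(\phi)=\Id_n$.

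For $L\ge 2$ I would combine two observations: first, the identity $x=\ReLU(x)-\ReLU(-x)$ of Lemma \ref{ReLU represents id}, applied componentwise; and second, that $\ReLU$ restricted to the nonnegative reals is the identity. Concretely, choose $A_1\in\R^{2n\times n}$ to be the matrix stacking the $n\times n$ identity on top of its negative, and $b_1=0$, so that after the first activation the network holds the vector whose entries are $\ReLU$ of the components of $x$ and of $-x$; this vector lies in $[0,\infty)^{2n}$. For the intermediate layers $\ell=2,\dots,L-1$ (if there are any), take $A_\ell$ to be the $2n\times 2n$ identity and $b_\ell=0$; since the incoming vector is nonnegative, the $\ReLU$ activation leaves it unchanged. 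Finally, for layer $L$ take $A_L\in\R^{n\times 2n}$ to be the block matrix $(I\mid -I)$ with $I$ the $n\times n$ identity, $b_L=0$, and recall $\alpha_L=\Id$, so that the output equals $\ReLU(x)-\ReLU(-x)=x$.

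It then remains only to check the bookkeeping: the affine maps compose because the layer widths are $n_0=n_L=n$ and $n_1=\dots=n_{L-1}=2n$; every hidden activation is literally $\ReLU$, so $\phi^{\Id}_{n,L}$ is a \emph{strict} $\ReLU$-NN; $L(\phi^{\Id}_{n,L})=L$ by construction; and unravelling the composition of affine maps and activations as above yields $R(\phi^{\Id}_{n,L})(x)=x$ for every $x\in\R^n$. I do not anticipate any genuine difficulty: the only subtle points are the case $L=1$, where there are no hidden layers at all, and the remark that padding with extra $\ReLU$ layers after the ``sign-splitting'' first layer does not alter the stored value because the relevant inputs are nonnegative. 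Alternatively, one can build $\phi^{\Id}_{n,L}$ by concatenating $L-1$ copies of the two-layer strict $\ReLU$-NN with weights $((A_1,0),(A_L,0))$ above via the operation $\bullet$ of Definition \ref{concatenation def}, since each concatenation raises the number of layers by one and the realization of a concatenation is the composition of the realizations.
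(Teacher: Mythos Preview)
Your proof is correct and follows essentially the same approach as the paper: split the case $L=1$ (take $A_1=\Eye_n$, $b_1=0$), and for $L\ge 2$ use the sign-splitting first layer $\bigl(\begin{smallmatrix}\Eye_n\\-\Eye_n\end{smallmatrix}\bigr)$, intermediate identity layers that act trivially on nonnegative vectors, and a final recombination layer $(\Eye_n\ -\Eye_n)$. Your additional remark on the alternative via repeated $\bullet$-concatenation is a nice touch but not needed.
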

\begin{proof}
    For $L = 1$, the strict $\ReLU$-NN $\phi^{\Id}_{n, 1}$ with $W(\phi^{\Id}_{n, 1}) = ((\Eye_n, 0))$ where \\$\Eye_n = (\delta_{i,j})_{i, j = 1}^n \in \{0, 1\}^{n \times n}$ satisfies trivially the statement.

    For $L \geq 2$, the strict $\ReLU$-NN $\phi^{\Id}_{n, L}$ such that
    \[
    W(\phi^{\Id}_{n, L}) = \left ( \left ( \begin{bmatrix*} \Eye_n \\ - \Eye_n \end{bmatrix*}, 0 \right ), \overbrace{(\Eye_n, 0), \dots, (\Eye_n, 0)}^{L - 2 \txt{ times}}, \left ( \begin{bmatrix*} \Eye_n & - \Eye_n \end{bmatrix*}, 0 \right ) \right )
    \]
    satisfies the statement by equation \eqref{eq: ReLU represents id}.
\end{proof}

Throughout the next chapter, where we focus on strict $\ReLU$-NNs, our objective is to bound the number of weights and layers of different NNs. With this aim in mind, we define the following operations.

\begin{deff}\label{sparse concatenation}
    Let $\phi^1$ and $\phi^2$ be two strict $\ReLU$-NNs. If $n = \dimin\phi^1 = \dimout\phi^2$ then the sparse concatenation of $\phi^1$ and $\phi^2$ is the strict $\varrho$-NN defined as
    \[
    \phi^1 \odot \phi^2 \coloneqq \phi^1\bullet\phi_{n, 2}^{\Id}\bullet\phi^2.
    \]
\end{deff}
\begin{deff}\label{def: paralelization}
    Let $\{\widetilde{\phi}^i\}_{i=1}^k$ be a sequence of strict $\ReLU$-NNs such that, for any $i\in\{1,\dots,k\}$,
    \begin{align*}
        W(\widetilde{\phi}^i) =((A_1^i,b_1^i),\ldots,(A_{L_i}^i, b_{L_i}^i)), && 
        \dimin\widetilde{\phi}^1 =\dots = \dimin\widetilde{\phi}^k && \txt{ and } &&  L = \max\limits_{i =1,\ldots,k} L_i.
    \end{align*}
    If $L = L_i$ for all $i\in\{1,\ldots,k\}$ then we define the parallelization of $\widetilde{\phi}^1,\ldots,\widetilde{\phi}^k$ as the strict $\ReLU$-NN such that
    \begin{multline*}
        W(P(\widetilde{\phi}^1,\ldots,\widetilde{\phi}^k)) \coloneqq \lr{(}{\lr{(}{\begin{bmatrix*}
        A_1^1 & 0 &\cdots & 0\\
        0 & A_1^2 & \cdots &0\\
        \vdots &\vdots &\ddots &\\
        0 & 0 & & A_1^k
        \end{bmatrix*}, \begin{bmatrix*}
        b_1^1\\b_1^2\\ \vdots \\ b_1^k
        \end{bmatrix*}}{)},\ldots,}{.}\\
        \lr{.}{\lr{(}{\begin{bmatrix*}
        A_L^1 & 0 &\cdots & 0\\
        0 & A_L^2 & \cdots &0\\
        \vdots &\vdots &\ddots &\\
        0 & 0 & & A_L^k
        \end{bmatrix*}, \begin{bmatrix*}
        b_L^1\\b_L^2\\ \vdots \\ b_L^k
        \end{bmatrix*}}{)}}{)}.
    \end{multline*}
    If $L > L_j$ for some $j\in\{1,\ldots,k\}$, we set 
    \begin{align*}
        \text{if } L_i<L,\; \phi^i =& \phi_{\dimout\widetilde{\phi}^i,L-L_i}^{\Id}\odot\widetilde{\phi}^i,\\
        \text{if } L_i=L,\; \phi^i =& \widetilde{\phi}^i,
    \end{align*}
    and we define the parallelization of $\widetilde{\phi}^1,\ldots,\widetilde{\phi}^k$ as
    \[P(\widetilde{\phi}^1,\ldots,\widetilde{\phi}^k) = P(\phi^1,\ldots,\phi^k).\]
\end{deff}
Regarding the parameters of NN, we have the following estimates. 
\begin{lemma}\label{bounds para concat}
    Let $\phi^1, \ldots, \phi^k$ be strict $\ReLU$-NNs. If $\dimin\phi^1 = \dimout\phi^2$ then
    \begin{enumerate}
        \item[$(a.1)$] $R(\phi^1\odot\phi^2) = R(\phi^1) \circ R(\phi^2)$,
        \item[$(a.2)$] $L(\phi^1\odot\phi^2) = L(\phi^1)+L(\phi^2)$,
        \item[$(a.3)$] $M(\phi^1\odot\phi^2) \leq M(\phi^1) + M(\phi^2) + M_1(\phi^1) + M_{L(\phi^2)}(\phi^2) \leq 2(M(\phi^1)+M(\phi^2)),$
        \item[$(a.4)$] $M_1(\phi^1 \odot \phi^2) = M_1(\phi^2)$ and $M_{L(\phi^1 \odot \phi^2)}(\phi^1 \odot \phi^2) = M_{L(\phi^1)}(\phi^1)$.
    \end{enumerate}
    If $n = \dimin \phi^i$ for $i \in \{1, \ldots, k\}$ then
    \begin{enumerate}
        \item[$(b.1)$] $R(P(\phi^1, \ldots, \phi^k))(x_1, \ldots, x_k) = (R(\phi^1)(x_1), \ldots, R(\phi^k)(x_k))$ for all $x_1, \ldots, x_k \in \R^n$,
        \item[$(b.2)$] $L(P(\phi^1, \ldots, \phi^k)) = \max\limits_{i=1, \ldots, k} L(\phi^i)$,
        \item[$(b.3)$] $M_1(P(\phi^1, \ldots, \phi^k)) = \sum\limits_{i = 1}^k M_1(\phi^i)$,
        \item[$(b.4)$] $M_L(P(\phi^1, \ldots, \phi^k)) \leq \sum\limits_{i = 1}^k \max \{2 \dimout(\phi^i), M_{L(\phi^i)}(\phi^i)\}$,
        \item[$(b.5)$] $M(P(\phi^1, \ldots, \phi^k)) \leq 2 \sum\limits_{i = 1}^k M(\phi^i) + 4L \sum\limits_{i=1}^k \dimout(\phi^i)$,
    \end{enumerate}
    and if $L = L(\phi^1) = \ldots = L(\phi^k)$,
    \begin{enumerate}
        \item[$(b.6)$] $M_L(P(\phi^1, \ldots, \phi^k)) = \sum\limits_{i = 1}^k M_L(\phi^i)$,
        \item[$(b.7)$] $M(P(\phi^1, \ldots, \phi^k)) = \sum\limits_{i = 1}^k M(\phi^i)$.
    \end{enumerate}
\end{lemma}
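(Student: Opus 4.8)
The plan is to reduce both operations to two elementary ingredients: the behaviour of the plain concatenation $\bullet$, and the explicit description of the identity networks $\phi^{\Id}_{n, L}$ from Corollary \ref{ReLU represents idn}. So I would first record two facts that follow directly from Definition \ref{concatenation def}, namely $R(\phi^1\bullet\phi^2)=R(\phi^1)\circ R(\phi^2)$ and $L(\phi^1\bullet\phi^2)=L(\phi^1)+L(\phi^2)-1$, the depth dropping by one because the affine maps $T^1_1$ and $T^2_{L_2}$ are fused into $T^1_1\circ T^2_{L_2}$. Applying these twice to $\phi^1\odot\phi^2=\phi^1\bullet\phi^{\Id}_{n, 2}\bullet\phi^2$ and using $R(\phi^{\Id}_{n, 2})=\Id_n$, $L(\phi^{\Id}_{n, 2})=2$ immediately yields $(a.1)$ and $(a.2)$.

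For $(a.3)$ and $(a.4)$ I would write $W(\phi^1\odot\phi^2)$ out layer by layer. Away from the two fusion points the layers of $\phi^2$ and of $\phi^1$ are copied verbatim. At the first fusion point the last affine map of $\phi^2$ is pre-composed with the first weight matrix of $\phi^{\Id}_{n, 2}$ (the matrix $\Eye_n$ stacked over $-\Eye_n$), so the new matrix is $A^2_{L(\phi^2)}$ stacked over $-A^2_{L(\phi^2)}$ and the new bias is $b^2_{L(\phi^2)}$ stacked over $-b^2_{L(\phi^2)}$, giving exactly $2M_{L(\phi^2)}(\phi^2)$ weights. At the second fusion point the first affine map of $\phi^1$ is post-composed with the second weight matrix of $\phi^{\Id}_{n, 2}$ ($\Eye_n$ beside $-\Eye_n$), so the new matrix is $A^1_1$ beside $-A^1_1$ with bias $b^1_1$ unchanged, giving $2\Norm{A^1_1}_0+\Norm{b^1_1}_0$ weights. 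Summing the four contributions and cancelling yields $M(\phi^1\odot\phi^2)=M(\phi^1)+M(\phi^2)+\Norm{A^1_1}_0+M_{L(\phi^2)}(\phi^2)$, which gives $(a.3)$ since $\Norm{A^1_1}_0\le M_1(\phi^1)$ and $M_\ell(\phi)\le M(\phi)$; and since (for the networks of depth at least $2$) the first layer of $\phi^1\odot\phi^2$ is an untouched copy of the first layer of $\phi^2$ and the last layer an untouched copy of the last layer of $\phi^1$, $(a.4)$ follows. This is exactly the point where inserting $\phi^{\Id}_{n, 2}$ pays off over $\bullet$: each fused matrix stays sparse, its connectivity merely doubling, instead of $A^1_1A^2_{L_2}$ blowing up.

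For the parallelization I would first dispatch the case $L(\phi^1)=\dots=L(\phi^k)=L$, in which every weight matrix of $P(\phi^1,\dots,\phi^k)$ is block-diagonal with blocks $A^1_\ell,\dots,A^k_\ell$ and every bias is the stacking of $b^1_\ell,\dots,b^k_\ell$. As the off-diagonal blocks vanish, $\Norm{\cdot}_0$ is additive over the blocks, so $M_\ell(P(\phi^1,\dots,\phi^k))=\sum_i M_\ell(\phi^i)$ for each $\ell$; specialising to $\ell=1$ and $\ell=L$ and summing over $\ell$ gives $(b.3)$, $(b.6)$, $(b.7)$, while an induction on layers, using that block-diagonal affine maps and componentwise activations both respect the block partition, gives $(b.1)$ and $(b.2)$. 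The general case reduces to this one through the padding $\phi^i\longmapsto\phi^{\Id}_{\dimout\phi^i,\,L-L_i}\odot\phi^i$ that is built into Definition \ref{def: paralelization}: by $(a.1)$--$(a.4)$ the padded networks realize the same functions, have common depth $L$, and keep an unchanged first layer, which promotes $(b.1)$, $(b.2)$, $(b.3)$ to full generality; for $(b.4)$ one adds that the last layer of $\phi^{\Id}_{d, m}$ carries at most $2d$ weights, and for $(b.5)$ one feeds the estimates $M(\phi^{\Id}_{d, m})\le(m+2)d$ and $M_1(\phi^{\Id}_{d, m})\le 2d$ (both read off from Corollary \ref{ReLU represents idn}) into $(a.3)$ and uses $L-L_i\le L-1$ together with $L+3\le 4L$.

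The main obstacle is the bookkeeping in $(a.3)$ and $(b.5)$: one must track precisely which layers get fused, the fact that the $\pm\Eye$ blocks double the connectivity of exactly one matrix and one bias vector on each side, and the layerwise contribution of the padding networks $\phi^{\Id}_{d, m}$; everything else is mechanical once the two concatenation identities and the equal-depth block-diagonal picture are in place.
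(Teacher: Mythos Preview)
Your approach is essentially the paper's: write out $W(\phi^1\odot\phi^2)$ explicitly, observe that the two fused layers double in $\Norm{\cdot}_0$ while all other layers are untouched, and for parallelization reduce to the equal-depth block-diagonal case via the padding built into Definition~\ref{def: paralelization}. The paper only spells out $(a.1)$, $(a.3)$ and $(b.4)$ and defers the rest to \cite{Elbr_chter_2021}, so your outline in fact covers more than the paper does, by the same mechanism.

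One numerical slip: the bound $M(\phi^{\Id}_{d,m})\le(m+2)d$ you quote for $(b.5)$ is off. The hidden layers of $\phi^{\Id}_{d,m}$ live in $\R^{2d}$, so the $m-2$ intermediate matrices are $\Eye_{2d}$ (each contributing $2d$ nonzeros), not $\Eye_d$; this gives $M(\phi^{\Id}_{d,m})=2md$ for $m\ge 2$, which exceeds $(m+2)d$ once $m\ge 3$. The fix is painless: plugging $M(\phi^{\Id}_{d,m})\le 2md$ and $M_1(\phi^{\Id}_{d,m})\le 2d$ into your $(a.3)$ estimate yields $M(\widetilde\phi^i)\le 2M(\phi^i)+2(L-L_i+1)\dimout\phi^i\le 2M(\phi^i)+2L\dimout\phi^i$, which after summing is actually \emph{stronger} than the stated $(b.5)$.
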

\begin{proof}
    We only prove $(a.1)$, $(a.3)$ and $(b.4)$ as an example since all the others can be proved with similar techniques and their proof can be found at \cite{Elbr_chter_2021}.
    \begin{itemize}
        \item[$(a.1)$] Using Definition \ref{sparse concatenation}, it suffice to show
        \[
            R(\phi^1 \bullet \phi^2) = R(\phi^1) \circ R(\phi^2)
        \]
        with $\phi^i = ((T_1^i, \alpha_1^i), \dots, (T_{L_i}^i, \alpha_{L_i}^i))$ for $i = 1, 2$. By Definition \ref{NN realization}, 
        \[
            R(\phi^1) \circ R(\phi^2) = (\alpha_{L_1}^1 \circ T_{L_1}^1 \circ \dots \circ \alpha_1^1 \circ T_1^1) \circ (\alpha_{L_2}^2 \circ T_{L_2}^2 \circ \dots \circ \alpha_1^2 \circ T_1^2).
        \]
        Since $\phi_2$ is strict NN, $\alpha_{L_2} = \Id_{\dimout \phi^2}$ and so
        \[
            R(\phi^1) \circ R(\phi^2) = \alpha_{L_1}^1 \circ T_{L_1}^1 \circ \dots \circ \alpha_1^1 \circ T_1^1 \circ T_{L_2}^2 \circ \dots \circ \alpha_1^2 \circ T_1^2 = R(\phi^1 \bullet \phi^2).
        \]
        following Definition \ref{concatenation def}.
        
        \item [$(a.3)$] Let $\phi^1, \phi^2$ be two strict $\ReLU$-NNs such that
        \[
        W(\phi^1) = ((A^1_1, b_1^1), \dots, (A^1_{L_1}, b^1_{L_1}))
        \]
        and
        \[
        W(\phi^2) = ((A^2_1, b_1^2), \dots, (A^2_{L_2}, b^2_{L_2})).
        \]
        We have then
        \begin{align*}
            W(\phi^1 \odot \phi^2) =& ((A_1^2, b_1^2), \dots, (A_{L_2-1}^2, b^2_{L_2-1}), \left (\begin{bmatrix*} \Eye_n\\ -\Eye_n \end{bmatrix*} A_{L_2}^2, \begin{bmatrix*} \Eye_n\\ -\Eye_n \end{bmatrix*} b_{L_2}^2 + 0 \right ), \\
            &\left(A_1^1 \begin{bmatrix*} \Eye_n & -\Eye_n \end{bmatrix*}, A_1^1 0 + b_1^1 \right ), (A_2^1, b_2^1), \dots, (A_{L_1}^1, b_{L_1}^1))\\
            =&\left ( (A_1^2, b_1^2), \dots, \left ( \begin{bmatrix*} A_{L_2}^2\\ -A_{L_2}^2 \end{bmatrix*}, \begin{bmatrix*} b_{L_2}^2\\ -b_{L_2}^2 \end{bmatrix*} \right ) \left ( \begin{bmatrix*} A_1^1 & -A_1^1 \end{bmatrix*}, b_1^1 \right ), \dots, (A_{L_1}^1, b_{L_1}^1) \right )
        \end{align*}
        where $n = \dimin\phi^1 = \dimout\phi^2$. Since
        \begin{align*}
            M_{L_2}(\phi^1\odot\phi^2) =& 2(\Norm{A_{L_2}^2}+\Norm{b_{L_2}^2}) = 2M_{L_2}(\phi^2),\\ M_{L_2+1}(\phi^1\odot\phi^2) =& 2\Norm{A_1^1}_0+\Norm{b_1^1}_0 \leq 2 M_{1}(\phi^1),
        \end{align*}
        we conclude
        \[
            M(\phi^1\odot\phi^2) \leq M(\phi^1)+M(\phi^2)+M_{L_2}(\phi^2)+M_{1}(\phi^1).
        \]
        \item[$(b.4)$] Let $\phi^1,\dots,\phi^k$ be $k$ strict $\ReLU$-NNs and $L \coloneqq\max_{i=1,\dots,k} L(\phi^i)$.
        \begin{itemize}
            \item[$(b.4.1)$] If $d_j \coloneqq L - L(\phi^j)> 0$, we set
        \[
            W(\phi^{\Id}_{\dimout \phi^j, d_j} \odot \phi^j) = ((A_1^j, b_1^j), \dots, (A_{L}^j, b_{L}^j))
        \]
        and by $(a.4)$,
        \begin{equation}\label{1 bound lemma bounds para concat}
            \begin{split}
                M_L(\phi^{\Id}_{\dimout\phi^j,d_j}\odot\phi^j) = M_{d_j}(\phi^{\Id}_{\dimout\phi^j,d_j}) =& \begin{cases}
                    2 \dimout\phi^j & \txt{ if } d_j > 1\\
                    \dimout\phi^j & \txt{ if } d_j = 1
                \end{cases}\\
                \leq& 2\dimout \phi^j
            \end{split}
        \end{equation}

        \item[$(b.4.2)$] When $d_j = 0$, we set
        \[
            W(\phi^j) = ((A_1^j,b_1^j),\dots,(A_L^j,b_L^j))
        \]
        which, naturally, satisfies
        \begin{equation}\label{2 bound lemma bounds para concat}
            M_L(\phi^j) = \Norm{A_L^j}_0 + \Norm{b_L^j}_0.
        \end{equation}
        \end{itemize}
        
        It is then immediate by \eqref{1 bound lemma bounds para concat} and \eqref{2 bound lemma bounds para concat} that
        \[
            \Norm{A_L^j}_0 + \Norm{b_L^j}_0 \leq \max\{2\dimout\phi^j, M_{L(\phi^i)}(\phi^i)\}
        \]
        and by definition of the parallelization, we conclude
        \begin{align*}
            M_L(\phi^1,\dots,\phi^k) =& \sum_{i=1}^k \Norm{A_L^i}_0 + \Norm{b_L^i}_0\\
            \leq& \sum_{i = 1}^k \max\{2\dimout\phi^i, M_{L(\phi^i)}(\phi^i)\}.
        \end{align*}
    \end{itemize}
\end{proof}

\section{Additional Definitions of Neural Network in the Literature.} \label{sec: comments and other results chap 0}

    There are other approaches that are close to the ideas of classic NNs.

    \begin{enumerate}
    \item A Convolutional Neural Networks (CNN) is a NN giving the matrices a more restrictive structure which we do not consider in this thesis for a reason of length. We take the definition given in \cite{ZHOU2020787}. Fix some $s \in \N$ and $w = (w_0, w_1, \dots, w_s) \in \R^{s + 1}$. The discrete convolution of $w$ and another vector $v = (v_1, \dots, v_d) \in \R^d$ is defined as
    \[
    w * v = \left ( \sum_{k = 1}^d w_{1 - k} v_k, \sum_{k = 1}^d w_{2 - k} v_k, \dots, \sum_{k = 1}^d w_{s + d - k} v_k \right ) \in \R^{s + d}
    \]
    where $w_k = 0$ if $k \not \in \set{0, 1, \dots, s}$.
    Then, if we define the Toeplitz matrix
    \[
    \tau = \begin{bmatrix*} w_0 & 0 & \cdots & 0 & 0 \\
    w_1 & w_0 & \cdots & 0 & 0 \\
    \vdots & \vdots & \ddots & \vdots & \vdots \\
    w_s & w_{s - 1} & \cdots & w_0 & 0 \\
    0 & w_s & \cdots & w_1 & w_0 \\
    \vdots & \vdots & \ddots & \vdots & \vdots \\
    0 & 0 & \cdots & w_s & w_{s - 1} \\
    0 & 0 & \cdots & 0 & w_s
    \end{bmatrix*} \in \R^{(d + s) \times d}
    \]
    then $\tau v = w * v$.
    We also can define the downsampling matrix: given $d, m \in \N$, we define
    \[
    D = (d_j \delta_{i, j})_{\substack{i = 1, \dots, \Floor{d / m} \\ j = 1, \dots, d}}
    \]
    where $d_j = 0$ if $j / m \not \in \N$ and $d_j = 1$ if $j / m \in \N$. A CNN is then a NN where the affine functions and activation functions $(\alpha, T)$ are either $Tx = \tau x + b$ or $\alpha = \Id$ and $Tx = Dx$ where $\tau$ is a Toeplitz matrix, $b$ is a real vector and $D$ is a downsampling matrix.
    
    \item Deep Operator Nets, or DeepONets for short, are operators between spaces of functions and are introduced in \cite{lu2019deeponet}. As in a NN, DeepONets are characterized by some parameters: for a fixed set $X$, for every function $u: X \longrightarrow \R$ and $y \in \R^d$, a DeepONets operator is defined as
    \[
    G(u)(y) = \inner{(\inner{c^1, \sigma(\xi^1 \textbf{u} + \theta^1)}, \dots, \inner{c^p, \sigma(\xi^p \textbf{u} + \theta^p))}, \sigma(W y + \tau)}
    \]
    with $c^1, \dots, c^p, \theta^1, \dots, \theta^p \in \R^n$, $\xi^1, \dots, \xi^p \in \R^{n \times m}$, $ \textbf{u} = (u(x_1), \dots, u(x_m)) \in \R^m$, $W \in \R^{p \times d}$, $\tau \in \R^p, x_1, \dots, x_m \in X$ and $\sigma: \R \longrightarrow \R$ is an activation function applied to every component.

    \item Neural Operator are presented in \cite{kovachki2023neural}. Imitating the layers of a classic NN, the layer of a Neural Operator is defined for every function $u: \R^d \longrightarrow \R $ as
    \[
    u \longmapsto \varrho \left ( W(\; \cdot \;)u + \int K(\; \cdot \;, y, u(\; \cdot \;), u(y)) u(y) dy + b \right )
    \]
    where the kernel $K: \R^d \times \R^d \times \R \times \R$ is a parameter, $W, b:\R^d \longrightarrow \R$ are functions and $\varrho$ is an activation function, giving the Neural Operators its learning capability. A Neural Operator is then a composition of these layers. 
    \end{enumerate}
    However, due to space, we are not considering such structures.
\parindent=0em
\chapter[Deep Neural Networks and PDEs]{Deep Neural Networks for the numerical resolution of PDEs} \label{chap: Mult NN}

The paper \cite{kutyniok2022theoretical} upper bounds on the number of parameters required to approximate the solution of elliptic PDEs with Neural Networks. Even though the paper \cite{kutyniok2022theoretical} considers parametric problems, the main points appear without that extension and thus we restraint ourselves to elliptic equations. The idea behind is to build a Neural Network with a controlled size to apply the Galerkin method. Its construction is based on the approximation of the square function by a Neural Network, which leads to an approximation for the product of two scalars, the product of two matrices and then the inverse of matrices.
The idea of said construction was brought by \cite{kutyniok2022theoretical}, but here we improved the architectural bounds of the Neural Networks for multiplying scalars (see Remark \ref{better mult}) and inverting matrices, as well as shorten the proof of their properties (see Remark \ref{better inversion NN}).

\section{Galerkin Method.}

The Galerkin method is a numerical approach to solve some PDEs. It relies on the fact that some PDEs can be reduced to find some element $u$ in an infinite-dimensional separable Hilbert space $H$ which satisfies that
\begin{align}\label{pde}
    b(u,v) = f(v), \txt{ for all } v \in H,
\end{align}
where $b$ is a bilinear form and $f \in H^*$, where $H^*$ denotes the dual space of $H$. Given the problem \eqref{pde}, one can choose from a Hilbert base $\seq{\varphi_k}{k\in\N}\subset H$ a discretization, a finite collection of elements $\seq{\varphi_{k_i}}{i=1}^d$ from said base, hoping that solving the problem on the finite-dimensional space $V = \Span \lr{\{}{\varphi_{k_1},\ldots,\varphi_{k_d}}{\}}$ gives us a reasonable approximation of the actual solution as explained in Chapter 3 of \cite{Quarteroni2016}. The finite-dimensional case is relatively easy to solve since it only involves linear algebra:
\begin{align*}
b(u, v) = f(v) \txt{ for all } v\in V &\iff b(u, \varphi_{k_i}) = f(\varphi_{k_i}) \txt{ for } i=1,\ldots, d\\
&\iff B \mu = F,
\end{align*}
where 
\begin{align*}
    B =&(b(\varphi_{k_i}, \varphi_{k_j}))_{i,j}\in\R^{d\times d},\\
    u =& \sum\limits_{i=1}^d \mu_i \varphi_{k_i},\\
    \mu =& (\mu_1,\ldots, \mu_d)\in\R^d,\\
    \txt{and } F =& (f(\varphi_{k_1}),\ldots,f(\varphi_{k_d}))\in\R^d.
\end{align*}
Therefore, the finite-dimensional problem is solved by finding $\mu$.

We guarantee the existence and uniqueness of solutions when the bilinear form is symmetric, continuous and coercive, that is, when it exists some constants $C_{\txt{cont}}, C_{\txt{coer}}>0$ such that, for every $u, v\in H$,
\begin{align*}
    b(u,v) = b(v,u), && \Abs{b(u,v)}\leq C_{\txt{cont}}\Norm{u}_H\Norm{v}_H && \txt{and} && C_{\txt{coer}} \leq \frac{b(u,u)}{\Norm{u}_H^2}.
\end{align*}
Indeed, the Lax-Milgram's Theorem ensures it, see Theorem 1 in Section 6.2.1 of \cite{evans2022partial}. These conditions also tells us that $B$ is positive definite, symmetric and invertible on a explicit way.

\begin{proposition}\label{B invertible}
    Let $b:H\times H\longrightarrow \R$ be a symmetric, continuous and coecive bilinear form on a Hilbert space $H$. If $B = (b(\varphi_i,\varphi_j))_{i,j=1}^d\in\R^{d\times d}$ where $\varphi_1,\dots,\varphi_d$ are orthonormal, then $B$ is invertible and for $\alpha>0$ sufficiently small,
    \begin{equation}\label{B-1}
        B^{-1} = \alpha \sum_{k=0}^\infty \left(\Id_d-\alpha B\right)^k.
    \end{equation}
\end{proposition}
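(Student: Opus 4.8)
The plan is to show that $B$ is positive definite and symmetric, from which invertibility follows, and then to establish the Neumann-type series \eqref{B-1} by proving that $\Norm{\Id_d - \alpha B} < 1$ in the spectral (operator $2$-) norm for $\alpha$ small enough. First I would record that $B$ is symmetric: this is immediate from $b(u,v) = b(v,u)$, since $B_{ij} = b(\varphi_i, \varphi_j) = b(\varphi_j, \varphi_i) = B_{ji}$. Next, for any $x = (x_1, \dots, x_d) \in \R^d$, setting $u = \sum_{i=1}^d x_i \varphi_i \in H$ and using bilinearity together with coercivity and the orthonormality of the $\varphi_i$ (so that $\Norm{u}_H^2 = \sum_i x_i^2 = \Abs{x}^2$), I get
\[
x^\top B x = \sum_{i,j=1}^d x_i x_j\, b(\varphi_i, \varphi_j) = b(u,u) \geq C_{\txt{coer}} \Norm{u}_H^2 = C_{\txt{coer}} \Abs{x}^2.
\]
Hence $B$ is positive definite, and in particular invertible, since $Bx = 0$ would force $\Abs{x}^2 \le C_{\txt{coer}}^{-1} x^\top B x = 0$.

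Then I would control the spectrum of $B$. Symmetry gives real eigenvalues $\lambda_1, \dots, \lambda_d$; positive definiteness gives $\lambda_k \geq C_{\txt{coer}} > 0$ for all $k$; and continuity gives an upper bound, since $x^\top B x = b(u,u) \leq C_{\txt{cont}} \Norm{u}_H^2 = C_{\txt{cont}} \Abs{x}^2$, so $\lambda_k \leq C_{\txt{cont}}$. Therefore the eigenvalues of $\Id_d - \alpha B$ are $1 - \alpha \lambda_k$, and for $0 < \alpha < 2 / C_{\txt{cont}}$ (which forces $0 < \alpha \lambda_k < 2$) we have $\Abs{1 - \alpha \lambda_k} < 1$; since $\Id_d - \alpha B$ is symmetric, its spectral norm equals $\max_k \Abs{1 - \alpha\lambda_k} < 1$. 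Call this quantity $q < 1$.

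Finally, with $q = \Norm{\Id_d - \alpha B} < 1$ I invoke the standard Neumann series argument: the partial sums $S_N = \alpha \sum_{k=0}^N (\Id_d - \alpha B)^k$ form a Cauchy sequence in $\R^{d\times d}$ because $\Norm{(\Id_d - \alpha B)^k} \leq q^k$ and $\sum_k q^k < \infty$, so the series converges; and the telescoping identity $\alpha B \sum_{k=0}^N (\Id_d - \alpha B)^k = \Id_d - (\Id_d - \alpha B)^{N+1} \to \Id_d$ as $N \to \infty$ (the remainder having norm at most $q^{N+1} \to 0$) shows the limit is a right, hence two-sided, inverse of $\alpha B$, i.e. $\sum_{k=0}^\infty (\Id_d - \alpha B)^k = (\alpha B)^{-1}$, which is \eqref{B-1}.

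I do not anticipate a genuine obstacle here; the only point requiring mild care is the phrase ``for $\alpha > 0$ sufficiently small'', which should be made precise as $\alpha < 2/C_{\txt{cont}}$ (indeed any $\alpha \le 1/C_{\txt{cont}}$ works cleanly and is the choice one typically wants downstream), together with being explicit that all norm estimates are in the spectral norm so that the symmetry of $\Id_d - \alpha B$ can be used to pass from eigenvalue bounds to operator-norm bounds. Everything else is routine linear algebra and the convergence of a geometric series.
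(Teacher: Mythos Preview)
Your proposal is correct and follows essentially the same route as the paper: bound the eigenvalues of $B$ via $x^\top B x = b(u,u)$ using coercivity, continuity, and orthonormality, deduce $\Norm{\Id_d - \alpha B}_2 < 1$ for suitable $\alpha$, and then apply the Neumann series. The only cosmetic differences are that the paper packages the Neumann series into a separate lemma and aims for $1-\alpha\lambda \in (0,1)$ rather than your wider (and also valid) range $|1-\alpha\lambda|<1$.
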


\begin{remark}
    As seen later on this chapter (Theorem \ref{matrix inversion NN}), even the statement is true for small $\alpha$, we prefer that is as big as possible. 
\end{remark}

The proof of this proposition relies on the spectral matrix norm.
\begin{deff}
    Given a matrix $A\in\R^{d\times d}$, its 2 norm is
    \[
    \Norm{A}_2 \coloneqq \sup_{x\neq 0}\frac{\Abs{Ax}_2}{\Abs{x}_2}.
    \]
\end{deff}
As it can be seen in Proposition 27 and Theorem 127 of \cite{layton2014numerical}, if $A\in\R^{d\times d}$ is a symmetric matrix, its eigenvalues are real, it is diagonalizable and its spectral norm is in fact related to its spectral decomposition: 
\begin{equation}\label{norm 2}
    \Norm{A}_2 = \max\{\Abs{\lambda}\;|\; \lambda \txt{ eigenvalue of } A\}.
\end{equation}

\begin{lemma}\label{Neumann}
    Given a matrix $A\in\R^{d\times d}$ such that $\Norm{A}_2<1$ then
    \begin{equation}\label{eq Neumann}
        (\Eye_d - A)^{-1} = \sum_{k=0}^\infty A^k.
    \end{equation}
    and for any $N\in\N$,
    \begin{equation}\label{neumann error}
        \Norm{(\Eye_d-A)^{-1} - \sum_{k=0}^N A^k}_2 \leq \frac{\Norm{A}_2^{N+1}}{1-\Norm{A}_2}.
    \end{equation}
\end{lemma}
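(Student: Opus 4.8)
The plan is to reduce everything to the classical Neumann series identity via the spectral norm being submultiplicative, and then extract the error estimate from the tail of a geometric series. First I would recall that $\Normm_2$ is a submultiplicative matrix norm, so $\Norm{A^k}_2 \leq \Norm{A}_2^k$ for every $k \in \N$; since $\Norm{A}_2 < 1$, the series $\sum_{k=0}^\infty \Norm{A}_2^k$ converges, hence $\sum_{k=0}^\infty A^k$ is absolutely convergent in the (complete) space $\R^{d \times d}$ and therefore convergent. Call its sum $S$.

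Next I would establish \eqref{eq Neumann} by the telescoping trick: for the partial sums $S_N = \sum_{k=0}^N A^k$ one has the algebraic identity $(\Eye_d - A)S_N = S_N(\Eye_d - A) = \Eye_d - A^{N+1}$. Letting $N \to \infty$ and using $\Norm{A^{N+1}}_2 \leq \Norm{A}_2^{N+1} \to 0$, the right-hand side tends to $\Eye_d$, so $(\Eye_d - A)S = S(\Eye_d - A) = \Eye_d$, which shows $\Eye_d - A$ is invertible with inverse $S$. This proves \eqref{eq Neumann}.

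For the error bound \eqref{neumann error}, I would write $(\Eye_d - A)^{-1} - S_N = \sum_{k=N+1}^\infty A^k$ and estimate directly using the triangle inequality and submultiplicativity:
\[
\Norm{(\Eye_d-A)^{-1} - \sum_{k=0}^N A^k}_2 \leq \sum_{k=N+1}^\infty \Norm{A}_2^k = \frac{\Norm{A}_2^{N+1}}{1-\Norm{A}_2},
\]
where the last equality is the formula for the tail of a convergent geometric series with ratio $\Norm{A}_2 < 1$.

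There is no real obstacle here; the only point requiring a little care is making sure the ambient normed space $\R^{d \times d}$ is complete so that absolute convergence implies convergence (this is immediate, being finite-dimensional) and that the algebraic telescoping identity is justified before passing to the limit. One could alternatively diagonalize $A$ using \eqref{norm 2} and the fact (quoted from \cite{layton2014numerical}) that symmetric matrices are orthogonally diagonalizable, reducing to the scalar geometric series on each eigenvalue, but since the lemma is stated for a general (not necessarily symmetric) matrix $A$ with $\Norm{A}_2 < 1$, the submultiplicativity argument above is the cleaner route and is the one I would write up.
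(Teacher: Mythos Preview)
Your proposal is correct and follows essentially the same approach as the paper: convergence of $\sum A^k$ via submultiplicativity and completeness of $\R^{d\times d}$, the telescoping identity $(\Eye_d-A)S_N=\Eye_d-A^{N+1}$ passed to the limit, and the geometric tail bound for \eqref{neumann error}. The only cosmetic differences are that the paper verifies convergence via the Cauchy criterion rather than absolute convergence, and factors the tail as $A^{N+1}\sum_{k\geq 0}A^k$ before bounding, whereas you bound termwise directly.
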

\begin{proof}
    The convergence of this series is guaranteed by the completeness of $(\R^{d\times d},\Norm{\;\cdot\;}_2)$: let $N,M\in\N$ with $N<M$,
    \[
        \Norm{\sum_{k=0}^NA^k-\sum_{k=0}^M A^k}_2 \leq \sum_{k = N+1}^M \Norm{A}_2^k = \Norm{A}_2^{N+1}\frac{1-\Norm{A}_2^{M-N}}{1-\Norm{A}_2} \xrightarrow[N,M\rightarrow \infty]{} 0.
    \]
    Since $A^k\rightarrow 0\in\R^{d\times d}$ as $k\rightarrow\infty$ and
    \[
    (\Eye_d - A)\sum_{k=0}^N A^k = \sum_{k=0}^N A^k(\Eye_d - A) = \Eye_d - A^{N+1}
    \]
    where $\Id_d \coloneqq (\delta_{i,j})_{i,j=1}^d\in\R^{d\times d}$, by the continuity of the matrix multiplication, we deduce \eqref{eq Neumann}. 
    
    Given $N\in\N$, using the triangular inequality and the submultiplicativity of the spectral norm, we get
    \[
    \begin{split}
        \Norm{(\Eye_d-A)^{-1} - \sum_{k=0}^N A^k}_2 &= \Norm{\sum_{k=N+1}^\infty A^k} = \Norm{A^{N+1}\sum_{k=0}^\infty A^k}_2\\
        &\leq \Norm{A}^{N+1}_2\sum_{k=0}^\infty \Norm{A}^k_2=\frac{\Norm{A}^{N+1}_2}{1-\Norm{A}_2}
    \end{split}
    \]
    which proves \eqref{neumann error}.
\end{proof}

\begin{proof}[Proof of Proposition \ref{B invertible}]
    If we find $\alpha\in\R$ such that $\Norm{\Eye_d-\alpha B}_2<1$ then we can apply Lemma \ref{Neumann}:
    \[
    B^{-1} = \alpha (\Eye_d -(\Eye_d-\alpha B))^{-1} = \alpha \sum_{k=0}^\infty(\Eye_d-\alpha B)^k.
    \]
    By hypothesis, $B$ is symmetric and therefore $\Id_d - \alpha B$ too for any $\alpha\in\R$ so, we can use \eqref{norm 2} and the fact that its eigenvalues are real. Let $\lambda\in\bb{R}$ be an eigenvalue of $B$ and $x = (x_1,\dots,x_d)\in\bb{R}^d$ its associated eigenvector such that $\Norm{x}_2 = 1$ . It is easy to check that
    \[
        \lambda = x^TBx = \sum_{i,j=1}^d x_ix_j b(\varphi_i,\varphi_j) = b\left(\sum_{i=1}^d x_i \varphi_i,\sum_{i=1}^d x_i \varphi_i\right)\in[\Ccoer,\Ccont],
    \]
    where $\Ccont$ and $\Ccoer$ are the corresponding continuous and coercive constant of the bilinear form $b$. So, for any eigenvalue $\lambda\in[\Ccoer,\Ccont]$ of $B$ and $\alpha\in (0,1/\Ccoer)$, the eigenvalue $1-\alpha\lambda$ of $\Eye_d-\alpha B$ is between $0$ and $1$ implying that $\Norm{\Eye_d-\alpha B}_2<1$.
\end{proof}
Proposition \ref{B invertible} motivates us to define the set
\begin{equation}\label{def invertible set}
    \mathcal{I}_d(\alpha, \delta) = \lr{\{}{A\in\R^{d\times d} \;\middle |\; \Norm{\Eye_d-\alpha A}_2 \leq \delta}\}.
\end{equation}
Even thought all the matrix in $\mathcal{I}_d(\alpha, \delta)$ are invertible if $\delta \in [0, 1)$, not all invertible matrices are in $\bigcup\limits_{\alpha \in \R} \bigcup\limits_{\delta \in [0, 1)} \mathcal{I}_d(\alpha, \delta)$. For example $\begin{bmatrix*}
    0 & 1 \\ 1 & 0
\end{bmatrix*}$:
\[
\Norm{\begin{bmatrix*} -\alpha & 1 \\ 1 & -\alpha \end{bmatrix*}}_2 = 1 + \Abs{\alpha} \geq 1.
\]

\begin{remark}\label{relation delta alpha}
    If we restrain ourselves to a symmetric matrix $B \in \R^{d \times d}$ with real positive eigenvalues, like in Propopsition \ref{B invertible}, the relation between $\alpha$ and the spectral norm of $\Eye_d - \alpha B$ is clear. Indeed, if $\lambda$ and $\Lambda$ are the smallest and biggest eigenvalue of $B$ respectively then, using \eqref{norm 2},
    \[
    \Norm{\Eye_d - \alpha B}_2 = 1 - \alpha \lambda
    \]
    for all $\alpha \in [0, 1 / \Lambda)$. This relation gives us an interval to work. As we later will see, when approximating the inverse of such a matrix, it is convenient to choose $\alpha$ as big as possible.
\end{remark}

If we can approximate $B$, we will see on the next sections that we can approximate $B^{-1}$ using a Neural Network and consequently getting a Galerkin solution of the PDE.

\section{Neural Network construction.}
\subsection{Product Neural Network.}

We can now begin with the approximation results.
\begin{proposition}[Square NN] \label{sq NN}
    If we call
    \begin{align*}
        g(x) = \min\{2x,2-2x\}, && g^m = \overbrace{g\circ\ldots\circ g}^m&& \text{and} && f_m(x) = x - \sum_{k=1}^m \frac{g^k(x)}{2^{2k}},
    \end{align*}
    then
    \[
    \sup_{x\in[0,1]}\Abs{x^2-f_m(x)} = 2^{-2(m+1)}.
    \]
\end{proposition}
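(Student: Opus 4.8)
The plan is to recognise $f_m$ as the continuous piecewise-linear interpolant of $\phi(x)=x^{2}$ on the uniform dyadic grid $\{j\,2^{-m}:0\le j\le 2^{m}\}$; call this interpolant $P_m$. Once $f_m=P_m$ is established, the statement becomes the classical (and short) computation of the error of piecewise-linear interpolation of a quadratic. So the proof splits into two parts: (i) $f_m=P_m$, and (ii) $\sup_{x\in[0,1]}|\phi(x)-P_m(x)|=2^{-2(m+1)}$.

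For (i) I would prove the refinement identity $P_{m-1}-P_m=2^{-2m}g^{m}$ for every $m\ge 1$ and then telescope: since $P_0(x)=x$ is the line through $(0,0)$ and $(1,1)$, summing gives $P_m(x)=x-\sum_{k=1}^{m}g^{k}(x)/2^{2k}=f_m(x)$. To prove the refinement identity, first record the elementary structure of the tent-map iterates: from $g(x)=2x$ on $[0,\tfrac12]$ and $g(x)=2-2x$ on $[\tfrac12,1]$ one gets $g^{m}(x)=g^{m-1}(2x)$ on $[0,\tfrac12]$ and $g^{m}(x)=g^{m-1}(2-2x)$ on $[\tfrac12,1]$; an induction on $m$ then shows that $g^{m}$ is affine on each interval $[j2^{-m},(j+1)2^{-m}]$ and that $g^{m}(j2^{-m})$ equals $0$ for $j$ even and $1$ for $j$ odd. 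Granting this, both $P_{m-1}-P_m$ and $2^{-2m}g^{m}$ are continuous and affine on every $[j2^{-m},(j+1)2^{-m}]$, so it suffices to check they agree at the grid points. At an even node $j=2i$ both $P_{m-1}$ and $P_m$ take the value $\phi(i2^{-(m-1)})$, so the difference is $0$; at an odd node $j=2i+1$, which is the midpoint of $[2i2^{-m},(2i+2)2^{-m}]$, a one-line computation gives $P_{m-1}(x)-P_m(x)=\tfrac12\bigl(\phi(2i2^{-m})+\phi((2i+2)2^{-m})\bigr)-\phi((2i+1)2^{-m})=2^{-2m}$, matching $2^{-2m}g^{m}((2i+1)2^{-m})=2^{-2m}$.

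For (ii), convexity of $\phi$ gives $P_m\ge\phi$ on $[0,1]$ (a chord lies above the graph), hence $|\phi-P_m|=P_m-\phi$. On a generic subinterval $[a,a+2^{-m}]$ the function $P_m-\phi$ is a downward parabola vanishing at both endpoints, so its maximum there is attained at the midpoint and equals $\tfrac14(2^{-m})^{2}=2^{-2(m+1)}$, independently of $a$. Taking the supremum over the $2^{m}$ subintervals yields exactly $2^{-2(m+1)}$, and shows the value is attained, e.g.\ at $x=2^{-m-1}$.

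I expect the only genuine (and modest) obstacle to be the induction establishing the combinatorial description of $g^{m}$ on the dyadic grid: that its breakpoints lie among the multiples of $2^{-m}$ and that its grid values alternate $0,1,0,1,\dots$; once this is in hand the telescoping and the final error computation are immediate. An alternative that sidesteps the explicit refinement identity is a single induction on $m$ proving simultaneously that $f_m$ is affine on each $[j2^{-m},(j+1)2^{-m}]$ (clear, since every $g^{k}$ with $k\le m$ is) and that $f_m(j2^{-m})=(j2^{-m})^{2}$; but this still rests on the same grid-value computation for the $g^{k}$, so it does not really avoid the obstacle.
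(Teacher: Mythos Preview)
Your proposal is correct and follows essentially the same route as the paper: both identify $f_m$ with the piecewise-linear interpolant of $x^2$ on the dyadic grid via the refinement identity $P_{m-1}-P_m=2^{-2m}g^m$ (the paper writes it as $h_m-h_{m+1}=g^{m+1}/2^{2(m+1)}$) and a telescoping sum, then compute the interpolation error. The only minor difference is bookkeeping for that identity: the paper derives an explicit formula $g^m(x)=g(2^{m-1}x-\lfloor 2^{m-1}x\rfloor)$ and checks the identity by case analysis on $2^m x-\lfloor 2^m x\rfloor$, whereas you verify it by matching grid values of two piecewise-affine functions; your version is arguably cleaner but not a different idea.
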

The functions $g^m$ are commonly named sawtooth, see Propostion \ref{plot gm}.
We detail and split the proof of Proposition 2 of \cite{YAROTSKY2017103} into two lemmas:

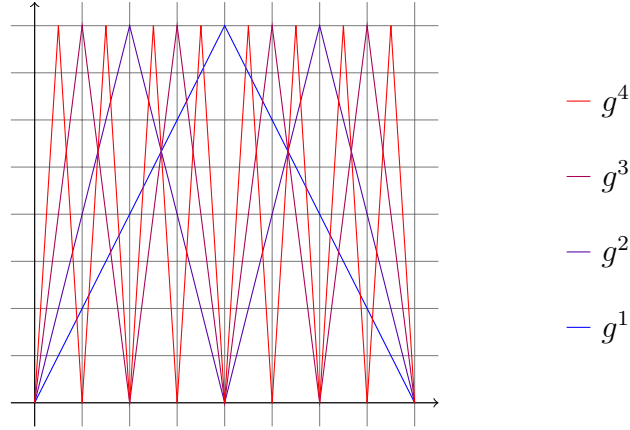
\begin{figure}
    \centering
    \begin{tikzpicture}[scale = 5]
        \draw[step = .125, color = gray] (-.0625, -.0625) grid (1.0625, 1.0625);
        \draw[->] (-.0625, 0) -- (1.0625, 0);
        \draw[->] (0, -.0625) -- (0, 1.0625);
        \foreach \n [count = \c, evaluate = \c as \shade using 100*(\c-1)/3, evaluate = \n as \neval using 2^\n] in {0,...,3}
            {
            \draw[color = red!\shade!blue] (0, 0) 
                \foreach \j in {1,...,\neval}
                    {--++ (.5*2^-\n, 1) --++ (.5*2^-\n, -1)};
            \draw[color = red!\shade!blue] (1.4, .2 * \c) --++ (0.0625, 0);
            }
        \foreach \n in {1,...,4}
            {
            \node[right] at (1.4625, .2 * \n) {$g^\n$};
            }
    \end{tikzpicture}
    \label{plot gm}
    \caption{Plot of $g^m$ for $m = 1,2,3,4$.}
\end{figure}

\begin{lemma}\label{sq NN lemma 1}
    Let
    \begin{align*}
        g(x) = \min\{2x, 2-2x\}    &&  \txt{ and } && g^m = \overbrace{g\circ\ldots\circ g}^m
    \end{align*}
    for $x\in[0,1]$ and $m\in\N$. Then
    \[
    g^m(x) = g(2^{m-1}x-\Floor{2^{m-1}x}).
    \]
\end{lemma}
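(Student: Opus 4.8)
The statement is that the $m$-fold composition $g^m$ of the tent map $g(x)=\min\{2x,2-2x\}$ satisfies $g^m(x) = g(2^{m-1}x - \lfloor 2^{m-1}x\rfloor)$ on $[0,1]$. I would prove this by induction on $m$. The base case $m=1$ reads $g(x) = g(x - \lfloor x\rfloor)$, which holds on $[0,1]$ since $\lfloor x\rfloor = 0$ for $x\in[0,1)$ and one checks the endpoint $x=1$ separately (both sides vanish).

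For the inductive step, the cleanest route is to first establish the key self-similarity identity $g^{m+1}(x) = g^m(2x) = g^m(2x-1)$ appropriately on the two halves of $[0,1]$ — or rather, to use the even simpler observation that $g^m$ is built from $g^{m-1}\circ g$. Actually the most economical approach is: assume $g^m(x) = g(2^{m-1}x - \lfloor 2^{m-1}x\rfloor)$ and compute $g^{m+1}(x) = g^m(g(x))$... no, composition order matters here, so I would instead write $g^{m+1} = g^m \circ g$ and use the inductive hypothesis on the outer $g^m$. Hmm, that forces me to understand $g^{m-1}$ applied to things like $2^{m-1}g(x)$. The genuinely clean path is to prove the auxiliary claim that $g$ restricted to $[0,1/2]$ and to $[1/2,1]$ satisfies $g(x)=2x - 2\lfloor 2x\rfloor$ when $2x\le 1$ and $g(x) = 2 - 2x$, i.e. $g(x) = g_{\text{periodic}}$ behaviour, and then show by induction that $g^m(x) = h_m(x)$ where $h_m(x) := g(2^{m-1}x - \lfloor 2^{m-1}x\rfloor)$, using that $h_{m+1}(x) = h_m(g(x))$: on $[0,1/2]$ one has $g(x) = 2x$ so $h_m(g(x)) = g(2^m x - \lfloor 2^m x\rfloor)$, and on $[1/2,1]$ one has $g(x)=2-2x=2(1-x)$ and $2^{m-1}(2-2x) = 2^m - 2^m x$, so $2^{m-1}g(x) - \lfloor 2^{m-1}g(x)\rfloor = -2^m x - \lfloor -2^m x\rfloor$ which equals $2^m x - \lfloor 2^m x \rfloor$ up to the symmetry $g(1-t)=g(t)$ of the tent; this is exactly what is needed.

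So the concrete steps are: (1) record that $g(1-t) = g(t)$ for all $t\in[0,1]$; (2) record the fractional-part fact that $\{-s\} = 1 - \{s\}$ when $s\notin\mathbb Z$ and handle integer $s$ directly; (3) prove the recursion $h_{m+1} = h_m\circ g$ on $[0,1]$ by splitting into $[0,1/2]$ and $[1/2,1]$ and applying (1)–(2); (4) conclude by induction since $g^{m+1} = g^m\circ g = h_m\circ g = h_{m+1}$, using the base case. Throughout, $x=1$ and the dyadic points $x = k/2^{m-1}$ (where the floor jumps) need a momentary check, but at those points $2^{m-1}x - \lfloor 2^{m-1}x\rfloor = 0$ and $g(0)=0$, which is consistent with $g^m$ vanishing there as well.

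**Main obstacle.** The only real subtlety is bookkeeping at the breakpoints: the formula $g(2^{m-1}x - \lfloor 2^{m-1}x\rfloor)$ is, a priori, discontinuous at each $x = k/2^{m-1}$ because the floor function jumps, whereas $g^m$ is genuinely continuous. One must verify that the apparent discontinuity is removable — i.e. that $g$ evaluated at the jump (which sends the argument from near $1$ back to $0$) takes the same value $0$ from both sides — so the two expressions really do agree everywhere on $[0,1]$ and not merely off a finite set. This is exactly the point where the symmetry $g(1-t)=g(t)$ and $g(0)=g(1)=0$ are doing the work, so I would state those two facts explicitly before starting the induction.
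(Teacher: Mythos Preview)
Your plan is correct and lands on essentially the same proof as the paper: induction on $m$, a two-case split, and the symmetry $g(1-t)=g(t)$ to handle the ``reflected'' half. The only cosmetic difference is the direction in which you peel off one copy of $g$: the paper writes $g^{m+1}=g\circ g^m$, applies the inductive hypothesis to the \emph{inner} factor to get $g\bigl(g(\{2^{m-1}x\})\bigr)$, and then splits on whether $\{2^{m-1}x\}\in[0,1/2)$ or $[1/2,1)$ (using the relations $\lfloor 2^m x\rfloor=2\lfloor 2^{m-1}x\rfloor$ or $2\lfloor 2^{m-1}x\rfloor+1$), whereas you write $g^{m+1}=g^m\circ g$, apply the hypothesis to the \emph{outer} factor via $h_m$, and split on $x\in[0,1/2]$ versus $[1/2,1]$. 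The two case analyses are mirror images of each other; your version is marginally tidier because the split is directly on $x$ rather than on a fractional part, but no new idea is involved.
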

We recall that for any $x\in\R$, $\Floor{x}\in\mathbb{Z}$ denotes the only integer satisfying 
\[\Floor{x}\leq x < \Floor{x}+1.\]
\begin{proof}
    We prove by induction on $m$. The case $m = 1$ is immediate. Suppose that the case $m\in\N$ holds, which implies, for any $x\in[0,1]$,
    \begin{equation}\label{ind step sq NN lemma 1}
        g^{m+1}(x) = g(g^m(x)) = g(g(2^{m-1}x-\Floor{2^{m-1}x})).
    \end{equation}
    We divide the induction step on three subcases.
    \begin{itemize}
        \item If $2^{m-1}x-\Floor{2^{m-1}x}\in[0,1/2)$, then
        \begin{equation}\label{floor entre 0 y .5}
            \begin{split}
                &\Floor{2^{m-1}x} \leq 2^{m-1}x < \Floor{2^{m-1}x}+\frac{1}{2}\\
                \iff &2\Floor{2^{m-1}x}\leq 2^mx < 2\Floor{2^{m-1}x}+1\\
                \iff & 2\Floor{2^{m-1}x} = \Floor{2^mx}
            \end{split}
        \end{equation}
        and
        \begin{equation*}
            g(2^{m-1}x-\Floor{2^{m-1}x}) = 2^mx - 2\Floor{2^{m-1}x} = 2^mx-\Floor{2^mx}.
        \end{equation*}
        Thus, using \eqref{ind step sq NN lemma 1}, we prove the induction step:
        \[
        g^{m+1}(x) = g(2^mx-\Floor{2^{m}x}).
        \]
        \item If $2^{m-1}x-\Floor{2^{m-1}x}\in[1/2,1)$, then
        \begin{equation}\label{floor entre .5 y 1}
            \begin{split}
                &\Floor{2^{m-1}x}+\frac{1}{2} \leq 2^{m-1}x< \Floor{2^{m-1}x}+1\\
                &\iff 2\Floor{2^{m-1}x}\leq 2^mx-1<2\Floor{2^{m-1}x}+1\\
                &\iff 2\Floor{2^{m-1}x} = \Floor{2^mx-1} = \Floor{2^{m}x}-1
            \end{split}
        \end{equation}
        and
        \begin{equation*}
            g(2^{m-1}x-\Floor{2^{m-1}x}) = 2 - (2^mx-2\Floor{2^{m-1}x}) = 1-(2^mx-\Floor{2^mx}).
        \end{equation*}
        Thus, using \eqref{ind step sq NN lemma 1}, we get
        \[
        g^{m+1}(x) = g(1 -\left(2^mx - \Floor{2^{m}x}\right)).
        \]
        It is sufficient to use the fact that $g(t) = g(1-t)$ for all $t\in[0,1]$ to conclude the inductive step.
        \item If $x = 1$, the identity is immediate.
    \end{itemize}
\end{proof}

\begin{lemma}\label{sq NN lemma 2}
    Let $g:[0,1]\longrightarrow\R$ be the function defined at Lemma \ref{1 bound lemma bounds para concat}, $m\in\N$ and
    \begin{equation}\label{linear interpolation x2}
    h_m(x) = \frac{2\Floor{2^mx}+1}{2^m}\left(x-\frac{\Floor{2^mx}}{2^m}\right)+\left(\frac{\Floor{2^mx}}{2^m}\right)^2.
    \end{equation}
    Then
    \begin{equation*} \label{eq: difference of linear interpolation}
    h_m(x)-h_{m+1}(x) = \frac{g^{m+1}(x)}{2^{2(m+1)}}.
    \end{equation*}
\end{lemma}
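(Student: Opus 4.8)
The plan is to localise the identity to a single interval of the dyadic grid of mesh $2^{-(m+1)}$ and then compute both sides directly. First I would apply Lemma \ref{sq NN lemma 1} to rewrite the right-hand side as $g^{m+1}(x) = g\bigl(2^m x - \Floor{2^m x}\bigr)$, turning it into an explicit elementary expression in $x$. The geometric content, which I would keep only as a guide, is that $h_m$ is the piecewise-linear interpolant of $t \mapsto t^2$ at the nodes $j 2^{-m}$ and $h_{m+1}$ the interpolant at the finer nodes $j 2^{-(m+1)}$, so $h_m - h_{m+1}$ is a tent function on each coarse interval whose height equals the midpoint interpolation error of the square over that interval, which happens to be $2^{-2(m+1)}$ regardless of the interval.

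Concretely, I would fix $x \in [0,1)$ and set $k = \Floor{2^m x}$, so that $\Floor{2^{m+1} x} \in \{2k, 2k+1\}$ according to whether $2^m x - k$ lies in $[0, 1/2)$ or in $[1/2, 1)$; this splits the proof into two cases. In each case I substitute the appropriate value ($k$ for $h_m$, and $2k$ or $2k+1$ for $h_{m+1}$) into formula \eqref{linear interpolation x2} and simplify, writing $y = x - k 2^{-m}$ for brevity. In the first case the quadratic terms cancel outright and $h_m(x) - h_{m+1}(x)$ reduces to $2^{-(m+1)} y$, while $2^m x - k \in [0,1/2)$ gives $g\bigl(2^m x - k\bigr) = 2(2^m x - k) = 2^{m+1} y$, so dividing by $2^{2(m+1)}$ matches. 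In the second case $h_m(x) - h_{m+1}(x)$ reduces to $2^{-(m+1)}\bigl(2^{-m} - y\bigr)$, and $2^m x - k \in [1/2, 1)$ gives $g\bigl(2^m x - k\bigr) = 2 - 2(2^m x - k) = 2^{m+1}\bigl(2^{-m} - y\bigr)$, which again matches after dividing by $2^{2(m+1)}$. The endpoint $x = 1$ is immediate, since $h_m(1) = h_{m+1}(1) = 1$ and $g^{m+1}(1) = g(0) = 0$.

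The only real bookkeeping obstacle is the cancellation of constants in the second case: there one expands $\bigl((2k+1)2^{-(m+1)}\bigr)^2 = (4k^2 + 4k + 1) 2^{-(2m+2)}$ together with the cross term $-(4k+3) 2^{-(2m+2)}$ arising from $\frac{4k+3}{2^{m+1}}\bigl(y - 2^{-(m+1)}\bigr)$, and must recognise that these combine to $(4k^2 - 2) 2^{-(2m+2)} = k^2 2^{-2m} - 2^{-(2m+1)}$, so that upon subtracting $h_{m+1}$ from $h_m$ everything except the term proportional to $y$ collapses to a single constant. Everything else is routine manipulation of dyadic fractions, so no conceptual difficulty is expected.
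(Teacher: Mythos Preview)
Your proposal is correct and follows essentially the same route as the paper: both arguments invoke Lemma \ref{sq NN lemma 1} to make $g^{m+1}$ explicit, split into the two cases $2^m x-\Floor{2^m x}\in[0,1/2)$ and $[1/2,1)$ (plus the endpoint $x=1$), use the resulting identity $\Floor{2^{m+1}x}\in\{2k,2k+1\}$, and verify the formula by direct substitution into \eqref{linear interpolation x2}. Your shorthand $y=x-k\,2^{-m}$ streamlines the algebra slightly, but the structure of the proof is the same.
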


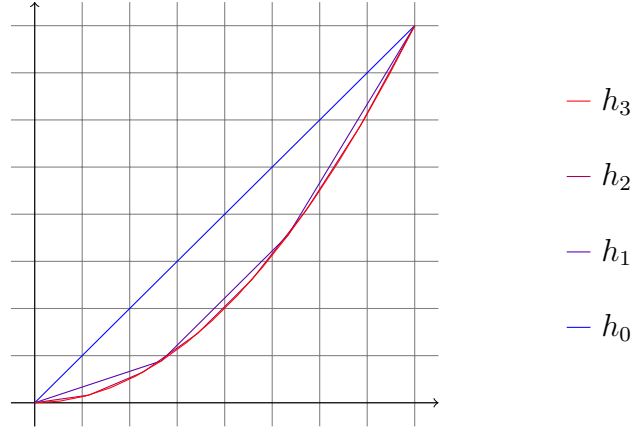
\begin{figure}
    \centering
    \begin{tikzpicture}[scale = 5]
        \draw[color = gray, step = .125] (-.0625, -.0625) grid (1.0625, 1.0625);
        \draw[->] (-.0625, 0) -- (1.0625, 0);
        \draw[->] (0, -.0625) -- (0, 1.0625);
        \foreach \n [count = \c, evaluate = \c as \shade using 100*(\c-1)/3, evaluate = \n] in {2^1, 2^..., 2^4}
            {
            \draw[color = red!\shade!blue, domain = 0:1, samples = \n] plot (\x, \x^2);
            \draw[color = red!\shade!blue] (1.4, .2 * \c) --++ (.0625, 0);
            }
        \foreach \n in {0,...,3}
            {
            \node[right] at (1.4625, .2 * \n + .2) {$h_\n$};
            }
    \end{tikzpicture}
    \label{plot hm}
    \caption{Plot of $h_m$ for $m = 0, 1, 2, 3$.}
\end{figure}

\begin{remark}
    The function $h_m$ is the linear interpolation of the square function on the dyadic intervals in $[0, 1]$ as it is seen in \ref{plot hm}. Indeed, given $k\in\{0,\dots,2^{-m}-1\}$, the map $x \mapsto \Floor{2^m x}$ is constant in the dyadic interval $[k2^{-m},(k+1)2^{-m})$, implying that $h_m$ is a piecewise one degree polynomial, and since
    \[
    \lim_{x\rightarrow \frac{k}{2^m}^-} h_m(x) = \lim_{x\rightarrow \frac{k}{2^m}^+} h_m(x) = \left(\frac{k}{2^m}\right)^2,
    \]
    it is continuous.
\end{remark}

\begin{proof}
    As a consequence of Lemma \ref{sq NN lemma 1}, we know that:
    \begin{equation}\label{explicit gm+1}
        \frac{g^{m+1}(x)}{2^{2(m+1)}} = \begin{cases}
        \frac{1}{2^{m+1}}\left(x-\frac{\Floor{2^mx}}{2^m}\right) & \txt{ if } 2^mx-\Floor{2^mx}\in[0,1/2]\\
        \frac{1}{2^{m+1}}\left(\frac{\Floor{2^mx}+1}{2^m}-x\right)& \txt{ if } 2^mx-\Floor{2^mx}\in[1/2,1]
    \end{cases}.
    \end{equation}
    As on the Lemma \ref{sq NN lemma 1}, we split the proof on three cases.
    \begin{itemize}
        \item If $2^mx-\Floor{2^mx}\in[0,1/2)$, by \eqref{floor entre 0 y .5},
        \begin{equation*}
            \Floor{2^{m+1}x}=2\Floor{2^mx}.
        \end{equation*}
        Thus, by the definition of $h_{m+1}$ \eqref{linear interpolation x2}, we get
        \begin{equation*}
            \begin{split}
                h_{m+1}(x) =& \frac{2\Floor{2^{m+1}x}+1}{2^{m+1}}\left(x-\frac{\Floor{2^{m+1}x}}{2^{m+1}}\right)+\left(\frac{\Floor{2^{m+1}x}}{2^{m+1}}\right)^2\\
                =& \frac{4\Floor{2^mx}+1}{2^{m+1}}\left(x-\frac{\Floor{2^mx}}{2^m}\right)+\left(\frac{\Floor{2^mx}}{2^m}\right)^2\\
                =& \frac{2\Floor{2^mx}+1}{2^m}\left(x-\frac{\Floor{2^mx}}{2^m}\right)+\left(\frac{\Floor{2^mx}}{2^m}\right)^2-\frac{1}{2^{m+1}}\left(x-\frac{\Floor{2^mx}}{2^m}\right)\\
                =&h_m(x)-\frac{g^{m+1}(x)}{2^{2(m+1)}}
            \end{split}
        \end{equation*}
        just as we wanted, where at the last equality we used the definition of $h_m$ given by \eqref{linear interpolation x2} and the explicit expression of $g^{m+1}$ given by \eqref{explicit gm+1}.
        \item If $2^mx-\Floor{2^mx}\in[1/2,1)$, by \eqref{floor entre .5 y 1},
        \begin{equation*}
            \Floor{2^{m+1}x}=2\Floor{2^mx}+1.
        \end{equation*}
        Thus, by the definition of $h_{m+1}$ \eqref{linear interpolation x2}, we get
        \begin{equation*}
            \begin{split}
                h_{m+1}(x) =& \frac{2\Floor{2^{m+1}x}+1}{2^{m+1}}\left(x-\frac{\Floor{2^{m+1}x}}{2^{m+1}}\right)+\left(\frac{\Floor{2^{m+1}x}}{2^{m+1}}\right)^2\\
                =& \left(\frac{2\Floor{2^mx}+1}{2^m}+\frac{1}{2^{m+1}}\right)\left(x-\left(\frac{\Floor{2^mx}}{2^m}+\frac{1}{2^{m+1}}\right)\right)+\left(\frac{\Floor{2^mx}}{2^m}+\frac{1}{2^{m+1}}\right)^2\\
                =& \frac{2\Floor{2^mx}+1}{2^m}\left(x-\frac{\Floor{2^mx}}{2^m}\right)-\frac{1}{2^{m+1}}\frac{2\Floor{2^mx}+1}{2^m}\\
                &+\frac{1}{2^{m+1}}\left(x-\frac{2\Floor{2^mx}+1}{2^{m+1}}\right) + \left(\frac{\Floor{2^mx}}{2^m}\right)^2+2\frac{\Floor{2^mx}}{2^m}\frac{1}{2^{m+1}}+\left(\frac{1}{2^{m+1}}\right)^2\\
                =&h_m(x) + \frac{1}{2^{m+1}}\left(x-\frac{2\Floor{2^mx}+1}{2^{m+1}}\right)-\frac{1}{2^{2(m+1)}}\\
                =&h_m(x) - \frac{g^{m+1}(x)}{2^{2(m+1)}}
            \end{split}
        \end{equation*}
        just as we wanted, where at the last equality we used the definition of $h_m$ given by \eqref{linear interpolation x2} and the explicit expression of $g^{m+1}$ given by \eqref{explicit gm+1}.
        \item The case $x = 1$ is obvious.
    \end{itemize}
\end{proof}
\begin{proof}[Proof of Proposition \ref{sq NN}]
    By the Lemma \ref{sq NN lemma 2}, the linear interpolation of the square function $h$ satisfies
    \begin{align*}
    h_m(x)-h_0(x)=&\sum_{k=0}^{m-1}h_{k+1}(x)-h_k(x)=-\sum_{k=0}^{m-1}\frac{g^{k+1}(x)}{2^{2(k+1)}}
    \end{align*}
    which implies that
    \[
    h_m(x)= x- \sum_{k=1}^m\frac{g^k(x)}{2^{2k}} = f_m(x).
    \]
    It is easy to check that
    \[
    \max_{x\in[0,1]}\Abs{h_m(x)-x^2}=\max_{k=0,\ldots,2^{m-1}-1}\lr{(}{\max_{x\in[\frac{k}{2^m},\frac{k+1}{2^m}]}\Abs{h_m(x)-x^2}}{)}=2^{-2(m+1)}
    \]
    concluding the proof.
\end{proof}
We can write $f_m$ as a strict $\ReLU$-NN because
\[
\min\{2 x, 2 - 2 x\} = 2 x -\max\{0, 4 x - 2\} = 2 \ReLU(x) - 4 \ReLU( x - 1/2)
\]
for every $x\in[0,1]$.

For $k\in\N$ and $m> 2$, we define
\begin{align*}
\alpha &\coloneqq \begin{bmatrix*}
    1\\
    1\\
    1\\
    1
\end{bmatrix*},\quad b \coloneqq \begin{bmatrix*}
    0\\
    -1/2\\
    -1\\
    0
\end{bmatrix*},\\
A_k &\coloneqq \begin{bmatrix*}
    2 & -4 & 2 & 0\\
    2 & -4 & 2 & 0\\
    2 & -4 & 2 & 0\\
    -2^{-2k+3} & 2^{-2k+4} & -2^{-2k+3} & 1
\end{bmatrix*},\\
\omega^m &\coloneqq \begin{bmatrix*}
    -2^{-2m+3} & 2^{-2m+4} & -2^{-2m+3} & 1
\end{bmatrix*}
\end{align*}
and a strict $\ReLU$-NN with
\begin{equation}\label{sq NN def}
W(\phi^m_{\txt{sq}}) = ((\alpha, b), (A_2, b), \ldots, (A_{m-1}, b), (\omega^m, 0))
\end{equation}
then, as proved on the Lemma 6.1 in \cite{Elbr_chter_2021}, $f_{m-1} = R(\phi^m_{\txt{sq}})$. After some calculations, we get
\begin{equation}\label{sq bounds}
\begin{split}
    \dimin(\phi^m_{\txt{sq}}) =& \dimout(\phi^m_{\txt{sq}}) = 1,\\
    M_1(\phi^m_{\txt{sq}}) =& 6,\quad M_{L(\phi^m_{\txt{sq}})}(\phi^m_{\txt{sq}}) = 4,\\
    M_\ell(\phi^m_{\txt{sq}}) =& 15 \quad \txt{ for } \ell = 2,\ldots,m-1,\\
    M(\phi^m_{\txt{sq}}) =& 10 + 15(m-2),\\
    L(\phi^m_{\txt{sq}}) =& m.
\end{split}
\end{equation}
Consequently, if we want that $\Abs{x^2 - R(\phi^m_{\txt{sq}})(x)}<\varepsilon<1$ for some $x\in[0,1]$, it is sufficient to choose $m = \lr{\lceil}{\frac{1}{2}\log_2\lr{(}{\frac{1}{\varepsilon}}{)}}{\rceil}$.

It follows from the equation
\begin{equation}\label{trapecio}
    xy = M^2\left(\left(\frac{x+y}{2M}\right)^2-\left(\frac{x-y}{2M}\right)^2\right) \;\;\;\; \forall M> 0
\end{equation}
the following Corollary.
\begin{corollary} \label{scalar mult NN}
    Keeping the same notation as the previous Proposition \ref{sq NN}, for every $M>0$ then
    \[
    \sup_{x,y\in[-M,M]}\Abs{xy - M^2\lr{(}{f_m\lr{(}{\frac{\Abs{x+y}}{2M}}{)}-f_m\lr{(}{\frac{\Abs{x-y}}{2M}}{)}}{)}} \leq M^2 2^{-2(m+1)+1}.
    \]
\end{corollary}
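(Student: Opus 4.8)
The plan is to reduce everything to Proposition \ref{sq NN} by means of the polarization-type identity \eqref{trapecio}. First I would fix $M > 0$ and arbitrary $x, y \in [-M, M]$, and note that $\frac{x+y}{2M}$ and $\frac{x-y}{2M}$ both lie in $[-1,1]$; consequently the quantities $u \coloneqq \frac{\Abs{x+y}}{2M}$ and $v \coloneqq \frac{\Abs{x-y}}{2M}$ lie in $[0,1]$, and moreover $u^2 = \lr{(}{\frac{x+y}{2M}}{)}^2$ and $v^2 = \lr{(}{\frac{x-y}{2M}}{)}^2$, since squaring discards the sign. This last observation is the whole reason the absolute values appear in the statement: it lets us feed arguments from $[0,1]$ into $f_m$, which is precisely the range on which Proposition \ref{sq NN} guarantees closeness to the square.

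Next I would use \eqref{trapecio} to rewrite the expression inside the supremum as a difference of two square-approximation errors:
\[
xy - M^2\lr{(}{f_m(u) - f_m(v)}{)} = M^2\lr{(}{u^2 - f_m(u)}{)} - M^2\lr{(}{v^2 - f_m(v)}{)}.
\]
Then the triangle inequality together with Proposition \ref{sq NN} applied to each term (legitimate because $u, v \in [0,1]$) gives
\[
\Abs{xy - M^2\lr{(}{f_m(u) - f_m(v)}{)}} \leq M^2 \Abs{u^2 - f_m(u)} + M^2 \Abs{v^2 - f_m(v)} \leq 2 M^2 2^{-2(m+1)}.
\]
Since this bound is uniform in $x, y \in [-M,M]$, taking the supremum yields the claimed estimate $M^2 2^{-2(m+1)+1}$.

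There is essentially no obstacle in this argument; it is a direct consequence of the identity \eqref{trapecio} and the already-proved uniform error bound for $f_m$. The only points worth stating explicitly are the membership $u, v \in [0,1]$ (so that Proposition \ref{sq NN} applies) and the fact that the factor $2$ in the final constant comes from bounding two error terms independently, so the resulting constant is not expected to be sharp — in general the two errors will not simultaneously attain their extremal values with the same sign.
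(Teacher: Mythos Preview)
Your argument is correct and matches the paper's approach exactly: the paper does not write out a proof of this corollary at all, merely indicating that it follows from the identity \eqref{trapecio}, and your proposal supplies precisely the routine details (membership of $u,v$ in $[0,1]$, then triangle inequality plus Proposition \ref{sq NN}) that the paper leaves implicit.
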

\begin{remark}\label{better mult}
    In the Proposition 3.7 of the paper \cite{kutyniok2022theoretical}, they use the formula valid for all $M>0$,
    \[
    xy = 2M^2\left(\left(\frac{x+y}{2M}\right)^2- \left(\frac{x}{2M}\right)^2-\left(\frac{y}{2M}\right)^2\right)
    \]
    instead of \eqref{trapecio}. Note that this identity requires squaring three terms instead of two, conducing to a bulkier NN.
\end{remark}
It is easy to transition to NN terms.
\begin{corollary}[Scalar multiplication NN]\label{scalar mult NN def and bound}
    For every $\varepsilon\in(0,M^2)$, setting \\$C = \max\{1,M\}$ and $m = \Floor{\frac{1}{2}\log_2\lr{(}{\frac{2C^2}{\varepsilon}}{)}}$, the strict $\ReLU$-NN with weights
    \begin{multline*}
        \phi^{m,M}_\times \coloneqq W_{\ReLU}^{-1}\lr{(}{\lr{(}{
        \begin{bmatrix*}
        M^2 & -M^2
        \end{bmatrix*}, 0
        }{)}}{)}\bullet P(\phi^m_{\txt{sq}},\phi^m_{\txt{sq}})\\
        \bullet W^{-1}_{\ReLU}\lr{(}{\lr{(}{\frac{1}{2M}\begin{bmatrix*}
        1 & 1\\
        -1 &-1\\
        1 & -1\\
        -1 & 1
        \end{bmatrix*}, 0}{)}, \left(\begin{bmatrix*}
        1 & 1 & 0 & 0 \\ 0 & 0 & 1 & 1
        \end{bmatrix*}, 0\right)}{)}
    \end{multline*}
approximates the product of any $x,y\in[-M,M]$ with an error of $\varepsilon$ and satisfies that
\begin{align*}
    \dim_{\txt{in}}(\phi_\times^{m,M}) =& 2, \quad \dim_{\txt{out}}(\phi_\times^{m,M}) = 1,\\
    M_1(\phi_\times^{m,M}) =& 8, \quad M_L(\phi_\times^{m,M}) = 8,\\
    M(\phi_\times^{m,M}) =& 30m-28,\\
    L(\phi^{m,M}_\times) =& m+1.
\end{align*}
\end{corollary}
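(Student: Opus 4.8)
The plan is to unfold the realization of $\phi^{m,M}_\times$ layer by layer, recognize it as the approximant of Corollary~\ref{scalar mult NN} with $m-1$ in place of $m$, read off the error, and then count parameters. Write $\phi^{m,M}_\times=\psi^{\mathrm{out}}\bullet P(\phi^m_{\txt{sq}},\phi^m_{\txt{sq}})\bullet\psi^{\mathrm{in}}$ for the three factors in the definition, so that $\psi^{\mathrm{in}}$ is the two-layer strict $\ReLU$-NN on the right and $\psi^{\mathrm{out}}=W^{-1}_{\ReLU}((\begin{bmatrix*}M^2&-M^2\end{bmatrix*},0))$ has a single layer. A direct check of the two affine maps of $\psi^{\mathrm{in}}$ gives $R(\psi^{\mathrm{in}})(x,y)=\left(\tfrac{\Abs{x+y}}{2M},\tfrac{\Abs{x-y}}{2M}\right)$, whose coordinates lie in $[0,1]$ for $x,y\in[-M,M]$. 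Since the two factors of the parallelization have equal depth $m$, $P(\phi^m_{\txt{sq}},\phi^m_{\txt{sq}})$ is the block-diagonal network of Definition~\ref{def: paralelization}, so by item~$(b.1)$ of Lemma~\ref{bounds para concat} together with $R(\phi^m_{\txt{sq}})=f_{m-1}$ (recalled right after~\eqref{sq NN def}) its realization is $(u,v)\mapsto(f_{m-1}(u),f_{m-1}(v))$; and $R(\psi^{\mathrm{out}})(u,v)=M^2u-M^2v$. Because $\bullet$ realizes composition (the computation in the proof of item~$(a.1)$ of Lemma~\ref{bounds para concat}, with Definition~\ref{concatenation def}) and $\psi^{\mathrm{out}}$ has identity activation at its single layer, so that no $\ReLU$ acts between the squared quantities and the subtraction, we obtain
\[
R(\phi^{m,M}_\times)(x,y)=M^2\lr{(}{f_{m-1}\lr{(}{\tfrac{\Abs{x+y}}{2M}}{)}-f_{m-1}\lr{(}{\tfrac{\Abs{x-y}}{2M}}{)}}{)}.
\]

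For the error, since $\left(\tfrac{x\pm y}{2M}\right)^2=\left(\tfrac{\Abs{x\pm y}}{2M}\right)^2$ with both arguments in $[0,1]$, the identity~\eqref{trapecio} and $\sup_{t\in[0,1]}\Abs{t^2-f_{m-1}(t)}=2^{-2m}$ from Proposition~\ref{sq NN}---equivalently, Corollary~\ref{scalar mult NN} applied with $m-1$ in place of $m$---give
\[
\sup_{x,y\in[-M,M]}\Abs{xy-R(\phi^{m,M}_\times)(x,y)}\leq M^2\,2^{-2m+1}.
\]
It then remains to verify that the choice $C=\max\{1,M\}$, $m=\Floor{\tfrac12\log_2(2C^2/\varepsilon)}$ makes the right-hand side $\leq\varepsilon$: this is a short estimate using $C\geq M$ and the definition of the floor, and for $\varepsilon$ small enough $m>2$, so that $\phi^m_{\txt{sq}}$ in~\eqref{sq NN def} is defined.

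For the parameters, Definition~\ref{concatenation def} immediately gives $\dimin(\phi^{m,M}_\times)=\dimin\psi^{\mathrm{in}}=2$, $\dimout(\phi^{m,M}_\times)=\dimout\psi^{\mathrm{out}}=1$, and, since each $\bullet$ drops the total layer count by one, $L(\phi^{m,M}_\times)=(1+m+2)-2=m+1$. For $M_1$, $M_L$ and $M$ one unwinds the two concatenations: the first layer of $\phi^{m,M}_\times$ is the first layer of $\psi^{\mathrm{in}}$ (so $M_1=8$); the last layer is $T^{\psi^{\mathrm{out}}}_1\circ T^{P}_{m}$, with matrix $\begin{bmatrix*}M^2\omega^m&-M^2\omega^m\end{bmatrix*}$ (so $M_L=8$); the two interior seam layers---each a composition of an affine map of one network with an affine map of the next---are counted directly (using Lemma~\ref{norm 0 prop} to bound $\Normm_0$ of the products); and the remaining interior layers are exactly the interior layers of $P(\phi^m_{\txt{sq}},\phi^m_{\txt{sq}})$, each contributing $2\cdot 15=30$ by~\eqref{sq bounds}. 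Summing over the $m+1$ layers yields the stated value of $M(\phi^{m,M}_\times)$. I expect the realization unfolding to be the main obstacle: one must track which affine maps $\bullet$ merges, check that the identity activation of the one-layer $\psi^{\mathrm{out}}$ keeps the combination $M^2u-M^2v$ from being clipped by a $\ReLU$, and use the equal depths of the two copies of $\phi^m_{\txt{sq}}$ so that $P$ is the plain block-diagonal network; once the realization is pinned down, the error and parameter computations are routine bookkeeping.
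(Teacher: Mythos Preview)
Your proposal is correct and follows essentially the same approach as the paper: the paper's proof is a one-line reference to the definition of $\phi^{m,M}_\times$, the bounds in~\eqref{sq bounds}, and the calculus of concatenation/parallelization, and you have simply carried out those steps explicitly---unfolding the realization via $\bullet$ and $P$, invoking Corollary~\ref{scalar mult NN} with $m-1$ for the error, and bookkeeping the weights layer by layer. Your treatment is more detailed than the paper's, but neither the strategy nor the key ingredients differ.
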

\begin{proof}
    The result follows directly from the definition of $\phi_\times^{m,M}$ \eqref{sq NN def} and its size bounds by \eqref{sq bounds} and Proposition \ref{sparse concatenation}.
\end{proof}

\subsection{Matrix Product Neural Network.}
For the next step, we define, for every matrix $A=(a_{i,j})_{i,j}\in\R^{k\times l}$ and every vector
\[
v = (v_{1,1},\ldots,v_{k,1}, v_{1, 2},\ldots, v_{kl}) \in\R^{kl},
\]
\[
\vect(A) \coloneqq (a_{1,1},\ldots, a_{k, 1}, a_{1,2},\ldots, a_{k,l})\in\R^{kl}\txt{ and } \mat_{k,l}(v) \coloneqq (v_{i,j})_{i,j}\in\R^{k\times l}.
\]
which helps following the dimensions of the input and keep the applied field closer where the matrix-vector multiplication is optimize, as in the renowned Python packages for Deep Learning: TensorFlow \cite{abadi2016tensorflow} and PyTorch \cite{paszke2019pytorch}. 
When there is no room for error, we will omit the subscripts in $\mat$.
For every $M>0$, we also define
\[
K_{d,n,l}(M) \coloneqq \lr{\{}{(A,B)\in\R^{d\times n}\times\R^{n\times l}\st \Norm{A}_2,\Norm{B}_2 \leq M}{\}}
\]
\begin{proposition}[Matrix multiplication NN]\label{Matrix multiplication NN}
    For every $\varepsilon, M> 0$, there is a strict $\ReLU$-NN $\Pi_{d,n,l}^{\varepsilon,M}$ such that
    \begin{equation}\label{multiplication proximity}
    \max_{(A,B)\in K_{d,n,l}(M)}\Norm{AB - \mat\lr{(}{R(\Pi_{d,n,l}^{\varepsilon,M})(\vect(A),\vect(B))}{)}}_2\leq \varepsilon
    \end{equation}
    with
    \begin{align*}
        \dimin(\Pi^{\varepsilon,M}_{d,n,l}) =& n(d+l), \quad \dimout(\Pi^{\varepsilon,M}_{d,n,l}) = dl,\\
        M_1(\Pi^{\varepsilon,M}_{d,n,l}) \leq& 8dln, \quad M_L(\Pi^{\varepsilon,M}_{d,n,l}) \leq 8dln,\\
        M(\Pi^{\varepsilon.M}_{d,n,l})\leq& dln\left(30\lr{\lfloor}{\frac{1}{2}\log_2\lr{(}{\frac{2n\sqrt{dl}C^2}{\varepsilon}}{)}}{\rfloor}-28\right),\\
        L(\Pi^{\varepsilon,M}_{d,n,l})=& \lr{\lfloor}{\frac{1}{2}\log_2\lr{(}{\frac{2n\sqrt{dl}C^2}{\varepsilon}}{)}}{\rfloor}+1.
    \end{align*}
    where $C=\max\{1,M\}$.
\end{proposition}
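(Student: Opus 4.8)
The plan is to build $\Pi_{d,n,l}^{\varepsilon,M}$ by parallelizing copies of the scalar multiplication network $\phi_\times^{m,M'}$ from Corollary \ref{scalar mult NN def and bound}, one for each product of entries needed, and then summing the appropriate products in a final affine layer. Recall that the $(i,j)$ entry of $AB$ is $\sum_{r=1}^n a_{i,r} b_{r,j}$, so we need all $dln$ scalar products $a_{i,r}b_{r,j}$. First I would observe that if $\Norm{A}_2 \le M$ then every entry of $A$ satisfies $\Abs{a_{i,r}} \le M$ (since $\Abs{a_{i,r}} = \Abs{e_i^T A e_r} \le \Norm{A}_2$), and likewise for $B$; so each scalar product involves inputs in $[-M,M]$ and we may legitimately feed them to $\phi_\times^{m,M}$.

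The construction goes in three stages, each realized by an affine map that can be absorbed into the $\varrho$-NN machinery via $\bullet$ (concatenation) and $P$ (parallelization). Stage one: a linear map $\R^{n(d+l)} \to \R^{2dln}$ that takes $(\vect(A),\vect(B))$ and outputs, in blocks of size $2$, the pairs $(a_{i,r}, b_{r,j})$ for all $(i,r,j)$ — this is a sparse $0/1$ selection matrix. Stage two: the parallelization $P(\phi_\times^{m,M}, \dots, \phi_\times^{m,M})$ of $dln$ copies, mapping $\R^{2dln} \to \R^{dln}$ and producing approximations $p_{i,r,j} \approx a_{i,r}b_{r,j}$. Stage three: the summation matrix $\R^{dln} \to \R^{dl}$ that sends $(p_{i,r,j})$ to $(\sum_r p_{i,r,j})_{i,j}$, again sparse $0/1$. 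Compose these with $\bullet$ (prepending/appending the affine maps to the parallelized block, exactly as done in Corollary \ref{scalar mult NN def and bound}) to get $\Pi_{d,n,l}^{\varepsilon,M}$.

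For the error bound \eqref{multiplication proximity}: if each scalar network has accuracy $\eta$, i.e. $\Abs{p_{i,r,j} - a_{i,r}b_{r,j}} \le \eta$, then the error matrix $E = AB - \mat(R(\Pi)(\dots))$ has entries bounded by $n\eta$ in absolute value, hence $\Norm{E}_2 \le \Norm{E}_F \le \sqrt{dl}\,n\eta$. So it suffices to take $\eta = \varepsilon / (n\sqrt{dl})$, which by Corollary \ref{scalar mult NN def and bound} (with $\varepsilon$ there replaced by $\eta$, and $C = \max\{1,M\}$) is achieved with $m = \lr{\lfloor}{\frac12 \log_2\lr{(}{\frac{2n\sqrt{dl}C^2}{\varepsilon}}{)}}{\rfloor}$ — matching the claimed layer count $L = m+1$. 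The parameter bounds then follow from Lemma \ref{bounds para concat}: since all $dln$ copies have equal depth $m+1$, parts $(b.6)$–$(b.7)$ give $M(P(\dots)) = \sum M(\phi_\times^{m,M}) = dln(30m - 28)$ and $M_L = \sum M_L(\phi_\times^{m,M}) = 8dln$; the prepended and appended selection/summation matrices, being sparse $0/1$ matrices each with at most one nonzero per row, do not increase $M$ by Lemma \ref{norm 0 prop} when fused into the first and last affine maps (one must check, as in the proof of Corollary \ref{scalar mult NN def and bound}, that composing with these matrices via $\bullet$ leaves connectivity controlled). The input/output dimensions $n(d+l)$ and $dl$ are immediate from the selection and summation stages.

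I expect the main obstacle to be the bookkeeping in Stage three combined with the $M_1, M_L$ bounds: one must verify carefully that fusing the summation matrix into the last layer of the parallelized scalar networks — whose last layers have the form $(\omega^m, 0)$ with $M_L = 4$ each — still yields $M_L(\Pi) \le 8dln$ rather than something larger, using that the summation matrix has exactly one nonzero entry per column in the relevant block structure so Lemma \ref{norm 0 prop} applies. The error propagation and the choice of $m$ are routine once the scalar estimate from Corollary \ref{scalar mult NN def and bound} is in hand; the genuine care is in tracking that each affine "glue" map is sparse enough that the connectivity and per-layer weight counts come out as stated.
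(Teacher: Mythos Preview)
Your proposal is correct and follows essentially the same route as the paper: select entry pairs via sparse $0/1$ matrices, parallelize $dln$ copies of the scalar multiplication network $\phi_\times^{m,M}$ with the same choice of $m$, sum with another sparse matrix, and bound $\Norm{E}_2$ by $\sqrt{dl}\cdot n\eta$ (the paper uses $\Norm{C}_2\le\sqrt{dl}\max_{i,j}\Abs{c_{i,j}}$ rather than the Frobenius bound, but the estimate is identical). The only cosmetic difference is that the paper first groups the $n$ scalar products for each $(i,k)$ into an inner-product network $\psi_{i,k}^{m,M}$ and then parallelizes those $dl$ networks, whereas you parallelize all $dln$ scalar products at once and sum afterwards; the resulting weights and the parameter bookkeeping via Lemma~\ref{norm 0 prop} and Lemma~\ref{bounds para concat} are the same.
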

The proof of this proposition from \cite{kutyniok2022theoretical} is based on the parallelization of all the scalar products involved on the usual matrix multiplication.
\begin{proof}
    By Corollary \ref{scalar mult NN def and bound}, we know that there is a NN $\phi^{m,M}_\times$ which approximates the product of two scalars bounded by $M$. Choosing $m = \lr{\lfloor}{\frac{1}{2}\log_2\lr{(}{\frac{2n\sqrt{dl}C^2}{\varepsilon}}{)}}{\rfloor}$, we get that, for every $A=(a_{i,j})_{i,j}\in\R^{d\times n}, B=(b_{i,j})_{i,j}\in\R^{n\times l}$ such that $\Norm{A}_2,\Norm{B}_2\leq C$
    \[
    \Abs{a_{i,j}b_{j,k} - R(\phi^{m,M}_\times)(a_{i,j}, b_{j,k})}\leq\frac{\varepsilon}{n\sqrt{dl}}.
    \]
    since $\Abs{a_{i,j}},\Abs{b_{j,k}}\leq C$. Calling
    \[
    D_{i,j,k}\in\R^{2\times n(d + l)} \txt{ such that } D_{i,j,k}(\vect(A),\vect(B)) = (a_{i,j}, b_{j,k}),
    \]
    let us define the NN
    \[
    \varphi_{i,j,k}^{m,M} = \phi^{m,M}_\times \bullet W^{-1}_{\ReLU}((D_{i,j,k},0)).
    \]
    and the scalar product of the $i$-th row by the $k$-th column:
    \[
    \psi^{m,M}_{i,k} = W^{-1}_{\ReLU}((1_n, 0))\bullet P(\varphi^{m,M}_{i,1,k},\ldots,\varphi^{m,M}_{i,n,k})\bullet W^{-1}_{\ReLU}\lr{(}
    {\lr{(}
    {\Eye_{n(d+l)}^{n},0}
    {)}}
    {)}
    \]
    where $1_n = (1,\ldots, 1)\in\R^n$ and
    \[
    \Eye^n_{n(d+l)}=\begin{bmatrix*}
        \Eye_{n(d+l)}\\
        \vdots\\
        \Eye_{n(d+l)}
    \end{bmatrix*}\in\R^{n^2(d+l)\times n(d+l)}.
    \]
    Finally, our aimed NN is then
    \[
    \Pi^{m,M}_{d,n,l}= P(\psi^{m,M}_{1,1},\ldots,\psi^{m,M}_{1, l}, \psi^{m,M}_{2, 1},\ldots,\psi^{m,M}_{d, l})\bullet W^{-1}_{\ReLU} \lr{(}{\lr{(}{\Eye_{n(d+l)}^{dl},0}{)}}{)}.
    \]
    which, keeping track of the definition of $\psi_{i,k}^{m,M}$, using the size bounds of Corollary \ref{scalar mult NN def and bound} and the norm 0 properties in Lemma \ref{norm 0 prop}, we get that
    \begin{align*}
        \dim_{\txt{in}}(\Pi^{m,M}_{d,n,l}) =& n(d+l), \quad \dim_{\txt{out}}(\Pi^{m,M}_{d,n,l}) = dl,\\
        M_1(\Pi^{m,M}_{d,n,l}) \leq& 8dln, \quad M_{L(\Pi^{m,M}_{d,n,l})}(\Pi^{m,M}_{d,n,l}) \leq 8dln,\\
        M(\Pi^{m.M}_{d,n,l})\leq& dln(30m-28),\\
        L(\Pi^{m,M}_{d,n,l})=& m+1.
    \end{align*}
    The proof ends using that for every $C = (c_{i,j})_{i,j}\in\R^{d\times l}$, we get
    \[
    \Norm{C}_2 \leq \sqrt{dl}\max_{i,j}\Abs{c_{i,j}}.
    \]
    Indeed, if we call $c_k = (c_{1,k},\dots,c_{d,k})$ for $k = 1,\dots,l$ and let $x = (x_1,\dots,x_l)\in\R^l$,
    \begin{equation*}
        \begin{split}
            \Abs{C x}_2 = \Abs{\sum_{k=1}^l x_k c_k}_2
            \leq \sum_{k=1}^l \Abs{x_k}\Abs{c_k}_2
            \leq \sqrt{d}\max_{i,j}\Abs{c_{i,j}} \Abs{x}_1
            \leq \sqrt{dl}\max_{i,j}\Abs{c_{i,j}} \Abs{x}_2
        \end{split}
    \end{equation*}
    where we used that for any $n\in\N$ and $v\in\R^n$,
    \begin{align*}
        \Abs{v}_2 \leq \sqrt{n}\Abs{v}_\infty, && \Abs{v}_1 \leq \sqrt{n} \Abs{v}_2
    \end{align*}
    and that, for any $k\in\{1,\dots, l\}$,
    \[
    \Abs{c_k}_\infty \leq \max_{i,j}\Abs{c_{i,j}}.
    \]
\end{proof}

\subsection{Matrix Inversion Neural Network.}

As already mentioned in Proposition \ref{B invertible}, our matrix of interest $B\in\R^{d\times d}$ is in $\mathcal{I}_d(\alpha, \delta)$ for some $\alpha > 0$ and $\delta \in [0,1)$ so we rely on the so called Neumann series to calculate the inverse of $B$. To this end, we use the following identity valid for any matrix $A\in\R^{d\times d}$ and $N \in \N$:
\begin{equation}\label{iterative neumann}
    \begin{split}
        \sum_{k=0}^{2^{N+1}-1} A^k =& \sum_{k=0}^{2^N-1}A^k + \sum_{k=2^N}^{2^{N+1}-1}A^k \\
        =&\sum_{k=0}^{2^N-1}A^k + A^{2^N}\sum_{k=2^N}^{2^{N+1}-1}A^{k-2^N}\\
        =& \sum_{k=0}^{2^N-1}A^k+A^{2^N}\sum_{k=0}^{2^N-1}A^k \\
        =& \left(\Eye_d+A^{2^N}\right)\sum_{k=0}^{2^N-1}A^k
    \end{split}
\end{equation}
Consequently, by induction, we also get:
\begin{equation}\label{product neumann}
    \sum_{k=0}^{2^{N+1}-1} A^k = \prod_{k=0}^N\left(\Eye_d + A^{2^k}\right).
\end{equation}
The identity \eqref{iterative neumann} is more operationally efficient than the direct formula: if we already evaluated $\sum\limits_{k=0}^{2^{N}-1}A^k$ and $A^{2^{N-1}}$, we only need two matrix product, one to evaluate $A^{2^N}$ and the other one to get $\sum\limits_{k=0}^{2^{N+1}-1}A^k$.
\begin{theorem}[Matrix inversion NN]\label{matrix inversion NN}
    For every $\varepsilon \in (0,1/4)$ and $\alpha > 0$, there is a strict $\ReLU$-NN $\Upsilon^{\varepsilon,\alpha}_{d,\delta}$ such that for every $\delta \in [0, 1)$ and $B \in\cali{I}_d(\alpha, \delta)$
    \[
    \Norm{B^{-1} - \mat(R(\Upsilon^{\varepsilon,\alpha}_{d,\delta})(\vect(B)))}_2 \leq \varepsilon.
    \]
    satisfying
    \begin{align*}
        \dim_{\txt{in}}(\Upsilon^{\varepsilon,\alpha}_{d,\delta}) =&\dim_{\txt{out}}(\Upsilon^{\varepsilon,\alpha}_{d,\delta})= d^2\\
        M(\Upsilon^{\varepsilon,\alpha}_{d,\delta})\leq& n(\varepsilon/\alpha,\delta)(60d^3(N(\varepsilon/\alpha,\delta)-1)+2d^2)+d^3(12N(\varepsilon/\alpha,\delta)-2)+4d^2+2d,\\
        L(\Sigma_{d,N}^\varepsilon) =& N(\varepsilon/\alpha,\delta)(n(\varepsilon/\alpha,\delta)+2)-2
    \end{align*}
    where
    \begin{align*}
        N(\varepsilon,\delta)&\coloneqq \Ceil{\log_2\left( \max \left \{ \frac{\log_2((1-\delta)\varepsilon)}{\log_2(\delta)}, 2 \right \}\right)},\\
        n(\varepsilon,\delta)&\coloneqq2^{N(\varepsilon,\delta)-1} + 1 + \Floor{\frac{1}{2}\log_2\left(\frac{d^2}{\varepsilon}\right)}.
    \end{align*}
\end{theorem}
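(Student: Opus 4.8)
The plan is to realize, up to a controlled error, the truncated Neumann series of Proposition \ref{B invertible}. Set $A \coloneqq \Eye_d - \alpha B$; the hypothesis $B \in \mathcal{I}_d(\alpha,\delta)$ says exactly that $\Norm{A}_2 \leq \delta < 1$, so $B^{-1} = \alpha(\Eye_d-A)^{-1} = \alpha\sum_{k=0}^\infty A^k$. First I would fix $N \coloneqq N(\varepsilon/\alpha,\delta)$ and use the quantitative estimate \eqref{neumann error} of Lemma \ref{Neumann} to bound the truncation error
\[
\Norm{B^{-1} - \alpha\sum_{k=0}^{2^N-1} A^k}_2 \;=\; \alpha\,\Norm{(\Eye_d-A)^{-1} - \sum_{k=0}^{2^N-1} A^k}_2 \;\leq\; \alpha\,\frac{\delta^{2^N}}{1-\delta},
\]
the definition of $N(\cdot,\delta)$ being precisely what forces the right-hand side below the share of $\varepsilon$ reserved for truncation. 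It then remains to approximate $S_N \coloneqq \sum_{k=0}^{2^N-1} A^k$; for this I would use the product form \eqref{product neumann}, $S_N = \prod_{k=0}^{N-1}(\Eye_d + A^{2^k})$, unravelled through the doubling identity \eqref{iterative neumann}: with $S_j \coloneqq \sum_{k=0}^{2^j-1} A^k$ one has $S_{j+1} = S_j + A^{2^j} S_j$, which together with the trivial recursion $A^{2^{j+1}} = (A^{2^j})^2$ needs only two matrix products per step.

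Second, I would assemble $\Upsilon^{\varepsilon,\alpha}_{d,\delta}$ as a sparse concatenation (Definition \ref{sparse concatenation}) of $N$ almost identical \emph{blocks}: an initial affine layer maps $\vect(B)$ to the starting state $(\vect(S_0),\vect(T_0)) = (\vect(\Eye_d),\vect(\Eye_d - \alpha B)) \in \R^{d^2}\times\R^{d^2}$ (the constant $\Eye_d$ going into a bias), and a final affine layer multiplies the first component by $\alpha$ and discards the unused $T_N$. Each block takes a state $(\vect(S_j),\vect(T_j))$ and must output $(\vect(S_j + T_j S_j),\vect(T_j^2))$; concretely it consists of an affine map duplicating $T_j$ and arranging the pairs $(\vect(T_j),\vect(S_j))$ and $(\vect(T_j),\vect(T_j))$, then the parallelization (Lemma \ref{bounds para concat}) of two copies of the matrix multiplication network $\Pi^{\varepsilon'',M}_{d,d,d}$ of Proposition \ref{Matrix multiplication NN} run alongside an identity network $\phi^{\Id}_{d^2,\,\cdot}$ (Corollary \ref{ReLU represents idn}) of matching depth that carries $\vect(S_j)$ forward, and finally an affine map returning $S_{j+1} = S_j + (T_j S_j)$ and $T_{j+1} = T_j^2$ (this last affine map folds into the first affine map of the next block, so the chaining is exact). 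The layer and weight counts in the statement then follow by adding up the contributions of the $N$ blocks with Lemma \ref{bounds para concat}, Proposition \ref{Matrix multiplication NN} and Corollary \ref{scalar mult NN def and bound}, $N(\varepsilon/\alpha,\delta)$ being the number of blocks and $n(\varepsilon/\alpha,\delta)$ the depth of a single block.

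Third — and this is where the real work lies — I would carry out the error analysis. For the exact iterates one has $\Norm{T_j}_2 = \Norm{A^{2^j}}_2 \leq \delta^{2^j} \leq \delta$ and $\Norm{S_j}_2 \leq \sum_{k\geq 0}\delta^k = 1/(1-\delta)$, so a fixed multiple of $M \coloneqq \max\{1, 1/(1-\delta)\}$ is a legitimate norm bound for invoking Proposition \ref{Matrix multiplication NN}; the slack is needed because the actual block outputs $\widetilde S_j,\widetilde T_j$ are only approximate. The hard part is to prove, by induction on $j$, that the per-product precision $\varepsilon''$ propagates in a controlled way: $\Norm{\widetilde S_j - S_j}_2$ and $\Norm{\widetilde T_j - T_j}_2$ grow at worst geometrically in $j$ (each squaring and each product multiplies the accumulated error by a factor governed by $M$), while simultaneously $\Norm{\widetilde S_j}_2$ and $\Norm{\widetilde T_j}_2$ stay inside the allowed slack, so that the next application of $\Pi^{\varepsilon'',M}_{d,d,d}$ remains valid. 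Choosing $\varepsilon''$ as an explicit function of $\varepsilon,\alpha,\delta,d$ and $N$ that makes the accumulated error after $N$ steps, rescaled by $\alpha$, at most the network's share of $\varepsilon$ then closes the estimate; tightening exactly these propagation constants and streamlining the block structure is what yields the sharper bounds over \cite{kutyniok2022theoretical} advertised in Remark \ref{better inversion NN}.
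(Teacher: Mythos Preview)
Your high-level plan --- truncate the Neumann series at $2^N$ terms via \eqref{neumann error}, realise $S_N$ through the doubling recursion $S_{j+1}=(\Eye_d+A^{2^j})S_j$, and stack $N$ blocks each built from two parallel copies of $\Pi_{d,d,d}$ --- is exactly the paper's architecture. The difference is in how the norms of the intermediate matrices are controlled, and this difference is what determines whether you can reach the \emph{stated} bounds.

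You propose to feed the raw pair $(S_j,A^{2^j})$ to the multiplication networks and invoke Proposition~\ref{Matrix multiplication NN} with $M=\max\{1,1/(1-\delta)\}$. The paper instead \emph{rescales} before multiplying: it tracks $(A/2)^{2^j}$ and $\overline S_j\coloneqq 3\cdot 2^{-(2^j+1)}S_j$, which (via the identity \eqref{product neumann normalizado}) keeps every input to $\Pi_{d,d,d}^{\cdot,1}$ of spectral norm at most $1$ --- indeed at most $1/2$ for the power factor. Two consequences follow. First, with factors of norm $\le 1/2$ the induction shows the per-step error does \emph{not} accumulate: one proves $\Norm{A_{2^j}-(A/2)^{2^j}}_2\le\varepsilon$ and $\Norm{\sigma_j-\overline S_j}_2\le\varepsilon$ for all $j$ with the \emph{same} $\varepsilon$, whereas in your unscaled scheme each product can amplify the running error by $1/(1-\delta)$. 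Second, the only place precision must be tightened is the final undoing of the rescaling by $C(N)=2^{2^N+1}/3$, and it is precisely $\tfrac12\log_2 C(N)$ that produces the $2^{N-1}+1$ term in $n(\varepsilon,\delta)$. In your scheme the block depth would instead carry a $\log_2\!\bigl(1/(1-\delta)\bigr)$ contribution from $M$ and a further $N$- and $\delta$-dependent contribution from shrinking $\varepsilon''$ to absorb the geometric error growth; neither matches the form of $n(\varepsilon,\delta)$ in the statement. So the missing idea is the halving/rescaling trick (including the $3/8$ factor that makes the base case $N=2$ go through); without it the construction is sound but the precise architectural bounds you are asked to prove do not follow.
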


Recall that we defined $\mathcal{I}_d(\alpha, \delta)$ in $\eqref{def invertible set}$ as the set
\[
\mathcal{I}_d(\alpha, \delta) = \left \{ A \in \R^{d \times d} \; \middle | \; \Norm{\Eye_d - \alpha A}_2 \leq \delta \right \}.
\]

\begin{remark}\label{better inversion NN}
    The result and its proof are an improvement of the Theorem 3.8 in \cite{kutyniok2022theoretical} when $\alpha = 1$: we give more precise bounds and we shorten the proof by skipping the construction of intermediate NNs.
\end{remark}

\begin{remark}
    As said already mention on Remark \ref{relation delta alpha}, for a symmetric matrix with positive eigenvalues $B \in \R^{d \times d}$, $N(\varepsilon, \Norm{\Eye_d - \alpha B}_2)$ explodes as $\alpha \rightarrow 0$ in Theorem \ref{matrix inversion NN}. We know that, if $\alpha$ is small enough,
    \[
    \Norm{\Eye_d - \alpha B}_2 = 1 - \alpha \lambda_{\min}
    \]
    where $\lambda_{\min}$ is the smallest eigenvalue of $B$ and, when $\alpha$ small, 
    \begin{align*}
        N(\varepsilon / \alpha, 1 - \alpha \lambda_{\min}) &\geq \log_2 \left ( \frac{\log_2(\lambda_{\min} \varepsilon)}{\log_2(1 - \alpha \lambda_{\min})} \right )\\
        \txt{ and } \quad n(\varepsilon / \alpha, 1 - \alpha \lambda_{\min}) & \geq \frac{1}{2} \left ( \frac{\log_2(\lambda_{\min} \varepsilon)}{\log_2(1 - \alpha \lambda_{\min})} + \log_2 \left ( d \sqrt{\frac{\alpha}{\varepsilon}} \right ) \right )
    \end{align*}
    implying the explosion of the bounds of Theorem \ref{matrix inversion NN}.
\end{remark}

\begin{proof}
    \underline{Step 1:} Neumann series approximating NN of any matrix.

    In this step, we do two things: construct a NN and show it approximates the Neumann series.

    \underline{Step 1.1:} Notation and construction of Neumann series NN.

    Let us fix $\varepsilon\in(0,\frac{1}{4})$ and a matrix $A\in\R^{d\times d}$ such that $\Norm{A}_2<1$ and let us call
    \[
    S_N = \sum_{k=0}^{2^N-1}A^k.
    \]

    To approximate $S_N$ using equations \eqref{iterative neumann} or \eqref{product neumann}, we encounter a problem bounding the spectral norm of the multiplied matrices. We apply recurrently the NN of Proposition \ref{Matrix multiplication NN} and for that we must control the spectral norm of the inputs and the output. A way to achieve it is by ensuring that one of the inputs of the product NN has a spectral norm strictly less than 1. That is why the NNs we are building through this proof approximates the powers of the matrices halved using the formula
    \begin{equation}\label{product neumann normalizado}
        \begin{split}
            S_{N+1} =& 3^{-1} \cdot 2^{\sum_{k=1}^N2^k + 3}\prod_{k=1}^N\left[\left(\frac{A}{2}\right)^{2^k}+2^{-2^k}\Eye_d\right] \frac{3}{8} S_1 \\
            =& 3^{-1} \cdot 2^{2^{N+1} + 1} \prod_{k = 1}^N \left [ \left ( \frac{A}{2} \right )^{2^k} + 2^{-2^k} \Eye_d \right ] \frac{3}{8} S_1
        \end{split}
    \end{equation}
    that follows from equation \eqref{product neumann}. This gives us a representation of $S_{N+1}$ in terms of products of matrices with spectral norm less than a half. The factor $\frac{3}{8}$ help us when working on the special case $N = 2$.

    Approximating $S_1$ with NNs is as easy as defining
    \begin{align*}
        \Sigma_{d, 1}^{\varepsilon} \coloneqq \left(\left(\Eye_{d^2}, \vect(\Eye_d)\right)\right)
    \end{align*}
    which satisfies trivially that
    \begin{align*}
        \mat(R(\Sigma_{d,1}^{\varepsilon})(\vect(A))) = A + \Eye_d = S_1.
    \end{align*}
    With this case solved, we define the building blocks of the final NN:
    \begin{multline}\label{varphi def}
        \varphi_{d}^\varepsilon \coloneqq P\left(\Pi_{d,d,d}^{\varepsilon,1} \bullet W^{-1}_{\ReLU}\left(\left(\frac{1}{2}\begin{bmatrix*}
        \Eye_{d^2}\\ \Eye_{d^2}
    \end{bmatrix*},0\right)\right), W^{-1}_{\ReLU} \left(\left(\frac{3}{8}\Eye_{d^2}, \frac{3}{8}\vect(\Eye_d)\right)\right)\bullet 
    \phi^{\Id}_{d^2,L(\Pi_{d,d,d}^{\varepsilon,1})} \right) \\
     \bullet W^{-1}_{\ReLU}\left(\left( \begin{bmatrix*}
        \Eye_{d^2}\\ \Eye_{d^2}
    \end{bmatrix*}, 0\right)\right)
    \end{multline}
    and
    \begin{equation}\label{psi def}
        \psi_{d,k}^\varepsilon \coloneqq P(\Pi_{d,d,d}^{\varepsilon/4,1}, \Pi_{d,d,d}^{\varepsilon/4,1}) \bullet W^{-1}_{\ReLU} \left ( \left ( \begin{bmatrix*}
        \Eye_{d^2} & 0\\
        \Eye_{d^2} & 0\\
        \Eye_{d^2} & 0\\
        0 & \Eye_{d^2}
    \end{bmatrix*}, \begin{bmatrix*}
        0\\0\\ 2^{-2^k}\vect\left(\Eye_d\right)\\0
    \end{bmatrix*}\right)\right)
    \end{equation}
    where $k\in\N$.
    We define therefore the following NN for $N \in \N$:
    \begin{equation}\label{Phi def}
        \Phi_{d,N}^{\varepsilon}\coloneqq \psi_{d,N-1}^{\varepsilon}\odot\psi_{d,N-2}^{\varepsilon}\odot\ldots\odot\psi_{d,2}^{\varepsilon}\odot\psi_{d,1}^{\varepsilon}\odot\varphi^\varepsilon_d.    
    \end{equation}
    When $N = 1$, we assume that $\Phi_{d, N}^\varepsilon = \varphi_d^\varepsilon$. We claim that it approximates $3 \cdot 2^{-(2^{N}+1)}S_N$. To prove that, we define for convenience
    \[
    \begin{bmatrix*}
        A_{2^N}\\
        \sigma_N
    \end{bmatrix*} \coloneqq \mat\left(R(\Phi_{d,N}^\varepsilon)(\vect(A))\right) \txt{ and } \overline{S}_N \coloneqq 3 \cdot 2^{-(2^{N}+1)}S_N \txt{ for all } N\in\N.
    \]
    
    \begin{figure}
        \centering
        \begin{tikzpicture}[scale = 1]
            \node at (.7,0) (A) {$A$};
            \node at (3, .5) (Pi1) [draw, minimum height = 20, minimum width = 20, outer ysep = -3] {$\Pi$};
            \node at (3, -.5) (Id1) [draw, minimum height = 20, minimum width = 20] {$\phi^{\Id}$};
            \node at (Pi1 -| 5.6,0) (A2) {$(\frac{A}{2})^2$};
            \node at (Id1 -| 5.6,0) (S1) {$\frac{3}{8}S_1$};
            \draw[|-] (A) [rounded corners] -- (1.5,0)  -- (1.5, 0 |- Pi1.north west) to node [above = -2 pt, font = \tiny] {$\cdot 1/2$} (Pi1.north west);
            \draw[|-] (A) [rounded corners] -- (1.5,0)  -- (1.5, 0 |- Pi1.south west) to node [above = -2 pt, font = \tiny] {$\cdot 1/2$} (Pi1.south west);
            \draw[|-] (A) [rounded corners] -- (1.5,0) -- (1.5,0 |- Id1) to node [above = -2 pt, font = \tiny] {$+\Id$} (Id1);
            \draw[] (Pi1) --++ (1.5,0);
            \draw[] (Id1) --++ (1.5,0) node [midway, font = \tiny, above = -2 pt] {$\cdot 3/8$} node [pos = 1, name = finId1] {};
            \draw (1.4, 2) rectangle (finId1 |- 3,-1.05);
            \draw[->] (finId1|- Pi1) -- (A2);
            \draw[->] (finId1|- Id1) -- (S1);
            \node at (3,1.5) (varphi) {$\varphi$};
            \node at (8.3,.5) (Pi2) [draw, minimum height = 20, minimum width = 20, outer ysep = -3] {$\Pi$};
            \node at (8.3,-.5) (Pi3) [draw, minimum height = 20, minimum width = 20, outer ysep = -3] {$\Pi$};
            \draw[|-] (A2) [rounded corners] -- (6.8,.5) -- (6.8,0 |- Pi2.north west) -- (Pi2.north west);
            \draw[|-] (A2) [rounded corners] -- (6.8,.5) -- (6.8,0 |- Pi2.south west) -- (Pi2.south west);
            \draw[|-] (A2) [rounded corners] -- (6.8,.5) -- (6.8,0 |- Pi3.north west) -- (Pi3.north west) node [pos = .5, above = -2 pt, font = \tiny] {$+2^{-2}\Id$};
            \draw[|-] (S1) [rounded corners] -- (6.8,-.5) -- (6.8,0 |- Pi3.south west) -- (Pi3.south west);
            \draw[] (Pi2) --++ (.5,0);
            \draw[] (Pi3) --++ (.5,0) node [pos = 1, name = finPi3] {};
            \draw (6.7,2) rectangle (finPi3 |- 8.6,-1.05);
            \path (6.7,1.5) -- (finPi3 |- 6.7,1.5) node [midway, name = psi] {$\psi_{\cdot,1}$};
            \node at (9.75,.5) (A4) {$\left(\frac{A}{2}\right)^{2^2}$};
            \node at (9.75,-.5) (S2) {$\frac{3}{2^5}S_2$};
            \draw[->] (finPi3|- Pi3) -- (S2);
            \draw[->] (finPi3|- Pi2) -- (A4);
            \draw[|-] (S2) -- (10.5,-.5);
            \draw[dashed] (10.5,-.5) -- (11, -.5);
            \draw[|-] (A4) -- (10.5,.5);
            \draw[dashed] (10.5,.5) -- (11,.5);

            \path (A2) --++ (0,-4) node [pos = 1] (Ak) {$\left(\frac{A}{2}\right)^{2^k}$};
            \path (S1) --++ (0,-4) node [pos = 1] (Sk) {$\frac{1}{2^k}S_k$};
            \path (Pi2) --++ (0,-4) node [pos = 1,draw, minimum height = 20, minimum width = 20, outer ysep = -3] (Pi4) {$\Pi$};
            \path (Pi3) --++ (0,-4) node [pos=1,draw, minimum height = 20, minimum width = 20, outer ysep = -3] (Pi5) {$\Pi$};
            \draw[|-] (Ak) [rounded corners] -- (6.8,0 |- Ak) -- (6.8,0 |- Pi4.north west) -- (Pi4.north west);
            \draw[|-] (Ak) [rounded corners] -- (6.8,0 |- Ak) -- (6.8,0 |- Pi4.south west) -- (Pi4.south west);
            \draw[|-] (Ak) [rounded corners] -- (6.8,0 |- Ak) -- (6.8,0 |- Pi5.north west) -- (Pi5.north west) node [pos = .5, above = -2 pt, font = \tiny] {$+2^{-2^k}\Id$};
            \draw[|-] (Sk) [rounded corners] -- (6.8,0|- Sk) -- (6.8,0 |- Pi5.south west) -- (Pi5.south west);
            \draw[] (Pi4) --++ (.5,0);
            \draw[] (Pi5) --++ (.5,0) node [pos = 1, name = finPi5] {};
            \draw (6.7, -2) rectangle (finPi5 |- 8.6,-5.05);
            \path (psi) --++ (0,-4) node[pos = 1] {$\psi_{\cdot, k}$};
            \draw[->,dashed] (finId1 |- Ak) -- (Ak);
            \draw[->,dashed] (finId1 |- Sk) -- (Sk);
            \node at (10, 0|- Pi4) (Ak+1) {$\left(\frac{A}{2}\right)^{2^{k+1}}$};
            \draw[->] (finPi3 |- Pi4) -- (Ak+1);
            \node[right] at (9.21, 0 |- Pi5) (Sk+1) {$\frac{3}{2^{2^{k + 1}+1}}S_{k+1}$};
            \draw[->] (finPi3 |- Pi5) -- (Sk+1);

            \draw[dotted] (1.1, 3) -- (11.4, 3) -- (11.4, -1.45) -- (9, -1.45) -- (9, -5.45) -- (1.1, -5.45) -- cycle;
            \node at (6.25, 2.5) {$\Phi_{\cdot,k+1}$};
        \end{tikzpicture}
        \caption{NN scheme of $\Phi_{\cdot,k+1}$.}
        \label{Phi scheme}
    \end{figure}
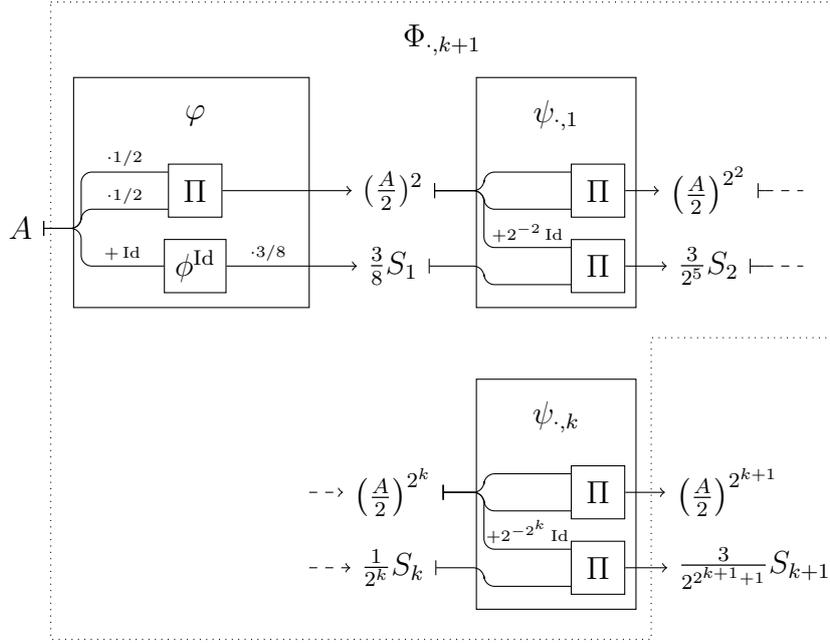

    As it can be seen in the definition of the building blocks of $\Phi_{d, N}^\varepsilon$ or at the scheme \ref{Phi scheme}, the value of $A_{2^N}$ does not depend on $\sigma_k$ for $k = 1, \dots, N$.

    \underline{Step 1.2:} We will prove by induction that
    \begin{align}
        \Norm{A_{2^N} - (A/2)^{2^N}}_2 &\leq \varepsilon \label{power proximity}\\
        \txt{ and } \Norm{A_{2^N}}_2 &\leq \frac{1}{2}. \label{norm bound power approximation}
    \end{align}
    When $N = 1$,
    \begin{align*}
        A_{2^N}^2 &= \mat \left ( R \left ( \Pi_{d, d, d}^{\varepsilon, 1} \bullet W^{-1}_{\ReLU} \left ( \frac{1}{2} \begin{bmatrix*} \Eye_{d^2} & \Eye_{d^2} \end{bmatrix*}, 0 \right ) \right )(\vect(A)) \right ) \\
        &= \mat \left ( R\left ( \Pi_{d, d, d}^{\varepsilon, 1} \left ( \frac{1}{2}\vect(A), \frac{1}{2} \vect(A) \right ) \right ) \right )
    \end{align*}
    and since $\Norm{A/2}_2 < 1$,
    \begin{equation}
        \Norm{A_{2} - (A/2)^2}_2 \leq \varepsilon \tag{1}
    \end{equation}
    implied by the properties of $\Pi_{d, d, d}^{\varepsilon, 1}$, see Proposition \ref{Matrix multiplication NN}. It also follows that
    \begin{equation}
        \Norm{A_2}_2 \leq \varepsilon + \Norm{(A/2)^2}_2 \leq \frac{1}{4} + \frac{1}{4} = \frac{1}{2}. \tag{2}
    \end{equation}
    With (1) and (2), we have proven the base case of the induction.

    Now, suppose that \eqref{power proximity} and \eqref{norm bound power approximation} are satisfied for $N \in \N$.

    We know by the definition of $A_{2^{N+1}}$
    \[
        A_{2^{N+1}} = \mat \left ( R(\Pi_{d, d, d}^{\varepsilon / 4, 1}) \left ( \vect \left ( A_{2^N} \right ), \left ( A_{2^N} \right ) \right ) \right )
    \]
    By the induction hypothesis, $\Norm{A_{2^N}}_2 \leq \frac{1}{2} < 1$, so we can apply \eqref{multiplication proximity}:
    \begin{equation}
        \begin{split}
            \Norm{A_{2^{N+1}}-(A/2)^{2^{N+1}}}_2 \leq& \Norm{A_{2^{N+1}}-\left(A_{2^N}\right)^2}_2+\Norm{\left(A_{2^N}\right)^2-(A/2)^{2^{N+1}}}_2\\
            \leq& \frac{\varepsilon}{4}+\Norm{A_{2^N}-(A/2)^{2^N}}_2\Norm{A_{2^N}+(A/2)^{2^N}}_2\\
            \leq& \frac{\varepsilon}{4} + \varepsilon \left(\frac{1}{2}+\frac{1}{2^{2^N}}\right)\leq\frac{\varepsilon}{4}+\varepsilon\left(\frac{1}{2}+\frac{1}{4}\right)=\varepsilon.
        \end{split} \tag{1}
    \end{equation}
    Note how we used the fact that we are approximating $(A/2)^{2^N}$ instead of $A^{2^N}$ to bound the spectral norm of $A_{2^N}$ by a half unlocking the approximation property of $\Pi_{d, d, d}^{\varepsilon / 4, 1}$ and also at the last line in (1) to prove \eqref{power proximity}.
    
    We get as before, using the triangular inequality,
    \begin{equation}
    \Norm{\widetilde{A/2}^{2^{N+1}}}_2\leq \varepsilon + \Norm{(A/2)^{2^{N+1}}}_2\leq\frac{1}{4}+\frac{1}{2^{2^{N+1}}}\leq\frac{1}{2}. \tag{2}
    \end{equation}
    With (1) and (2), we proved the inductive step.

    \underline{Step 1.3:} Using step 1.2, we will prove by induction that
    \begin{align}
        \Norm{\sigma_N-\overline{S}_N}_2 & \leq \varepsilon \label{neumann proximity}\\
        \txt{and }\Norm{\sigma_N}_2 & \leq 1. \label{norm bound neumann}
    \end{align}

    When $N=1$, since
    \begin{align*}
        \sigma_1 = \mat\left(\frac{3}{8}(\vect(A)+\vect(\Id_d))\right) =\frac{3}{8}(A+\Eye_d),
    \end{align*}
    we get that $\Norm{\sigma_1}_2\leq \frac{3}{4}$ and
    \[
    \Norm{\sigma_1 - \frac{3}{2^{2^1+1}}S_1}_2=\Norm{\sigma_1 - \frac{3}{8}S_1}_2 = 0< \varepsilon.
    \]
    which proves \eqref{neumann proximity} and \eqref{norm bound neumann} for $N = 1$.

    When $N = 2$, since
    \begin{align*}
        \sigma_2 &= \mat \left ( R(\Pi_{d, d, d}^{\varepsilon / 4, 1}) \left ( \vect \left ( A_{2} + 2^{-2} \Eye_{d} \right ), \sigma_1 \right ) \right ) \\
        &= \mat \left ( R(\Pi_{d, d, d}^{\varepsilon / 4, 1}) \left ( \vect \left ( A_{2} + 2^{-2} \Eye_{d} \right ), \frac{3}{8}S_1 \right ) \right )
    \end{align*}
    and $\Norm{\frac{3}{8} S_1}_2 \leq \frac{3}{4}$, we get
    \begin{equation}
        \Norm{\sigma_2 - \overline{S}_2}_2 \leq \frac{\varepsilon}{4} + \Norm{(A_2 + 2^{-2} \Eye_d) \frac{3}{8}S_1 - ((A/2)^2 + 2^{-2} \Eye_d) \frac{3}{8}S_1}_2 \leq \frac{\varepsilon}{4} + \varepsilon \Norm{\frac{3}{8}S_1}_2 \leq \varepsilon \tag{1}
    \end{equation}
    where we used \eqref{power proximity}. Note how the factor $\frac{3}{8}$ allowed us to use the approximation properties of $\Pi_{d, d, d}^{\varepsilon / 4, 1}$ \eqref{multiplication proximity} and prove \eqref{neumann proximity} for $N = 2$ in (1). By the triangular inequality,
    \begin{equation}
        \Norm{\sigma_2}_2 \leq \varepsilon + \Norm{\overline{S}_2}_2 \leq \frac{1}{4} + \left ( \frac{1}{4} + \frac{1}{4} \right ) \frac{3}{4} \leq 1. \tag{2}
    \end{equation}
    With (1) and (2), we proved \eqref{neumann proximity} and \eqref{norm bound neumann} for $N = 2$.
    
    Now, suppose that \eqref{neumann proximity} and \eqref{norm bound neumann} hold for $N \in \N$, $N \geq 2$.

    By definition of $\sigma_{N+1}$,
    \[\sigma_{N+1} = \mat \left ( R \left ( \Pi_{d,d,d}^{\varepsilon/4,1} \right ) \left ( \vect \left ( A_{2^N} + 2^{-2^N} \Eye_d \right ), \vect (\sigma_N) \right ) \right )
    \]
    so
    \[
    \Norm{\sigma_{N+1}}_2 \leq \frac{\varepsilon}{4} + \Norm{\left ( A_{2^N} + 2^{-2^N} \Eye_d \right ) \sigma_N}_2 \leq \frac{1}{4} + \frac{1}{2} + 2^{-2^N} \leq \frac{1}{4} + \frac{1}{2} + \frac{1}{4} = 1
    \]
    because $\Norm{\sigma_N}_2\leq 1$ by induction hypothesis and \eqref{norm bound power approximation}. For \eqref{neumann proximity}, 
    \begin{align*}
        \Norm{\sigma_{N+1}-\overline{S}_{N + 1}}_2 \leq & \Norm{\sigma_{N+1} - \left ( A_{2^N}+2^{-2^N}\Eye_d\right)\sigma_N}_2
        \\
        & + \Norm{\left ( A_{2^N} + 2^{-2^N} \Eye_d \right ) \sigma_N-\overline{S}_{N+1}}_2\\
        \leq & \frac{\varepsilon}{4} + \Norm{\left ( A_{2^N} + 2^{-2^N} \Eye_d \right ) \sigma_N - \left ( \left ( A / 2 \right )^{2^N} + 2^{-2^N}\Eye_d \right ) \overline{S}_N}_2\\
        \leq & \frac{\varepsilon}{4} + \Norm{\left ( A_{2^N} + 2^{-2^N} \Eye_d \right ) \sigma_N - \left ( A_{2^N} + 2^{-2^N} \Eye_d \right ) \overline{S}_N}_2\\
        & + \Norm{\left ( A_{2^N} + 2^{-2^N} \Eye_d \right ) \overline{S}_N - \left( \left ( A / 2 \right )^{2^N} + 2^{-2^N} \Eye_d \right ) \overline{S}_N}_2\\
        \leq & \frac{\varepsilon}{4} + \Norm{A_{2^N} + 2^{-2^N} \Eye_d}_2 \varepsilon + \Norm{\overline{S}_N}_2 \varepsilon \leq \frac{\varepsilon}{4} + \frac{\varepsilon}{2} + \frac{3}{8} \varepsilon \leq \varepsilon
    \end{align*}
    where we used the induction hypothesis \eqref{neumann proximity}, \eqref{power proximity} and that, when $N \geq 2$,
    \begin{align*}
        \Norm{\overline{S}_N}_2 = & \Norm{\prod_{k = 1}^{N - 1} \left [ \left ( \frac{A}{2} \right )^{2^k} + 2^{-2^k} \Eye_d \right ] \left ( \frac{3}{8} S_1 \right )}_2 \\
        \leq & \prod_{k = 1}^{N - 1} \Norm{\left ( A / 2 \right )^{2^k} + 2^{-2^k} \Eye_d}_2 \Norm{\frac{3}{8}S_1}_2\\
        \leq& \frac{3}{4} \prod_{k = 1}^{N - 1} \frac{2}{2^{2^k}} = \frac{3}{4}\frac{2^{N - 1}}{2^{\sum_{k = 1}^{N - 1} 2^k}} = 3\frac{2^{N - 1}}{2^{2^N}} \leq \frac{3}{8}
    \end{align*}
    finishing the induction proof.

    Hence, if we call $C(N)\coloneqq 2^{2^{N} + 1} / 3$ and
    \begin{equation}\label{psi2 def}
        \widetilde{\psi}_{d, N}^\varepsilon \coloneqq W^{-1}_{\ReLU}((C(N) \Eye_{d^2}, 0)) \bullet \Pi_{d, d, d}^{\varepsilon / 4, 1} \bullet W^{-1}_{\ReLU} \left ( \left ( \Eye_{2 d^2}, \begin{bmatrix*} 2^{-2^{N - 1}} \vect (\Eye_d) \\ 0 \end{bmatrix*} \right ) \right ),
    \end{equation}
    the target NN is then
    \begin{equation}\label{eq: Sigma def}
        \Sigma_{d,N}^\varepsilon \coloneqq \widetilde{\psi}_{d, N}^{\varepsilon / C(N)}  \odot \Phi_{d,N-1}^{\varepsilon/C(N)}
    \end{equation}
    approximates $\sum\limits_{k=0}^{2^N-1}A^k$ for any matrix $A$ with norm 1 or less. 
    
    For the next step, we call
    \[
    m(\varepsilon)\coloneqq \lr{\lfloor}{\frac{1}{2}\log_2\lr{(}{\frac{2d^2}{\varepsilon}}{)}}{\rfloor}.
    \]
    which $m(\varepsilon) \geq 3$ for $\varepsilon < 1/4$.

    \underline{Step 2:} bounds of number of layers of $\Sigma_{d, N}^\varepsilon$.

    To do so, we work first with the building blocks of $\Sigma_{d, N}^\varepsilon$.

    By the definitions of $\varphi_d^\varepsilon$ \eqref{varphi def} and Definition \ref{concatenation def}, the Proposition \ref{bounds para concat} and the number of layers of $\Pi_{d, d, d}^{\varepsilon, 1}$ in Proposition \ref{Matrix multiplication NN},
    \begin{equation}\label{eq: number layers varphi}
    L(\varphi_d^\varepsilon) = L(\Pi_{d, d, d}^{\varepsilon, 1}) = m(\varepsilon) + 1.
    \end{equation}
    The same strategy can be used to check that
    \begin{equation}\label{eq: number layers psi}
    L(\psi_{d, k}^\varepsilon) = L(\Pi_{d, d, d}^{\varepsilon / 4, 1}) = m(\varepsilon / 4) + 1,
    \end{equation}
    \begin{equation}\label{eq: number layers psi2}
        L(\widetilde{\psi}_{d, N}^\varepsilon) = L(\Pi_{d, d, d}^{\varepsilon / 4}) = m(\varepsilon / 4) + 1
    \end{equation}
    which follows from its definitions \eqref{psi def} and \eqref{psi2 def}.
    Using then \eqref{eq: number layers varphi} and \eqref{eq: number layers psi} plus the definition of $\Phi_{d, N - 1}^\varepsilon$ \eqref{Phi def}, the Proposition \ref{bounds para concat} tells us
    \begin{equation}\label{eq: number layers Phi}
        L(\Phi_{d, N - 1}^\varepsilon) = (N - 2)(m(\varepsilon / 4) + 1) + m(\varepsilon).
    \end{equation}
    By the definition of $\Sigma_{d, N}^\varepsilon$ \eqref{eq: Sigma def}, the number of layers of $\widetilde{\psi}_{d, N - 1}^\varepsilon$ \eqref{psi2 def} and the last Equation \eqref{eq: number layers Phi}, we conclude that
    \begin{equation}\label{eq: number layers Sigma}
        L(\Sigma_{d, N}^\varepsilon) = (N - 1)(m(\varepsilon / 4) + 1) + m(\varepsilon) = N (m(\varepsilon) + 2) - 2
    \end{equation}
    where we used $m(\varepsilon / 4) = m(\varepsilon) + 1$.

    \underline{Step 3:} bounds of number of weights of $\Sigma_{d,N}^\varepsilon$.

    To do so, we work first with the building blocks of $\Sigma_{d,N}^\varepsilon$.

    Using Propositions \ref{bounds para concat}, \ref{Matrix multiplication NN}, the properties of the Lemma \ref{norm 0 prop} and the definition of $\varphi_d^\varepsilon$ \eqref{varphi def}, we obtain at the first layer:
    \begin{equation}\label{bounds varphi M1}
        \begin{split}
            M_1(\varphi^\varepsilon_d) \leq & M_1 \lr{(}{\Pi^{\varepsilon, 1}_{d,d,d} \bullet W^{-1}_{\ReLU} \lr{(}{\lr{(}{\frac{1}{2} \begin{bmatrix*} \Eye_{d^2} \\ \Eye_{d^2} \end{bmatrix*}, 0}{)}}{)}}{)}\\
            & + M_1 \left ( W^{-1}_{\ReLU} \left ( \left ( \frac{1}{2} \Eye_{d^2}, \frac{1}{2} \vect ( \Eye_d ) \right ) \right ) \bullet \phi^{\Id}_{d^2,L(\Pi_{d,d,d}^{\varepsilon,1})}\right)\\
            \leq& M_1 ( \Pi_{d,d,d}^{\varepsilon,1}) + M_1 (\phi^{\Id}_{d^2,L(\Pi_{d,d,d}^{\varepsilon,1})})\leq 2d^2+8d^3,
        \end{split}
    \end{equation}
    at the last layer:
    \begin{equation}\label{bounds varphi ML}
        \begin{split}
            M_{L(\varphi_d^\varepsilon)}(\varphi_d^\varepsilon) =& M_{L(\Pi_{d,d,d}^{\varepsilon,1})} \left( \Pi_{d,d,d}^{\varepsilon,1} \bullet W^{-1}_{\ReLU} \lr{(}{\lr{(}{\frac{1}{2}\begin{bmatrix*}
                \Eye_{d^2}\\ \Eye_{d^2}
            \end{bmatrix*},0}{)}}{)} \right)\\
            & + M_{L(\Pi_{d,d,d}^{\varepsilon,1})} \left ( W^{-1}_{\ReLU} \left ( \left ( \frac{1}{2} \Eye_{d^2}, \frac{1}{2} \vect(\Eye_d) \right ) \right ) \bullet \phi^{\Id}_{d^2,L(\Pi_{d,d,d}^{\varepsilon,1})}\right)\\
            \leq& 8d^3+2d^2+d,
        \end{split}
    \end{equation}
    and in general:
    \begin{equation}\label{bounds varphi M}
        \begin{split}
            M(\varphi_d^\varepsilon)=&M\lr{(}{\Pi^{\varepsilon,1}_{d,d,d}\bullet W^{-1}_{\ReLU}\lr{(}{\lr{(}{\frac{1}{2}\begin{bmatrix*}
                \Eye_{d^2}\\ \Eye_{d^2}
            \end{bmatrix*},0}{)}}{)}}{)}\\
            &+M\left( W^{-1}_{\ReLU} \left(\left(\frac{1}{2}\Eye_{d^2},\frac{1}{2}\vect(\Eye_d)\right)\right)\bullet\phi^{\Id}_{d^2,L(\Pi_{d,d,d}^{\varepsilon,1})}\right)\\
            \leq& d^3(30m(\varepsilon)-28)+2L(\Pi_{d,d,d}^{\varepsilon,1}))d^2+d\\
            =&d^3(30m(\varepsilon)-28)+2(m(\varepsilon)+1)d^2+d.
        \end{split}
    \end{equation}
    It is immediate to check that
    \begin{multline*}
    P(\Pi_{d,d,d}^{\varepsilon/4,1},\Pi_{d,d,d}^{\varepsilon/4,1}) \bullet W^{-1}_{\ReLU} \left(\left(\begin{bmatrix*}
        \Eye_{d^2} & 0\\
        \Eye_{d^2} & 0\\
        \Eye_{d^2} & 0\\
        0 & \Eye_{d^2}
    \end{bmatrix*}, \begin{bmatrix*}
        0\\0\\ 2^{-2^k}\vect\left( \Eye_d\right)\\0
    \end{bmatrix*}\right)\right)\\= P\left(\Pi_{d,d,d}^{\varepsilon/4,1}\bullet W^{-1}_{\ReLU} \left(\left(\begin{bmatrix*}
        \Eye_{d^2}&0\\ \Eye_{d^2}&0
    \end{bmatrix*},0\right)\right), \right . \\ \left . \Pi_{d,d,d}^{\varepsilon/4,1}\bullet W^{-1}_{\ReLU} \left(\left(\Eye_{2 d^2},\begin{bmatrix*}
        2^{-2^{-k}}\vect(\Eye_d)\\0
    \end{bmatrix*}\right)\right)\right)
    \end{multline*}
    which implies, if we set $\Pi^{\varepsilon/4,1}_{d,d,d}=((A_1,b_1),\ldots, (A_{L(\Pi^{\varepsilon/4,1}_{d,d,d})},b_{L(\Pi^{\varepsilon/4,1}_{d,d,d})}))$, by the definition of $\psi_{d,k}^\varepsilon$ \eqref{psi def} and Proposition \ref{Matrix multiplication NN},
    \begin{equation}\label{M1 bound psi}
        \begin{split}
            M_1(\psi_{d,k}^\varepsilon)=&\Norm{A_1\begin{bmatrix*}
        \Eye_{d^2}&0\\ \Eye_{d^2}&0
    \end{bmatrix*}}_0+\Norm{b_1}_0 \\&+ \Norm{A_1\begin{bmatrix*}
        \Eye_{d^2}&0\\0&\Id_{d^2}
    \end{bmatrix*}}_0 + \Norm{A_1\begin{bmatrix*}
        2^{-2^{-k}}\vect(\Eye_d)\\0
    \end{bmatrix*}+b_1}_0\\
    \leq& 2(\Norm{A_1}_0+\Norm{b_1}_0)+\Norm{A_1}_0\leq 3M_1(\Pi_{d,d,d}^{\varepsilon,1})\leq 24d^3.
        \end{split}
    \end{equation}
    If follows, again, from Proposition \ref{bounds para concat} and \ref{Matrix multiplication NN} and the definition of $\psi_{d,k}^\varepsilon$ \eqref{psi def} that the number of weights at the last layer is:
    \begin{equation}\label{psi bounds ML}
        \begin{split}
            M_{L(\psi_{d,k}^\varepsilon)}(\psi_{d,k}^\varepsilon) =& M_{L(\Pi^{\varepsilon/4,1}_{d,d,d})}\left(\Pi_{d,d,d}^{\varepsilon/4,1}\bullet W^{-1}_{\ReLU} \left(\left(\begin{bmatrix*}
                \Eye_{d^2}&0\\ \Eye_{d^2}&0
            \end{bmatrix*},0\right)\right)\right)\\
                & + M_{L(\Pi^{\varepsilon/4,1}_{d,d,d})} \left ( \Pi_{d,d,d}^{\varepsilon/4,1} \bullet W^{-1}_{\ReLU} \left(\left( \Eye_{2 d^2}, \begin{bmatrix*}
                2^{-2^{-k}}\vect(\Eye_d)\\0
            \end{bmatrix*}\right)\right)\right)\\
            =& 2M_{L(\Pi^{\varepsilon/4,1}_{d,d,d})}\left(\Pi_{d,d,d}^{\varepsilon/4,1}\right)\leq 16d^3
        \end{split}
    \end{equation}
    and at the whole network:
    \begin{equation}\label{psi bounds M}
        \begin{split}
            M(\psi_{d,k}^\varepsilon) =&M\left(\Pi_{d,d,d}^{\varepsilon/4,1}\bullet W^{-1}_{\ReLU} \left(\left(\begin{bmatrix*}
            \Eye_{d^2}&0\\ \Eye_{d^2}&0
            \end{bmatrix*},0\right)\right)\right)\\
            &+M \left ( \Pi_{d,d,d}^{\varepsilon / 4,1} \bullet  W^{-1}_{\ReLU} \left ( \left ( \Eye_{2 d^2}, \begin{bmatrix*} 2^{-2^{-k}} \vect(\Eye_d) \\ 0 \end{bmatrix*} \right ) \right ) \right ) \\
            \leq& 2 M \left ( \Pi_{d, d, d}^{\varepsilon / 4, 1} \right ) + M_1 \left ( \Pi_{d, d, d}^{\varepsilon / 4, 1} \right )\\
            \leq& 2 d^3 (30 m(\varepsilon / 4) - 28) + 8 d^3 = 2 d^3 (30 m(\varepsilon / 4) - 24).
        \end{split}
    \end{equation}
    We get just as in \eqref{psi bounds ML}, \eqref{psi bounds M} that the number of weights at the first layer is
    \begin{equation}\label{psi2 bounds M1}
        M_1(\widetilde{\psi}_{d,N}^\varepsilon) \leq 2M_1(\Pi_{d,d,d}^{\varepsilon/4,1})\leq 16d^3,
    \end{equation}
    at the last layer
    \begin{equation}\label{psi2 bounds ML}
        M_{L(\widetilde{\psi}_{d,N}^\varepsilon)} \leq M_{L(\Pi_{d,d,d}^{\varepsilon/4,1})}(\Pi_{d,d,d}^{\varepsilon/4,1}) \leq 8d^3,
    \end{equation}
    and at the whole network
    \begin{equation}\label{psi2 bounds M}
        M(\widetilde{\psi}_{d,N}^\varepsilon) \leq M(\Pi_{d,d,d}^{\varepsilon/4,1})+M_1(\Pi_{d,d,d}^{\varepsilon/4,1})\leq d^3(30m(\varepsilon/4)-20).
    \end{equation}
    It is then routine using the Proposition \ref{bounds para concat}, the definition of $\psi_{d, N-1}^\varepsilon$ \eqref{Phi def} and the combination of the previous equations \eqref{bounds varphi M1}--\eqref{psi2 bounds M} to check that the number of weights at last layers is
    \begin{equation}\label{phi bounds M1}
        M_{L(\Phi_{d,N-1}^\varepsilon)}(\Phi_{d,N-1}^\varepsilon)= M_{L(\psi_{d,N-2}^\varepsilon)}(\psi_{d,N-2}^\varepsilon)\leq16d^3
    \end{equation}
    and at the whole network
    \begin{equation}\label{phi bounds M}
        \begin{split}
            M(\Phi_{d,N-1}^\varepsilon) \leq& 2d^3(30m(\varepsilon/4)-24)(N-2)+d^3(30m(\varepsilon)-28)\\&+2d^2(m(\varepsilon)+1)+2d.
        \end{split}
    \end{equation}
    Therefore, knowing that $m(\varepsilon/4) = m(\varepsilon)+1$ and recalling \eqref{psi2 bounds M1}--\eqref{psi2 bounds M} together with the Proposition \ref{bounds para concat}, the weights are
    \begin{equation}\label{Sigma bounds M}
        \begin{split}
            M(\widetilde{\psi}_{d,N-1}^\varepsilon\odot\Phi_{d,N-1}^\varepsilon)\leq& M(\widetilde{\psi}_{d,N-1}^\varepsilon)+M(\Phi_{d,N-1}^\varepsilon)+M_1(\widetilde{\psi}_{d,N-1}^\varepsilon)+M_{L(\Phi_{d,N-1}^\varepsilon)}(\Phi_{d,N-1}^\varepsilon)\\
            \leq& d^3(30m(\varepsilon/4)-20)+2d^3(30m(\varepsilon/4)-24)(N-2)\\
            &+d^3(30m(\varepsilon)-28)+2d^2(m(\varepsilon)+1)+2d+16d^3+16d^3\\
            =&m(\varepsilon)(60d^3(N-1)+2d^2)+d^3(12N-10)+2d^2+2d,
        \end{split}
    \end{equation}
    and number of layers is
    \begin{equation}\label{Sigma bounds L}
        L(\widetilde{\psi}_{d,N-1}^\varepsilon\odot\Phi_{d,N-1}^\varepsilon)= (N-1)(m(\varepsilon/4)+1)+m(\varepsilon)=N(m(\varepsilon)+2)-2.
    \end{equation}
    Finally, we conclude that, using the above Equations \eqref{Sigma bounds M}--\eqref{Sigma bounds L} and \eqref{M1 bound psi}, for $N\geq 2$, the input and output dimensions are 
    \begin{equation}\label{Sigma1 bouonds dims}
        \dim_{\txt{in}}(\Sigma_{d,N}^\varepsilon) =\dim_{\txt{out}}(\Sigma_{d,N}^\varepsilon)= d^2,
    \end{equation}
    the number of weights at the first layer
    \begin{equation}\label{Sigma1 bounds M1}
        M_1(\Sigma_{d,N}^\varepsilon) = M_1(\varphi_d^{\varepsilon/C(N)})\leq 2d^2+8d^3,
    \end{equation}
    and the number of weights at the whole network
    \begin{equation}\label{Sigma1 bounds M}
        M(\Sigma_{d,N}^\varepsilon) \leq m(\varepsilon/C(N))(60d^3(N-1)+2d^2)+d^3(12N-10)+2d^2+2d,
    \end{equation}
    where $C(N)=2^{2^N+1}$ and
    \[
    m(\varepsilon/C(N))=\Floor{\frac{1}{2}\log_2\left(\frac{2C(N)d^2}{\varepsilon}\right)} = 2^{N-1} + 1 + \Floor{\frac{1}{2}\log_2\left(\frac{d^2}{\varepsilon}\right)}.
    \]

    \underline{Step 4:} From the Neumann series to matrix inversion.
    
    Let $\delta \in [0, 1)$, $B\in\mathcal{I}_d(\alpha, \delta)$ and $A\coloneqq \Eye_d - \alpha B$. It can be proved that if \\ $N\geq \log_2\left(\max\left\{\log_\delta\left((1-\delta)\varepsilon\right), 2\right\}\right)$ then
    \[
    \Norm{(\Eye_d - A)^{-1}-\sum_{k=0}^{2^N-1} A^k}_2\leq\frac{\delta^{2^N}}{1-\delta}\leq\varepsilon 
    \]
    as seen in \eqref{neumann error}. We define then
    \begin{align*}
        N(\varepsilon, \delta) & \coloneqq \Ceil{\log_2 \left ( \max \left \{ \frac{\log_2((1 - \delta) \varepsilon)}{\log_2(\delta)}, 2 \right \} \right ) }, \\
        n(\varepsilon, \delta) & \coloneqq 2^{N(\varepsilon, \delta) - 1} + 1 + \Floor{\frac{1}{2}\log_2 \left ( \frac{d^2}{\varepsilon} \right )}
    \end{align*}
    and the target NN $\Upsilon^{\varepsilon,\alpha}_{d,\delta}$:
    \[
    \Upsilon^{\varepsilon,\alpha}_{d,\delta}\coloneqq W^{-1}_{\ReLU}  ((\alpha\Eye_{d^2},0)) \bullet \Sigma_{d,N(\epsilon,\delta)}^{\varepsilon/\alpha}\bullet W^{-1}_{\ReLU} ((-\alpha\Eye_{d^2}, \vect(\Eye_d))).
    \]
    We conclude as always (applying the Proposition \ref{bounds para concat}, \eqref{eq: number layers Sigma} and \eqref{Sigma1 bouonds dims}--\eqref{Sigma1 bounds M}) that the dimensions of input and output are
    \begin{equation*}
        \dim_{\txt{in}}(\Upsilon^{\varepsilon,\alpha}_{d,\delta}) = \dim_{\txt{out}}(\Upsilon^{\varepsilon,\alpha}_{d,\delta})= d^2,
    \end{equation*}
    the number of weights is
    \begin{equation*}
        \begin{split}
            M(\Upsilon^{\varepsilon,\alpha}_{d,\delta})\leq& M(\Sigma_{d,N(\varepsilon/\alpha	,\delta)}^{\varepsilon/\alpha})+M_1(\Sigma_{d,N(\varepsilon/\alpha,\delta)}^{\varepsilon/\alpha})\\
            \leq& n(\varepsilon/\alpha,\delta)(60d^3(N(\varepsilon/\alpha,\delta)-1)+2d^2)+d^3(12N(\varepsilon/\alpha,\delta)-2)+4d^2+2d,
        \end{split}
    \end{equation*}
    and the number of layers are
    \begin{equation*}
        L(\Sigma_{d,N}^\varepsilon) = N(\varepsilon/\alpha,\delta)(n(\varepsilon/\alpha,\delta)+2)-2.
    \end{equation*}
\end{proof}

\section{Other Ways of Solving PDEs.}

As discussed though out all this chapter, Theorem \ref{matrix inversion NN} implies that NN can approximate solutions of elliptic PDEs. We have done this by using the Galerkin method but there are other possible strategies.

Within those strategies, instead of approximating individual solutions or families of solutions, we can approximate the resolver function with DeepONets or Neural Operators as described in \ref{sec: comments and other results chap 0}. In \cite{lu2019deeponet}, a density theorem for DeepONets states that continuous operators can be approximated by DeepONets, and since some resolvers are continuous, this justifies the use of this architecture. For Neural Operators, density results for a variety of operators between spaces of function are given in \cite{kovachki2021universal} which again shows the potential of this kind operators.

Back to NN, the most common approach to solve PDEs numerically is using PINNs. Let us consider a PDE
\[
\begin{cases}
    F(u) = 0 & \txt{ in } \Omega \\
    G(u) = 0 & \txt{ in } \partial \Omega
\end{cases},
\]
where $F$ and $G$ are differential operators, the method consist of choosing a $\varrho$-NN $\phi$ whose parameters minimize
\[
\int_\Omega \Abs{F(R(\phi))}^2 + \int_{\partial \Omega} \Abs{G(R(\phi))}^2
\]
while $\varrho$ is chosen so $R(\phi)$ has enough regularity for $F$ and $G$. This idea is tested in \cite{raissi2019physics} for classical one dimension PDEs (like Burger's and Schrodinger's equations).
\parindent=0em
\chapter{Approximation Space of Neural Networks}
\label{chap: space NN}

In this chapter, we use as a guide \cite{gribonval2022approximation} to introduce the approximation space of NN. These spaces consist of all the functions in $L^p$ that can be approximated at fixed rate when allowing bigger NNs. By showing the density of NNs in $L^p$ based on \cite{leshno1993multilayer} (see Remark \ref{remark: density theorem changes}), we get that these spaces is a quasi-normed space. Another proven property is the embedding of the Besov spaces in the approximations spaces, in particular, which the smoother a function is in the Besov sense, the faster it can be approximated.

\section{Introductory Notions.}

\begin{deff}[Quasi and Semi Norms]\label{def: quasi and semi norm}
    Let $X$ be a (real) vector space and
    \[
    \func{\Norm{\; \cdot \;}}{X}{[0, \infty)}{x}{\Norm{x}}
    \]
    We say $\Norm{\; \cdot \;}$ is a \emph{quasi-norm} if
    \begin{enumerate}
        \item $\Norm{x} = 0$ implies $x = 0$
        \item $\Norm{\lambda x} = \Abs{\lambda} \Norm{x}$ for all $x \in X$ and $\lambda \in \R$
        \item there exists $C > 1$ such that
        \[
        \Norm{x + y} \leq C ( \Norm{x} + \Norm{y} )
        \]
        for all $x, y \in X$.
    \end{enumerate}
    If such a function exists, we say that $(X, \Normm)$ (or $X$ simply) is a \emph{quasi-normed space}.
    
    We say $\Norm{\; \cdot \;}$ is a \emph{seminorm} if it satisfies b). and is subadditive, that is, satisfies the triangular inequality
    \begin{enumerate}
        \item[d).]  $\Norm{x + y} \leq \Norm{x} + \Norm{y}$ for all $x, y \in X$.
    \end{enumerate}
    We say $\Norm{\; \cdot \;}$ is a \emph{quasi-seminorm} if satisfies b). and c).
\end{deff}

Note that if a map $\Norm{\; \cdot \;}: X \longrightarrow [0, \infty)$ satisfies \textit{a)., b)., c).} with $C \leq 1$, then $\Norm{\; \cdot \;}$ is a norm when $C = 1$ and $X = \set{0}$ otherwise. Also, a norm is specifically a seminorm, a quasi-norm, and a quasi-seminorm. A quasi-norm induces a distance and therefore a topology, by considering the power of an equivalent quasi-norm (see Theorem 1.1 of Chapter 2 in \cite{devore1993constructive}).

\begin{deff}[Quasi-Banach Spaces]\label{def: quasi-Banach}
    Let $(X, \Normm)$ be a quasi-normed space. We say $(X, \Normm)$ (or $X$ simply) is a quasi-Banach space if $X$ with the induced topology of the quasi-norm $\Normm$ is a complete space.
\end{deff}

In this chapter we study the capacity of NN to approximate functions on $L^p(\Omega)^k$. We recall that, given a Lebesgue-measurable set $\Omega \subset \R^{d}$ and $p \in (0, \infty)$, 
\begin{multline*}
L^p(\Omega)^k \coloneqq \left \{ (f_1, \dots, f_k): \Omega \longrightarrow \R^k \; \middle | \; f_i \txt{ is Lebesgue measurable and } \phantom{\int} \right . \\ \left . \int_\Omega \Abs{f_i(x)}^p dx < \infty \; i = 1, \dots, k \right \} / \txt{a.e}
\end{multline*}
\begin{multline*}
    L^\infty(\Omega)^k \coloneqq \left \{ (f_1, \dots, f_k): \Omega \longrightarrow \R^k \; \middle | \; f_i \txt{ is Lebesgue measurable and } \right . \\ \left . \exists M > 0: \Abs{f_i} < M \txt{ a.e for } i = 1, \dots, k \right \} / \txt{a.e.}
\end{multline*}
where a.e. is the equivalence relation defined as follows:

given two measurable functions $f, g: \Omega \longrightarrow \R^k$, we denote $f = g$ a.e if the set \\ $\{x \in \Omega \; | \; f(x) \neq g(x) \}$ has Lebesgue measure 0.

These spaces are accompanied by the maps
\begin{enumerate}
    \item for $p \in (0, \infty)$,
    \[
    \func{\Norm{\; \cdot \;}_{L^p(\Omega)^k}}{L^p(\Omega)^k}{\R}{(f_1, \dots, f_k)}{\left ( \sum_{i = 1}^k \int_\Omega \Abs{f_i(x)}^p dx \right )^\frac{1}{p}}
    \]

    \item for $p = \infty$,
    \[
    \func{\Normm_{L^\infty(\Omega)^k}}{L^\infty(\Omega)^k}{\R}{(f_1, \dots, f_k)}{\max_{i = 1, \dots, k} \essup_{x \in \Omega} \Abs{f_i(x)}}
    \]
    where
    \[
    \essup_{x \in \Omega} f(x) = \inf \set{ M \in \R \st \Abs{\set{ x \in \Omega \st f(x) \leq M}}= 0 },
    \]
\end{enumerate}

which are quasi-norms when $p \in (0, 1)$ and norms if $p \in [1, \infty]$ (see \cite{folland1999real} for a proof).

To keep notation simple, we will use, if there is no room for confusion, 
\[
\Norm{f}_p = \Norm{f}_{L^p(\Omega)^k}.
\]
for any function $f: \Omega \longrightarrow \R^k$.

The next definition allows us to study the relations between different spaces.

\begin{deff}[Embbeding]\label{def: embedding}
    Let $(X, \Normm_X)$ and $(Y, \Normm_Y)$ be two quasi-normed spaces. We say that $X$ is embedded in $Y$, and we write $X \hookrightarrow Y$, if there exists a linear, injective and continuous map $J: X \longrightarrow Y$.
\end{deff}

\begin{proposition}
    Let $T:X \longrightarrow Y$ be linear function between quasi-normed spaces $(X, \Normm_X)$ and $(Y, \Normm_Y)$. Then $T$ is continuous if and only if there is a constant $C > 0$ such that
    \[
    \Norm{Tx}_Y \leq C \Norm{x}_X
    \]
    for every $x \in X$.
\end{proposition}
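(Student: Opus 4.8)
The plan is to prove the standard equivalence ``continuous $\iff$ bounded'' for linear maps, in the quasi-normed setting, being slightly careful because a quasi-norm need not be continuous for its own topology and the quasi-triangle inequality has the constant $C > 1$. The two implications are essentially independent, so I would treat them separately.

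For the easy direction, assume such a $C > 0$ exists. First I would recall (from Theorem 1.1 of Chapter 2 in \cite{devore1993constructive}, already cited in the excerpt) that the quasi-norm $\Norm{\; \cdot \;}_X$ induces a translation-invariant metrizable topology in which the quasi-balls $\set{x \in X \st \Norm{x}_X < r}$, $r > 0$, form a neighbourhood basis at $0$; the same holds for $Y$. Then, to show $T$ is continuous it suffices to show it is continuous at $0$ (using linearity and translation invariance: $T(x) - T(x_0) = T(x - x_0)$). Continuity at $0$ is immediate from the bound: if $\Norm{x}_X < r / C$ then $\Norm{Tx}_Y \leq C \Norm{x}_X < r$, so $T$ maps a quasi-ball around $0$ into any prescribed quasi-ball around $0 = T(0)$.

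For the converse, assume $T$ is continuous, hence continuous at $0$. Since the quasi-balls form a neighbourhood basis, there is $\delta > 0$ such that $\Norm{x}_X \leq \delta$ implies $\Norm{Tx}_Y \leq 1$. Now fix $x \in X$ with $x \neq 0$ and set $y \coloneqq \frac{\delta}{\Norm{x}_X} x$; by property b). of Definition \ref{def: quasi and semi norm}, $\Norm{y}_X = \delta$, so $\Norm{Ty}_Y \leq 1$. Using linearity and b). again in $Y$,
\[
\frac{\delta}{\Norm{x}_X} \Norm{Tx}_Y = \Norm{Ty}_Y \leq 1,
\]
hence $\Norm{Tx}_Y \leq \frac{1}{\delta} \Norm{x}_X$. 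For $x = 0$ the inequality is trivial since $T$ is linear, so $C \coloneqq 1/\delta$ works for all $x \in X$.

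I expect the only genuine subtlety — and thus the ``main obstacle'' to state cleanly rather than to overcome — is the topological bookkeeping: justifying that the quasi-balls are a neighbourhood basis and that the topology is translation invariant, so that continuity reduces to continuity at $0$ and to a single $\delta$-ball estimate. Once that is quoted from \cite{devore1993constructive}, both implications are short scaling arguments using only homogeneity (property b).), and the quasi-triangle constant $C > 1$ never actually enters.
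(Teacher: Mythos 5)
The paper states this proposition without giving a proof at all — it is left as a cited/assumed fact, with the topology on a quasi-normed space delegated to Theorem~1.1 of Chapter~2 in \cite{devore1993constructive}. Your argument is correct and is the standard ``continuous $\iff$ bounded'' proof for linear maps, transplanted to the quasi-normed setting. You rightly isolate the only genuine point that needs care: that the quasi-balls $\set{x \st \Norm{x}_X < r}$ do form a neighbourhood basis at $0$ for the (metrizable, translation-invariant) topology the paper uses. That is where the cited theorem is actually doing work, since the topology is built from an equivalent $p$-norm rather than from $\Normm_X$ directly; once that is accepted, both directions reduce, as you say, to pure scaling via homogeneity, and the quasi-triangle constant never appears. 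One small cosmetic point: in the converse direction you may want to take $y$ of quasi-norm strictly less than $\delta$ (e.g.\ $y = \tfrac{\delta}{2\Norm{x}_X}\,x$) if your chosen neighbourhood is an \emph{open} quasi-ball, or else note that closed quasi-balls are also neighbourhoods of $0$; either fix is immediate and does not affect the conclusion.
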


\begin{remark}
    Maintaining notation of the above Definition \ref{def: embedding}, we can identify $X$ and $J(X)$ where $J(X)$ is equipped with the induced quasi-norm $\Norm{\; \cdot \;}_J = \Norm{\; \cdot \;}_X \circ J^{-1}$. We can then assume, without loosing generality, that $X \subset Y$ and $J = \Eye|_X$ where $\Eye: Y \longrightarrow Y$ is the identity.
\end{remark}

For the propose of establishing an order relationships quasinorms, we introduce the following relations between two functions $f, g: X \longrightarrow \R$ where $X$ is arbitrary:
\begin{enumerate}
    \item[$\bullet$] $f \sleq g$ if there exists $C > 0$ such that $f(x) \leq C g(x)$ for all $x \in X$.
    \item[$\bullet$] $f \sgeq g$ if there exists $C > 0$ such that $C f(x) \geq g(x)$ for all $x \in X$.
    \item[$\bullet$] $f \approx g$ if there exists $C, c > 0$ such that $c f(x) \leq g(x) \leq C f(x)$ for all $x \in X$.
\end{enumerate}

\section{Approximation Spaces.}

Let $X$ be a quasi-Banach space with $\Norm{\; \cdot \;}_X$ its quasi-norm. Given $\Omega \subset X$, we define
\[
E(f, \Omega)_X \coloneqq \inf_{g \in \Omega} \Norm{f - g}_X
\]

\begin{deff}[Approximation Space]
    Let $X$ be a quasi-Banach space and $\Sigma = (\Sigma_n)_{n = 0}^\infty$ a family of subsets $\Sigma_n \subset X$ for $n = 0, 1, \dots$ Consider, for $\alpha	> 0$ and $q \in (0, \infty]$, the map defined for every $f \in X$ as 
    \[
    \Norm{f}_{A^\alpha_q(X, \Sigma)} \coloneqq \begin{dcases}
        \left ( \sum_{n = 1}^\infty \left ( n^\alpha E(f, \Sigma_{n - 1})_X \right )^q \frac{1}{n} \right )^\frac{1}{q} & \txt{ if } q \in (0, \infty)\\
        \sup_{n \in \N} n^\alpha E(f, \Sigma_{n - 1})_X & \txt{ if } q = \infty
    \end{dcases}.
    \]
    Then, we call the \emph{approximation class} to the set
    \[
    A^\alpha_q(X, \Sigma) \coloneqq \left \{ f \in X \; \middle | \; \Norm{f}_{A^\alpha_q(X, \Sigma)} < \infty \right \}.
    \]
\end{deff}

\begin{proposition}
    Let $X$ be a quasi-Banach space and $\Sigma = (\Sigma_n)_{n = 0}^\infty$ a family of subsets $\Sigma_n \subset X$ for all $n \in \N \cup \set{0}$ such that
    \begin{enumerate}[label={(P\arabic*)}]
        \item \label{enu: Sigma0 0}
            $\Sigma_0 = \{0\}$;

        \item \label{enu: Increasing Sigma}
            $\Sigma_{n - 1} \subset \Sigma_{n}$ for $n \in \N$;

        \item \label{enu: Scalar closeness}
            $a \cdot \Sigma_n = \Sigma_n$ for all $a \in \R \setminus \{0\}$ and $n \in \N_0$;

        \item \label{enu: Additivity closeness}
            There is a fixed constant $c \in \N$ with $\Sigma_n + \Sigma_n \subset \Sigma_{cn}$
            for $n = 0, 1, \dots$;

        \item \label{enu: Sigmainfty density}
            $\Sigma_{\infty} := \bigcup\limits_{j = 0}^\infty \Sigma_j$ is dense in $X$.
    \end{enumerate}
    Then, $(A^\alpha_q(X, \Sigma), \Normm_{A^\alpha_q(X, \Sigma)})$ is quasi-Banch space satisfying $A^\alpha_q(X, \Sigma) \hookrightarrow X$ for all $\alpha > 0$ and $q \in (0, \infty]$.
\end{proposition}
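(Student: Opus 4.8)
The plan is to check in turn that $\Normm_{A^\alpha_q(X,\Sigma)}$ is a genuine quasi-norm and that the resulting space is complete; the embedding will come for free along the way. The key preliminary observation is that by \ref{enu: Sigma0 0} we have $E(f,\Sigma_0)_X=\Norm{f}_X$, so the term $n=1$ in the defining sum (respectively supremum) equals $\Norm{f}_X$; hence $\Norm{f}_X\le\Norm{f}_{A^\alpha_q(X,\Sigma)}$ for every $f\in X$. This single inequality already gives non-degeneracy ($\Norm{f}_{A^\alpha_q(X,\Sigma)}=0\Rightarrow\Norm{f}_X=0\Rightarrow f=0$, since $X$ is quasi-normed) and, once we know $A^\alpha_q(X,\Sigma)$ is a quasi-normed space, it shows that the inclusion $J=\Eye|_{A^\alpha_q(X,\Sigma)}$ into $X$ is continuous; linearity and injectivity of $J$ are immediate, so $A^\alpha_q(X,\Sigma)\hookrightarrow X$ in the sense of Definition \ref{def: embedding}. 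Absolute homogeneity $\Norm{\lambda f}_{A^\alpha_q(X,\Sigma)}=\Abs{\lambda}\Norm{f}_{A^\alpha_q(X,\Sigma)}$ follows from $E(\lambda f,\Sigma_n)_X=\Abs{\lambda}E(f,\Sigma_n)_X$ for $\lambda\neq 0$, which is the substitution $g\mapsto\lambda g$ inside the infimum, legitimate by \ref{enu: Scalar closeness}; the case $\lambda=0$ is clear because $0\in\Sigma_0\subset\Sigma_n$.

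The main obstacle is the quasi-triangle inequality, and precisely the index dilation $n\mapsto cn$ produced by \ref{enu: Additivity closeness}: near-optimal approximants of $f$ and of $g$ from $\Sigma_n$ only combine to an approximant of $f+g$ in $\Sigma_{cn}$, so one must absorb this shift inside a weighted $\ell^q$ norm. The standard remedy is to pass to a dyadic description. Using that $n\mapsto E(f,\Sigma_{n-1})_X$ is non-increasing (a consequence of \ref{enu: Increasing Sigma}), one proves the equivalence
\[
\Norm{f}_{A^\alpha_q(X,\Sigma)}\ \approx\ \Bigl(\textstyle\sum_{j\ge 0}\bigl(2^{j\alpha}E(f,\Sigma_{2^j-1})_X\bigr)^q\Bigr)^{1/q}
\]
(with the obvious modification for $q=\infty$), where the implied constants depend only on $\alpha$ and $q$; this is the one genuinely computational lemma. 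In this dyadic form, fixing $k\in\N$ with $2^k\ge c$, the estimate $E(f+g,\Sigma_{2^{j+k}-1})_X\le C_X\bigl(E(f,\Sigma_{2^j-1})_X+E(g,\Sigma_{2^j-1})_X\bigr)$, where $C_X>1$ is the quasi-norm constant of $X$, combined with the quasi-triangle inequality in $\ell^q$ and the bounded weight shift by $2^{k\alpha}$, yields $\Norm{f+g}_{A^\alpha_q(X,\Sigma)}\le C\bigl(\Norm{f}_{A^\alpha_q(X,\Sigma)}+\Norm{g}_{A^\alpha_q(X,\Sigma)}\bigr)$ for some $C=C(\alpha,q,c,C_X)>1$. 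This establishes that $A^\alpha_q(X,\Sigma)$ is a quasi-normed space, completing the embedding claim.

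For completeness, take a sequence $(f_m)$ that is Cauchy in $A^\alpha_q(X,\Sigma)$. By $\Norm{\cdot}_X\le\Norm{\cdot}_{A^\alpha_q(X,\Sigma)}$ it is Cauchy in $X$, hence converges to some $f\in X$. Extract a subsequence with $\Norm{f_{m_{j+1}}-f_{m_j}}_{A^\alpha_q(X,\Sigma)}\le 2^{-j}$; because for each fixed $n$ the map $h\mapsto E(h,\Sigma_{n-1})_X$ is $C_X$-Lipschitz for $\Norm{\cdot}_X$ (quasi-triangle inequality in $X$), we get $E(f-f_{m_{j_0}},\Sigma_{n-1})_X=\lim_{j}E(f_{m_j}-f_{m_{j_0}},\Sigma_{n-1})_X$ for every $n$. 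Applying Fatou's lemma to the counting-measure sum defining $\Norm{f-f_{m_{j_0}}}_{A^\alpha_q(X,\Sigma)}$ gives $\Norm{f-f_{m_{j_0}}}_{A^\alpha_q(X,\Sigma)}\le\liminf_{j}\Norm{f_{m_j}-f_{m_{j_0}}}_{A^\alpha_q(X,\Sigma)}$, and the right-hand side tends to $0$ as $j_0\to\infty$ by telescoping in the equivalent $\rho$-power quasi-norm mentioned after Definition \ref{def: quasi-Banach} (so that $\Norm{f_{m_j}-f_{m_{j_0}}}^{\rho}\le\sum_{i\ge j_0}2^{-i\rho}$). Thus $f\in A^\alpha_q(X,\Sigma)$ and $f_{m_{j_0}}\to f$ there; since $(f_m)$ is Cauchy, the whole sequence converges to $f$, proving that $A^\alpha_q(X,\Sigma)$ is a quasi-Banach space. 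I expect only the dyadic rechunking lemma and this telescoping bookkeeping to require care; property \ref{enu: Sigmainfty density} is not used in this argument and is recorded for later results.
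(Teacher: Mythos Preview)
The paper does not supply a proof of this proposition; it is stated without argument and is the standard result from approximation theory (see Chapter~7 of \cite{devore1993constructive}, which the paper cites elsewhere). So there is nothing to compare your attempt against.

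Your outline is the standard one and is essentially correct. The dyadic rechunking to absorb the index dilation from \ref{enu: Additivity closeness} is exactly how the quasi-triangle inequality is proved in the literature, and your completeness argument via Fatou on the counting measure is the usual route. One small inaccuracy: in a genuine quasi-normed space the map $h\mapsto E(h,\Sigma_{n-1})_X$ is \emph{not} Lipschitz, and the equality $E(f-f_{m_{j_0}},\Sigma_{n-1})_X=\lim_{j}E(f_{m_j}-f_{m_{j_0}},\Sigma_{n-1})_X$ need not hold. What you do have is
\[
E(f-f_{m_{j_0}},\Sigma_{n-1})_X\ \le\ C_X\,\Norm{f-f_{m_j}}_X+C_X\,E(f_{m_j}-f_{m_{j_0}},\Sigma_{n-1})_X,
\]
and taking $\liminf_j$ gives $E(f-f_{m_{j_0}},\Sigma_{n-1})_X\le C_X\liminf_j E(f_{m_j}-f_{m_{j_0}},\Sigma_{n-1})_X$, which is precisely the inequality Fatou needs (up to the harmless constant $C_X$). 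With that correction the argument goes through. Your remark that \ref{enu: Sigmainfty density} is not used is correct: density of $\Sigma_\infty$ plays no role in the quasi-Banach structure or the embedding; it is recorded for later use.
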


Even if $q \in [1, \infty]$, $X$ is a Banach space and $\Sigma$ satisfies \ref{enu: Sigma0 0}--\ref{enu: Sigmainfty density}, there is no guarantee that $(A^\alpha_q(X, \Sigma), \Normm_{A^\alpha_q(X, \Sigma)})$ is a space.

\begin{ex}
    Consider the Banach space $(\R^d, \Abs{\; \cdot \;}_p)$ for $d \geq 2$ and $p \in [1, \infty)$. We want to find an approximation class of this normed space that is not itself a normed space. Let $q \in [1, \infty)$, $\alpha > 0$,
    \[
    \Sigma_n = \set{ x \in \R^d \st \Norm{x}_0 \leq n }
    \]
    for all $n \in \N \cup \set{0}$ and $\Sigma = (\Sigma_n)_{n = 0}^\infty$.
    \ref{enu: Sigma0 0}--\ref{enu: Scalar closeness} are trivially satisfied. It is also immediate to check that $\Sigma_n = \R^d$ for $n \geq d$ and that if $x = (x_1, \dots, x_d) \in \R^d$ with $\Abs{x_{i_1}} \leq \ldots \leq \Abs{x_{i_d}}$ then
    \begin{equation}\label{eq: example error}
    E(x, \Sigma_n)_p = \inf_{y \in \Sigma_n} \Abs{x - y}_p = \left ( \sum_{k = 1}^{d - n} \Abs{x_{i_k}}^p \right )^\frac{1}{p}
    \end{equation}
    where an empty sum must be interpreted as 0. Moreover, from \eqref{eq: example error}, we conclude that $A^\alpha_q(\R^d, \Sigma) = \R^d$ since for every $x \in \R^d$
    \[
    \Norm{x}_{A^\alpha_q(\R^d, \Sigma)} = \left ( \sum_{n = 1}^\infty (n^\alpha E(x, \Sigma_{n - 1})_p)^q \frac{1}{n} \right )^\frac{1}{q} = \left ( \sum_{n = 1}^d (n^\alpha E(x, \Sigma_{n - 1})_p)^q \frac{1}{n} \right )^\frac{1}{q} < \infty.
    \]
    
    Consider $e_1 = (1, 0, \dots, 0), e_2 = (0, 1, 0, \dots, 0) \in \R^d$. Using \eqref{eq: example error}, we get
    \[
    E(e_1, \Sigma_n)_p = E(e_2, \Sigma_n)_p = \delta_{n, 0},
    \]
    and
    \[
    E(e_1 + e_2, \Sigma_n)_p = \begin{dcases}
        2^\frac{1}{p} & \txt{ if } n = 0 \\
        1 & \txt{ if } n = 1 \\
        0 & \txt{ if } n \geq 2
    \end{dcases}
    \]
    implying
    \[
    \Norm{e_1}_{A^\alpha_q(\R^d, \Sigma)} = \Norm{e_2}_{A^\alpha_q(\R^d, \Sigma)} = 1
    \]
    and
    \[
    \Norm{e_1 + e_2}_{A^\alpha_q(\R^d, \Sigma)} = \left ( 2^\frac{q}{p} + 2^{\alpha q - 1} \right )^\frac{1}{q}.
    \]
    If we define
    \[
    f(p) = \left ( 2^\frac{q}{p} + 2^{\alpha q - 1} \right )^\frac{1}{q},
    \]
    we get that $f(1) > 2$ and that $f$ is decreasing for every $q > 0$ and $\alpha > 0$. It can be seen that $2 \in f([1, \infty))$ if and only if
    \[
    \alpha < \beta(q) \coloneqq \frac{1 + \log_2(2^q - 1)}{q},
    \]
    concretely
    \[
    f(r(\alpha, q)) = f \left ( \frac{q}{\log_2(2^q - 2^{\alpha q - 1})} \right ) = 2.
    \]
    Therefore, the triangle inequality is not satisfied for $p \geq 1$ and $\alpha \geq \beta(q)$ or $p \in [1, r(\alpha, q))$ and $\alpha < \beta(q)$:
    \[
    \Norm{e_1}_{A^\alpha_q(\R^d, \Sigma)} + \Norm{e_2}_{A^\alpha_q(\R^d, \Sigma)} = 2 < f(p) = \Norm{e_1 + e_2}_{A^\alpha_q(\R^d, \Sigma)}.
    \]
    The property \ref{enu: Sigmainfty density} also holds because $\Sigma_{d + 1} = \R^d$.
    
    The same counterexample also works for $q = \infty$ and $\max \set{\alpha, \frac{1}{p}} > 1$.
\end{ex}

\section{Besov Spaces.}

\subsection{Main Definitions.} \label{subsection: main definition Besov Spaces}

To introduce the Besov spaces, we need to define some operators.

Given $h \in \R^d$, the translation operator $T_h$ is defined for any function $f$ with domain in $\R^d$ as $T_h(f) = f(\; \cdot \; + h)$. We denote by $\Eye$ the identity operator. For $\Omega \subset \R^d$ and $f: \Omega \longrightarrow \R^k$, we denote
\[
\Delta_h^r(f, \Omega)(x) \coloneqq \begin{cases}
    (T_h - \Eye)^r(f)(x) & \txt{ if } x, x + h, \dots, x + rh \in \Omega \\
    0 & \txt{ otherwise} 
\end{cases}
\]
where $r \in \N$, $h \in \R^d$, $x \in \R^d$ and
\[
(T_h - \Eye)^r = \overbrace{(T_h - \Eye) \circ \dots \circ (T_h - \Eye)}^{r \txt{ times}}.
\]

\begin{deff} \label{def: modulus of smoothness}
    Let $\Omega \subset \R^d$ a measurable set, $f: \Omega \longrightarrow \R^k$ a function and $r \in \N$. The modulus of smoothness of $f$ is
    \[
    \omega_r(f, \Omega)_p(t) \coloneqq \sup_{\Abs{h}_2 \leq t} \Norm{\Delta_h^r(f, \Omega)}_{L^p(\Omega, \R^k)}.
    \]
    For $q \in (0, \infty]$,
    \[
        \Abs{f}_{B^\alpha_{p, q}(\Omega)} \coloneqq \begin{dcases}
        \left ( \int_0^1 \left ( \frac{\omega_{\Ceil{\alpha}}(f, \Omega)_p(t)}{t^\alpha} \right )^q \frac{dt}{t} \right )^\frac{1}{q} & \txt{ if } q \in (0, \infty) \\
        \essup_{t \in (0, 1)} t^{-\alpha} \omega_{\Ceil{\alpha}}(f, \Omega)(t) & \txt{ if } q = \infty
        \end{dcases}
    \]
    which is a seminorm when $p,q \in [1, \infty]$ and a quasi-seminorm otherwise.
\end{deff}
\begin{deff}\label{def: Besov}
    For $\Omega \subset \R^d$, $\alpha > 0$, $p, q \in (0, \infty]$, the Besov space is defined as
    \[
    B_{p, q}^\alpha(\Omega) = \left \{ f \in L^p(\Omega) \; \middle | \; \Abs{f}_{B^\alpha_{p, q}(\Omega)} < \infty \right \}
    \]
    where the map
    \[
    \func{\Norm{\; \cdot \;}_{B^\alpha_{p, q}(\Omega)}}{B_{p, q}^\alpha(\Omega)}{\R}{f}{\Norm{f}_{L^p(\Omega)} + \Abs{f}_{B^\alpha_{p, q}(\Omega)}}
    \]
    is a norm when $p, q \in [1, \infty]$ and a quasi-norm otherwise.
\end{deff}

these spaces are closely related to the well-known Sobolev spaces:

\begin{proposition}
    Let $\Omega \subset \R^d$ and $m \in \N$. Then,
    \[
    B_{p, p}^m(\Omega) \hookrightarrow W^{m, p}(\Omega) \hookrightarrow B_{p, 2}^m(\Omega)
    \]
    if $p \in (0, 2]$ and
    \[
    B_{p, 2}^m(\Omega) \hookrightarrow W^{m, p}(\Omega) \hookrightarrow B_{p, p}^m(\Omega)
    \]
    if $p \in [2, \infty)$
\end{proposition}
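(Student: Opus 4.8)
The plan is to transfer the problem to $\R^d$ by means of a universal extension operator and then to read both scales off their Littlewood--Paley descriptions, after which the four embeddings reduce to elementary inequalities between mixed $\ell^q$--$L^p$ norms. First I would fix a linear extension operator $\mathcal{E}$ that is bounded simultaneously on $B^m_{p,q}(\Omega)\to B^m_{p,q}(\R^d)$ and on $W^{m,p}(\Omega)\to W^{m,p}(\R^d)$ for every index in play and satisfies $(\mathcal{E}f)|_\Omega=f$; such an operator is available as soon as $\partial\Omega$ has mild regularity (Stein's operator for Lipschitz $\Omega$, Rychkov's for rougher $\Omega$ and all $p\in(0,\infty]$). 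Since restriction is norm-nonincreasing, any embedding $X(\R^d)\hookrightarrow Y(\R^d)$ yields $\Norm{f}_{Y(\Omega)}\le\Norm{\mathcal{E}f}_{Y(\R^d)}\le C\Norm{\mathcal{E}f}_{X(\R^d)}\le C'\Norm{f}_{X(\Omega)}$, the embedding map remaining the identity; so it suffices to argue on $\R^d$.

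Next I would bring in two classical facts from harmonic analysis, used as black boxes. Fix a dyadic resolution of unity and write $f_j$ ($j\ge0$) for the frequency-localized pieces of $f$. First, the modulus-of-smoothness (quasi-)seminorm of Definitions \ref{def: modulus of smoothness}--\ref{def: Besov} is equivalent on $\R^d$ to $(\sum_{j\ge0}(2^{jm}\Norm{f_j}_{L^p})^q)^{1/q}$ (with the obvious supremum if $q=\infty$). Second, for $1<p<\infty$ the Sobolev norm is equivalent to the square-function expression $\Norm{(\sum_{j\ge0}2^{2jm}\Abs{f_j}^2)^{1/2}}_{L^p}$ --- that is, $W^{m,p}=F^m_{p,2}$ --- which follows from the Littlewood--Paley inequalities, i.e.\ from $L^p$-boundedness of Mikhlin--H\"ormander multipliers (for $0<p\le1$, where the classical Sobolev space is less well behaved, one reads $W^{m,p}$ as this Triebel--Lizorkin space). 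Writing $g_j\coloneqq2^{jm}f_j$, this gives $\Norm{f}_{W^{m,p}}\approx\Norm{(\sum_j\Abs{g_j}^2)^{1/2}}_{L^p}$, $\Norm{f}_{B^m_{p,2}}\approx(\sum_j\Norm{g_j}_{L^p}^2)^{1/2}$ and $\Norm{f}_{B^m_{p,p}}\approx(\sum_j\Norm{g_j}_{L^p}^p)^{1/p}$.

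With these in hand the embeddings are pure mixed-norm bookkeeping, resting on three elementary facts: the $\ell^r$ quasi-norm of a sequence decreases as $r$ grows, so pointwise $(\sum_j\Abs{g_j}^2)^{1/2}\le(\sum_j\Abs{g_j}^p)^{1/p}$ when $p\le2$ and the reverse when $p\ge2$; Tonelli's theorem, which gives $\Norm{(\sum_j\Abs{g_j}^p)^{1/p}}_{L^p}=(\sum_j\Norm{g_j}_{L^p}^p)^{1/p}$; and, for nonnegative $h_j$ and exponent $r=p/2$, the $L^r$-triangle inequality $\Norm{\sum_jh_j}_{L^r}\le\sum_j\Norm{h_j}_{L^r}$ when $r\ge1$ together with the reverse Minkowski inequality $\Norm{\sum_jh_j}_{L^r}\ge\sum_j\Norm{h_j}_{L^r}$ when $0<r\le1$. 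For $p\le2$: the pointwise comparison followed by Tonelli gives $\Norm{f}_{W^{m,p}}\approx\Norm{(\sum_j\Abs{g_j}^2)^{1/2}}_{L^p}\le\Norm{(\sum_j\Abs{g_j}^p)^{1/p}}_{L^p}=(\sum_j\Norm{g_j}_{L^p}^p)^{1/p}\approx\Norm{f}_{B^m_{p,p}}$, hence $B^m_{p,p}\hookrightarrow W^{m,p}$; and reverse Minkowski applied to $h_j=\Abs{g_j}^2$ with $r=p/2\le1$ gives $\Norm{f}_{B^m_{p,2}}\approx(\sum_j\Norm{h_j}_{L^{p/2}})^{1/2}\le\Norm{\sum_jh_j}_{L^{p/2}}^{1/2}=\Norm{(\sum_j\Abs{g_j}^2)^{1/2}}_{L^p}\approx\Norm{f}_{W^{m,p}}$, hence $W^{m,p}\hookrightarrow B^m_{p,2}$. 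For $p\ge2$ the same two computations run with every inequality reversed (now $(\sum_j\Abs{g_j}^p)^{1/p}\le(\sum_j\Abs{g_j}^2)^{1/2}$ and ordinary Minkowski in $L^{p/2}$ since $p/2\ge1$), giving $B^m_{p,2}\hookrightarrow W^{m,p}\hookrightarrow B^m_{p,p}$. The value $p=2$ is the degenerate common case: all three expressions coincide with the norm of $W^{m,2}=H^m$.

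The hard part is entirely in the black boxes: the equivalence of the modulus-of-smoothness definition of $B^m_{p,q}$ with its Littlewood--Paley form, and --- more substantially --- the identification $W^{m,p}=F^m_{p,2}$, which rests on Calder\'on--Zygmund / Mikhlin multiplier theory. This is precisely where the restriction $p<\infty$ and the dichotomy $p\le2$ versus $p\ge2$ come from, and also why $p=1$ is genuinely borderline: the Riesz transforms are unbounded on $L^1$, so there $W^{m,1}$ has to be read as $F^m_{1,2}$ rather than the naive $L^1$-Sobolev space. A secondary, minor point is the existence of a single extension operator bounded on every space involved, which forces some regularity of $\partial\Omega$; for an arbitrary measurable $\Omega$ the statement should be understood on $\R^d$ or on sufficiently regular domains.
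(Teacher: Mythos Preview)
The paper does not actually prove this proposition: immediately after the statement it simply cites paragraph 7.33 of Adams--Fournier and Theorem 7.47 of the same reference to reconcile the two Besov definitions. So there is no ``paper's own proof'' to compare against; your sketch supplies strictly more than the paper does.

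Your argument is the standard one and is correct in outline: reduce to $\R^d$ by extension, invoke the Littlewood--Paley descriptions $B^m_{p,q}\approx\ell^q(L^p)$ and $W^{m,p}=F^m_{p,2}\approx L^p(\ell^2)$, and then the four embeddings are just the elementary comparisons $\ell^p\hookrightarrow\ell^2$ (or the reverse) pointwise together with Minkowski / reverse Minkowski in $L^{p/2}$. You also flag the two genuine soft spots that the paper's bare statement hides: the identification $W^{m,p}=F^m_{p,2}$ rests on Mikhlin--H\"ormander theory and so is only straightforward for $1<p<\infty$ (so the paper's stated range $p\in(0,2]$ is optimistic unless $W^{m,p}$ is \emph{defined} as $F^m_{p,2}$ for $p\le1$), and the extension step requires some regularity of $\partial\Omega$, which the hypothesis ``$\Omega\subset\R^d$'' does not provide. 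Both caveats are appropriate and, if anything, sharpen what the paper asserts.
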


The statement of this result can be found on \cite{adams2003sobolev} at paragraph 7.33. Even though this book works with a different definition of the Besov space given in Definition \ref{def: Besov}, by Theorem 7.47 of the same book, they are equivalent when $p \in (0, \infty)$ and $q \in [1, \infty)$.

\subsection{Equivalent Norms for the Besov Space.}

The above norm, even if it gives us an intuitive idea of its relation with \textit{smooth} functions, can be hard to work on.

\begin{proposition} \label{prop: Besov norm equivalence discretization}
    For every $\alpha > 0$ and $p, q\in (0, \infty]$, the Besov norm is equivalent to an infinite Riemann sum:
    \[
    \Normm_{B_{p, q}^\alpha(\Omega)} \approx \Normm_{L^p(\Omega)} + \begin{dcases}
        \left ( \sum_{k = 1}^\infty \left [ 2^{\alpha k} \omega_r(\; \cdot \;, \Omega)_p(2^{-k}) \right ]^q \right )^\frac{1}{q} & \txt{ if } q \in (0, \infty) \\
        \sup_{k \in \N} 2^{\alpha k} \omega_r(\; \cdot \;, \Omega)_p(2^{-k}) & \txt{ if } q = \infty
    \end{dcases}.
    \]
\end{proposition}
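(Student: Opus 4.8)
The plan is to compare the integral
\[
I_q(f) \coloneqq \left ( \int_0^1 \left ( \frac{\omega_r(f, \Omega)_p(t)}{t^\alpha} \right )^q \frac{dt}{t} \right )^\frac{1}{q}
\]
with the dyadic sum
\[
S_q(f) \coloneqq \left ( \sum_{k = 1}^\infty \left [ 2^{\alpha k} \omega_r(f, \Omega)_p(2^{-k}) \right ]^q \right )^\frac{1}{q},
\]
and then add $\Norm{f}_{L^p(\Omega)}$ to both sides; the $q = \infty$ case is the obvious sup-analogue. The whole argument rests on two elementary facts about the modulus of smoothness $\omega_r(f, \Omega)_p$: it is non-decreasing in $t$ (larger $h$-balls give a larger supremum), and it is $L^p$-bounded, $\omega_r(f,\Omega)_p(t) \le 2^r \Norm{f}_p$ for all $t$. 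Both are immediate from Definition \ref{def: modulus of smoothness} and the triangle inequality in $L^p$ applied to $(T_h - \Eye)^r$ expanded by the binomial formula.

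First I would split the integral over $(0,1)$ as $\sum_{k=0}^\infty \int_{2^{-(k+1)}}^{2^{-k}} (\cdot)\, \frac{dt}{t}$. On the dyadic ring $t \in [2^{-(k+1)}, 2^{-k}]$ monotonicity gives
\[
\omega_r(f,\Omega)_p(2^{-(k+1)}) \le \omega_r(f,\Omega)_p(t) \le \omega_r(f,\Omega)_p(2^{-k}),
\]
while $t^{-\alpha} \approx 2^{\alpha k}$ with constants depending only on $\alpha$, and $\int_{2^{-(k+1)}}^{2^{-k}} \frac{dt}{t} = \log 2$. Hence each ring contributes, up to constants depending only on $\alpha$ and $q$, a term comparable to $\left[2^{\alpha k}\omega_r(f,\Omega)_p(2^{-k})\right]^q$ (for the upper bound) and to $\left[2^{\alpha k}\omega_r(f,\Omega)_p(2^{-(k+1)})\right]^q$ (for the lower bound). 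Summing over $k$, the upper bound yields $I_q(f) \sleq S_q(f) + \Norm{f}_p$, where the $k=0$ ring is absorbed using $\omega_r \le 2^r\Norm{f}_p$; the lower bound, after reindexing $k \mapsto k+1$, yields $S_q(f) \sleq I_q(f) + \Norm{f}_p$, again handling the stray first term via $\omega_r(f,\Omega)_p(1/2) \le 2^r \Norm{f}_p$. Adding $\Norm{f}_p$ throughout gives the claimed two-sided equivalence of the full (quasi-)norms. For $q = \infty$ the same ring decomposition works with sums replaced by suprema and no $\log 2$ factor.

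I do not expect a serious obstacle here; the one point requiring care is bookkeeping the endpoint/boundary terms — the $k=0$ piece of the integral and the index shift in the lower bound — so that the comparison genuinely reduces to $\Norm{f}_p$-controlled remainders rather than circularly to $S_q(f)$ or $I_q(f)$ themselves. A secondary subtlety is that for $q < 1$ the ambient objects are only quasi-norms, so in the upper-bound step one should use the straightforward inequality $\left(\sum a_i\right)^{1/q} \le \sum a_i^{1/q}$ valid for $q \le 1$ (or the quasi-triangle inequality) rather than Minkowski; since everything in sight is a countable sum of non-negative terms, no genuine difficulty arises, but the constant $C$ in the $\approx$ will depend on $q$.
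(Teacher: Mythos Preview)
Your proposal is correct and follows essentially the same route as the paper: dyadic decomposition of $(0,1)$, monotonicity of $\omega_r(f,\Omega)_p$ on each ring, and absorption of boundary contributions via $\omega_r \sleq \Norm{f}_p$. The only cosmetic difference is that in the $\sgeq$ direction the paper's bookkeeping gives $S_q(f) \sleq I_q(f)$ directly with no stray term (the reindexing $k \mapsto k+1$ lands exactly on $\sum_{k \ge 1}$), so your extra $\Norm{f}_p$ there is harmless but unnecessary.
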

\begin{proof}
    Let $f \in B^\alpha_{p, q}(\Omega)$. We start with $q \in (0, \infty)$.
    
    For $\sleq$,
    \[
    \begin{split}
        \int_0^\frac{1}{2} \left ( \frac{\omega_{\Ceil{\alpha}}(f, \Omega)_p(t)}{t^\alpha} \right )^q \frac{dt}{t} &= \sum_{k = 1}^\infty \int_{2^{-(k + 1)}}^{2^{-k}} \left ( \frac{\omega_{\Ceil{\alpha}}(f, \Omega)_p(t)}{t^\alpha} \right )^q \frac{dt}{t} \\
        &\leq \sum_{k = 1}^\infty \int_{2^{-(k + 1)}}^{2^{-k}} \left ( 2^{\alpha (k + 1)}\omega_{\Ceil{\alpha}}(f, \Omega)_p(2^{-k}) \right )^q 2^{k + 1} dt \\
        &= \sum_{k = 1}^\infty \left ( 2^{\alpha (k + 1)}\omega_{\Ceil{\alpha}}(f, \Omega)_p(2^{-k}) \right )^q \\
        &= 2^{\alpha q} \sum_{k = 1}^\infty \left ( 2^{\alpha k}\omega_{\Ceil{\alpha}}(f, \Omega)_p(2^{-k}) \right )^q
    \end{split}
    \]
    where, at the first inequality, we used that $\omega_r(f, \Omega)_r$ is an increasing function.
    We finish using that $\omega_{R}(\; \cdot \;, \Omega) \sleq \Normm_p$:
    \[
    \begin{split}
        \int_\frac{1}{2}^1 \left ( \frac{\omega_{\Ceil{\alpha}}(f, \Omega)_p(t)}{t^\alpha} \right )^q \frac{dt}{t} &\sleq \int_\frac{1}{2}^1 \left ( \frac{\Norm{f}_p}{t^\alpha} \right )^q \frac{dt}{t} \\
        &\leq \Norm{f}_p^q \frac{2^{\alpha q} - 1}{\alpha q}.
    \end{split}
    \]

    For $\sgeq$,
    \[
    \begin{split}
        \int_0^1 \left ( \frac{\omega_{\Ceil{\alpha}}(f, \Omega)_p(t)}{t^\alpha} \right )^q \frac{dt}{t} &= \sum_{k = 0}^\infty \int_{2^{-(k + 1)}}^{2^{-k}} \left ( \frac{\omega_{\Ceil{\alpha}}(f, \Omega)_p(t)}{t^\alpha} \right )^q \frac{dt}{t} \\
        &\geq \sum_{k = 0}^\infty \int_{2^{-(k + 1)}}^{2^{-k}} \left ( 2^{\alpha k}\omega_{\Ceil{\alpha}}(f, \Omega)_p(2^{-(k + 1)}) \right )^q 2^{k} dt \\
        &= \sum_{k = 0}^\infty 2^{-1} \left ( 2^{\alpha k}\omega_{\Ceil{\alpha}}(f, \Omega)_p(2^{-(k + 1)}) \right )^q \\
        &= 2^{-(\alpha q + 1)} \sum_{k = 1}^\infty \left ( 2^{\alpha k}\omega_{\Ceil{\alpha}}(f, \Omega)_p(2^{-k}) \right )^q
    \end{split}
    \]

    The case $q = \infty$ can be proved in an analogous way.
\end{proof}

Besov spaces are heavily related to B-splines.
\begin{deff} \label{eq: def B spline}
For every $x \in \R$, we define
\begin{equation}
\begin{split}
    \beta^{(0)} & \coloneqq \chi_{(0, 1]} \\
    \beta^{(r)}(x) &\coloneqq \frac{1}{r!} \sum_{k = 0}^{r + 1} \binom{r + 1}{k} (-1)^k \varrho_r(x - k) \quad \forall r \in \N \cup \set{0}
\end{split}
\end{equation}
where we recall that $\varrho_r(x) = \ReLU(x)^r$. Moreover, for $x = (x_1, \dots, x_d) \in \R^d$, $j = (j_1, \dots, j_d) \in \Z^d$, $r \in \N$ and $k \in \N \cup \set{0}$, we define
\begin{equation}
\beta^{(r, d)}_{k, j}(x) \coloneqq \beta^{(r - 1)}(2^{k} x_1 - j_1) \cdots \beta^{(r - 1)}(2^{k} x_d - j_d).
\end{equation}
\end{deff}
The B-splines are constructed by repeat convolution:

\begin{proposition} \label{prop: B spline properties convolution}
    Consider $\beta^{(r)}$ as above. Then, for all $r \in \N \cup \set{0}$,
    \begin{equation} \label{eq: B spline is a convolution}
        \beta^{(r + 1)} = \beta^{(r)} * \chi_{[0, 1]}
    \end{equation}
\end{proposition}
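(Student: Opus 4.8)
The plan is a direct computation: unwind the convolution, insert the explicit formula for $\beta^{(r)}$, and recognize the result as $\beta^{(r+1)}$ after a reindexing that invokes Pascal's rule. Since $\chi_{[0,1]}$ is supported on $[0,1]$, write
\[
(\beta^{(r)} * \chi_{[0,1]})(x) = \int_{\R} \beta^{(r)}(x - t)\, \chi_{[0,1]}(t)\, dt = \int_0^1 \beta^{(r)}(x - t)\, dt .
\]
All sums involved are finite, so I may freely interchange summation and integration. Using Definition \ref{eq: def B spline},
\[
\int_0^1 \beta^{(r)}(x - t)\, dt = \frac{1}{r!} \sum_{k = 0}^{r + 1} \binom{r + 1}{k} (-1)^k \int_0^1 \varrho_r(x - k - t)\, dt .
\]

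The key elementary fact is that $\tfrac{1}{r+1}\varrho_{r+1}$ is an antiderivative of $\varrho_r$ for every $r \in \N \cup \{0\}$ (for $r \geq 1$ this is the chain rule applied to $\max\{0,s\}^{r+1}$, and for $r = 0$ one has $\varrho_1(s) = \max\{0,s\}$ with $\varrho_1' = \varrho_0$ a.e.). Hence, substituting $s = x - k - t$,
\[
\int_0^1 \varrho_r(x - k - t)\, dt = \int_{x-k-1}^{x-k} \varrho_r(s)\, ds = \frac{1}{r+1}\bigl[\varrho_{r+1}(x - k) - \varrho_{r+1}(x - k - 1)\bigr].
\]
Plugging this in and using $\tfrac{1}{r!}\cdot\tfrac{1}{r+1} = \tfrac{1}{(r+1)!}$, it remains to show the purely combinatorial identity
\[
\sum_{k = 0}^{r + 1} \binom{r + 1}{k} (-1)^k \bigl[\varrho_{r+1}(x - k) - \varrho_{r+1}(x - k - 1)\bigr] = \sum_{k = 0}^{r + 2} \binom{r + 2}{k} (-1)^k \varrho_{r+1}(x - k),
\]
because the right-hand side is exactly $(r+1)!\,\beta^{(r+1)}(x)$.

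To prove this identity, split the left-hand side into two sums and reindex the second one by $j = k + 1$; the coefficient of $\varrho_{r+1}(x - k)$ then becomes $(-1)^k\bigl[\binom{r+1}{k} + \binom{r+1}{k-1}\bigr]$ for $1 \leq k \leq r+1$, which equals $(-1)^k\binom{r+2}{k}$ by Pascal's rule, while the endpoint terms $k = 0$ and $k = r+2$ match directly since $\binom{r+1}{0} = \binom{r+2}{0}$ and $\binom{r+1}{r+1} = \binom{r+2}{r+2}$. Combining the three displays yields $(\beta^{(r)} * \chi_{[0,1]})(x) = \beta^{(r+1)}(x)$, which is \eqref{eq: B spline is a convolution}. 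I expect the only place demanding care is this bookkeeping of the shifted sum together with the endpoint terms; everything else (the antiderivative computation, the finiteness that justifies interchanging $\int$ and $\sum$) is routine. An alternative, essentially equivalent, route would be induction on $r$ starting from $\beta^{(0)} = \chi_{(0,1]}$, but the direct computation above seems shortest.
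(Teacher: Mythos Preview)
Your proof is correct and matches the paper's approach: both compute $\int_0^1 \varrho_r(x-k-t)\,dt = \tfrac{1}{r+1}\bigl[\varrho_{r+1}(x-k) - \varrho_{r+1}(x-k-1)\bigr]$ and then reindex with Pascal's rule to obtain $\beta^{(r+1)}$. The only difference is that the paper treats $r=0$ by a separate direct computation of $\chi_{(0,1]}*\chi_{[0,1]}$ (since $\varrho_0$ is never defined there), whereas you fold it into the uniform argument by tacitly setting $\varrho_0 = \chi_{(0,\infty)}$; this is harmless but worth stating explicitly.
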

\begin{proof}
    Suppose first that $r = 0$. Then, we know
    \[
    \beta^{(1)}(x) = \begin{cases}
        0 & \txt{ if } x \leq 0 \\
        x & \txt{ if } x \in [0, 1] \\
        2 - x & \txt{ if } x \in [1, 2] \\
        0 & \txt{ if } x \geq 2
    \end{cases}
    \]
    and, for $x \in [0, 2]$
    \begin{align*}
        \chi_{(0, 1]} * \chi_{[0, 1]}(x) &= \int_0^1 \chi_{[0, 1]}(x - y) dy \\
        &= \int_{\max\set{0, x - 1}}^{\min\set{0, x}} dy\\
        &= \min\set{0, x} - \max\set{0, x - 1} \\
        &= \begin{cases}
            x & \txt{ if } x \in [0, 1] \\
            2 - x & \txt{ if } x \in [1, 2]
        \end{cases}.
    \end{align*}
    Trivially $\chi_{(0, 1]} * \chi_{[0, 1]} = 0$ outside of $[0, 2]$, and so $\beta^{(1)} = \chi_{(0, 1]} * \chi_{[0, 1]}$.
    
    Let $k \in \set{0, \dots, r + 1}$, $r \geq 1$ and $x \geq k$. Then, it follows
    \[
    \begin{split}
        \varrho_r(\; \cdot \; - k) * \chi_{[0, 1]}(x) &= \int_0^1 \max \set{x - y - k, 0} dy \\
        &= \int_0^{\min \set{x - k, 1}} (x - y - k)^r dy\\
        &= \frac{1}{r + 1} \left ( (x - k)^{r + 1} - (x - \min \set{x - k, 1} - k)^{r + 1} \right ) \\
        &= \frac{1}{r + 1} \left ( (x - k)^{r + 1} - \max \set{x - (k + 1), 0}^{r + 1} \right ) \\
        &= \frac{1}{r + 1} \left ( \varrho_{r + 1}(x - k) - \varrho_{r + 1}(x - (k + 1)) \right ).
    \end{split}
    \]
    Moreover, if $x < k$,
    \[
    \varrho_r( \; \cdot \; - k) * \chi_{[0, 1]} = \int_0^1 \max \set{x - y - k, 0} dy = 0 = \frac{1}{r + 1} (\varrho_{r + 1}(x - k) - \varrho_{r + 1}(x - (k + 1)).
    \]
    We can conclude:
    \[
    \begin{split}
        \beta^{(r)} * \chi_{[0, 1]}(x) &= \frac{1}{r!} \sum_{k = 0}^{r + 1} \binom{r + 1}{k} (-1)^k \varrho_r(\; \cdot \; - k) * \chi_{[0, 1]}(x) \\
        &= \frac{1}{(r + 1)!} \sum_{k = 0}^{r + 1} \binom{r + 1}{k} (-1)^k (\varrho_{r + 1}(x - k) - \varrho_{r + 1}(x - (k + 1))) \\
        &= \frac{1}{(r + 1)!} \sum_{k = 0}^{r + 1} \binom{r + 1}{k} (-1)^k \varrho_{r + 1}(x - k)\\
        &+ \frac{1}{(r + 1)!} \sum_{k = 0}^{r + 1} \binom{r + 1}{k} (-1)^{k + 1} \varrho_{r + 1}(x - (k + 1)) \\
        &= \frac{1}{(r + 1)!} \sum_{k = 1}^{r + 1} \left ( \binom{r + 1}{k} + \binom{r + 1}{k - 1} \right ) (-1)^k \varrho_{r + 1}(x - k) \\
        &+ (-1)^{r + 2} \varrho_{r + 1}(x - (r + 2)) + \varrho_{r + 1}(x) \\
        &= \frac{1}{(r + 1)!} \sum_{k = 0}^{r + 2} \binom{r + 2}{k} (-1)^k \varrho_{r + 1}(x - k) = \beta^{(r + 1)}(x).
    \end{split}
    \]
\end{proof}

Those piecewise polynomials are a partition of unity:
\begin{proposition} \label{prop: B spline properties partition of unity}
    Consider the B-splines $\beta_{k, j}^{(r, d)}$ as in Definition \ref{eq: def B spline}. Then
    \begin{equation} \label{eq: B splines support}
        \supp \beta_{k, j}^{(r, d)} \subset 2^{-k} ( [0, r]^d + j)
    \end{equation}
    and
    \begin{equation} \label{eq: B spline partition of unity}
        \sum_{j \in \Z^d} \beta_{k, j}^{(r, d)}(x) = 1 \qquad \forall x \in \R^d.
    \end{equation}
\end{proposition}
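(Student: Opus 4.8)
The plan is to exploit the product structure of $\beta^{(r,d)}_{k,j}$ to reduce both assertions to one-dimensional facts about the univariate B-spline $\beta^{(r-1)}$, which I then read off from the convolution identity \eqref{eq: B spline is a convolution}. The first ingredient is the support bound $\supp \beta^{(s)} \subseteq [0, s+1]$ for every $s \in \N \cup \{0\}$. This is clear for $s = 0$ since $\beta^{(0)} = \chi_{(0,1]}$, and the inductive step is immediate from $\beta^{(s+1)} = \beta^{(s)} * \chi_{[0,1]}$ together with the inclusion $\supp(f * g) \subseteq \supp f + \supp g$ for compactly supported $f, g$ (no closure is needed here since the supports are intervals). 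Applying this with $s = r - 1$, the factor $\beta^{(r-1)}(2^k x_i - j_i)$ vanishes unless $2^k x_i - j_i \in [0, r]$, i.e.\ $x_i \in 2^{-k}(j_i + [0, r])$; intersecting the conditions for $i = 1, \dots, d$ gives \eqref{eq: B splines support}.

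For the partition of unity \eqref{eq: B spline partition of unity}, note that both the sum $\sum_{j \in \Z^d}$ and the defining product over the $d$ coordinates factor, so it suffices to prove the scalar identity $\sum_{j \in \Z} \beta^{(s)}(y - j) = 1$ for all $y \in \R$ and all $s \geq 0$; substituting $y = 2^k x_i$ and multiplying the $d$ resulting equalities then yields the claim. I would prove the scalar identity by induction on $s$. The base case $s = 0$ holds because for each $y$ exactly one integer $j$ satisfies $y - j \in (0, 1]$. For the inductive step, write $\beta^{(s+1)}(y - j) = \int_0^1 \beta^{(s)}(y - j - t)\,dt$, sum over $j$, interchange the sum and the integral, and apply the induction hypothesis pointwise in $t$ to obtain $\sum_{j} \beta^{(s+1)}(y - j) = \int_0^1 1 \, dt = 1$.

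The only delicate point is the interchange of the infinite sum and the integral, and this is where I would be slightly careful, although it is not a genuine obstacle: by the support bound from the first paragraph, for a fixed $y$ only finitely many integers $j$ can contribute to $\sum_j \beta^{(s)}(y - j - t)$ uniformly for $t \in [0,1]$, so the sum is in fact finite and the interchange is trivial; alternatively, since $\beta^{(s)} \geq 0$ (being an iterated convolution of nonnegative functions) Tonelli's theorem applies directly. Everything else is routine.
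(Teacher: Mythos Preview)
Your proof is correct and follows essentially the same route as the paper's first proof: both arguments reduce to the one-dimensional case via the product structure, and then prove both the support bound and the partition of unity by induction on the order of the spline using the convolution identity $\beta^{(s+1)} = \beta^{(s)} * \chi_{[0,1]}$. Your justification of the sum--integral interchange is in fact more explicit than the paper's; the paper also offers an alternative proof via the generating function $e^{xy}(1-e^{-y})^{r+1}$, but that is a separate argument and not the one you are being compared to.
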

\begin{proof}
    Let us start proving \eqref{eq: B splines support} by induction. 

    To prove this result, we only need to show this property for $\beta^{(r)}$. The case $r = 0$ is obvious. Suppose that
    \[
    \supp\beta^{(r)} \subset [0, r + 1]
    \]
    for $r \in \N \cup \set{0}$. Then, we get by Proposition \ref{prop: B spline properties convolution} that if $x \not \in (0, r + 2)$,
    \[
    \beta^{(r + 1)}(x) = \beta^{(r)} * \chi_{[0, 1]}(x) = \int_0^1 \beta^{(r)}(x - y) dy = 0.
    \]
    Therefore, we conclude \eqref{eq: B splines support}.

    Let us finish with \eqref{eq: B spline partition of unity}.

    We can suppose that $d = k = 1$. We again prove this result using induction over $r \in \N$. When $r = 1$, \eqref{eq: B spline partition of unity} follows immediately. Suppose for some $r \in \N$ that
    \[
    \sum_{j \in \Z} \beta_{1, j}^{(r, 1)}(x) = \sum_{j \in \Z} \beta^{(r - 1)}(x + j)
    \]
    for all $x \in \R$. Then, \eqref{eq: B spline partition of unity} follows using Proposition \ref{prop: B spline properties convolution}:
    \begin{align*}
        \sum_{j \in \Z} \beta_{1, j}^{(r + 1, 1)}(x) &= \sum_{j \in \Z} \beta^{(r)}(x + j) \\
        &= \sum_{j \in \Z} \beta^{(r - 1)} * \chi_{[0, 1]}(x + j) \\
        &= \chi_{[0, 1]} * \left ( \sum_{j \in \Z} \beta^{(r - 1)}(\; \cdot \; + j) \right )(x) \\
        &= (\chi_{[0, 1]} * 1) (x) = 1.
    \end{align*}
\end{proof}
\begin{proof}[Alternative Proof of Proposition \ref{prop: B spline properties partition of unity}]
    Let us start proving \eqref{eq: B splines support}.
    
    To prove this result, we only need to show this property for $\beta^{(r)}$. It is obvious that $\beta^{(r)} = 0$ in $(- \infty, 0]$. To prove that $\beta^{(r)} = 0$ in $[r + 1, \infty)$, consider the following function
    \[
    f(x, y) \coloneqq e^{x y} (1 - e^{- y})^{r + 1}
    \]
    which, by the binomial formula, is the same as
    \[
    f(x, y) = \sum_{k = 0}^{r + 1} \binom{r + 1}{k} (-1)^k e^{y(x - k)}.
    \]
    Then, we get that
    \begin{equation} \label{eq: f derivative as a sum}
        \frac{\partial^r}{\partial y^r} f(x, y) = \sum_{k = 0}^{r + 1} \binom{r + 1}{k} (-1)^k (x - k)^r e^{y(x - k)}
    \end{equation}
    and so
    \[
    \frac{\partial^r f}{\partial y^r}(x, 0) = r! \beta^{(r)}(x)
    \]
    for all $x \geq r + 1$. Therefore, we only need to proof that $\frac{\partial^r f}{\partial y^r}(x, 0) = 0$ for all $x \geq r +1$. Since $(1 - e^{-y}) = y + \mathcal{O}(y^2)$ by Taylor, we get
    \[
    f(x, y) = e^{x y} y^{r + 1} + \mathcal{O}(y^{r + 2}).
    \]
    implying
    \[
    \frac{\partial^r f}{\partial y^r}(x, y) = (r + 1)! e^{x y} y + \mathcal{O}(y^2).
    \]
    and thus \eqref{eq: B splines support}.

    Let us finish with \eqref{eq: B spline partition of unity}.
    
    We can suppose that $d = k = 1$. By \eqref{eq: B splines support}, we get that for all $x \in [0, 1]$
    \begin{align*}
    B_{r + 1}(x) &\coloneqq \sum_{j \in \Z} \beta^{(r + 1, 1)}_{1, j}(x) = \sum_{j = 0}^r \beta^{(r)}(x + j) \\
    &= \sum_{j = 0}^r \frac{1}{r!} \sum_{k = 0}^{r+ 1} \binom{r + 1}{k} (-1)^k \varrho_r(x + j - k) \\
    &= \frac{1}{r!} \sum_{k = 0}^{r + 1} \binom{r + 1}{k} (-1)^k \sum_{j = k}^r (x + j - k)^r.
    \end{align*}
    Consider then the function
    \[
    g(x, y) = \frac{1}{r!} \sum_{k = 0}^{r + 1} \binom{r + 1}{k} (-1)^k \sum_{j = k}^r e^{(x + j - k)y}
    \]
    since it satisfies that
    \[
    \frac{\partial^r g}{\partial y^r}(x, 0) = B_{r + 1}(x).
    \]
    Our aim is then to show that $\frac{\partial^r g}{\partial y^r}(x, 0) = 1$. For this propose, we rewrite $g$ for $y \neq 0$:
    \begin{align*}
    g(x, y) &= \frac{1}{r!} \sum_{k = 0}^{r + 1} \binom{r + 1}{k} (-1)^k e^{(x - k) y} \sum_{j = k}^r (e^y)^j \\
    &= \frac{1}{r!} \sum_{k = 0}^{r + 1} \binom{r + 1}{k}(-1)^k e^{(x - k)y} e^{k y} \frac{1 - (e^y)^{r - k + 1}}{1 -e^y} \\
    &= \frac{e^{x y}}{r!(1 -e^y)} \sum_{k = 0}^{r + 1} \binom{r + 1}{k} (-1)^k (1 - (e^y)^{r - k + 1}) \\
    &= \frac{e^{x y}}{r!(1 -e^y)} \left ( \sum_{k = 0}^{r + 1} \binom{r + 1}{k} (-1)^k - \sum_{k = 0}^{r + 1} \binom{r + 1}{k} (-1)^k (e^y)^{r - k + 1} \right ) \\
    &= \frac{e^{x y}}{r!(1 -e^y)} \left ( (1 - 1)^{r + 1} - (e^y - 1)^{r + 1} \right ) \\
    &= \frac{e^{x y}}{r!} (e^y - 1)^r.
    \end{align*}
    Since both sides of the equation are continuous, we deduce that for all $x, y \in \R$
    \[
    g(x, y) = \frac{e^{x y}}{r!} (e^y - 1)^r
    \]
    and using again that $(e^y - 1) = y + \mathcal{O}(y^2)$,
    \[
    \frac{\partial^r}{\partial y^r} g(x, y) = \frac{\partial^r}{\partial y^r} \left ( \frac{1}{r!} e^{x y} y^r + \mathcal{O}(y^{r + 2}) \right ) = e^{x y} + \mathcal{O}(y).
    \]
    which tells us $\frac{\partial^r g}{\partial y^r}(x, 0) = 1$.
\end{proof}
We introduce the notation
\[
x^\mu = x_1^{\mu_1} \dots x_d^{\mu_d}
\]
for all $x = (x_1, \dots, x_d) \in \R^d$, $\mu = (\mu_1, \dots, \mu_d) \in \N \cup \set{ 0 }$ and
\[
\mathcal{P}^{(r, d)} \coloneqq \set{ x \longmapsto \sum_{ \substack{\mu \in \N \cup \set{0} \\ \Abs{\mu}_1 < r} } a_\mu x^\mu \st a_\mu \in \R }
\]
where $r \in \N \subset \set{0}$ is the maximum degree and $d \in \N$ the number of variables. Let us see how Besov spaces are related to the functions locally approximated by polynomials:

\begin{proposition} \label{prop: Besov norm equivalence on the unit cube}
    Let $r \in \N$ and $\lambda = \min \set{ r + \frac{1}{p} - 1, r }$ and $\Omega = (0, 1)^d$. Then, for every $\alpha \in (0, \lambda)$ and $p, q \in (0, \infty]$, we have
    \[
    \Normm_{B^\alpha_{p, q}(\Omega)} \approx \Normm_p + \begin{dcases}
        \left ( \sum_{k = 0}^\infty \left [ 2^{\alpha k} \inf_{P \in \Sigma^{(r, d)}_k} \Norm{\; \cdot \; - P}_{L^p(\Omega)} \right ]^q \right )^\frac{1}{q} & \txt{ if } q \in (0, \infty) \\
        \sup_{k \in \N \cup \set{ 0 }} 2^{\alpha k} \inf_{P \in \Sigma^{(r, d)}_k} \Norm{\; \cdot \; - P}_{L^p(\Omega)} & \txt{ if } q = \infty
    \end{dcases}
    \]
    where, for all $r, d, k \in \N$,
    \[
    \Sigma^{(r, d)}_k = \Span \set{ \beta_{k, j}^{(r, d)}|_\Omega}.
    \]
\end{proposition}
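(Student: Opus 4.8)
The plan is to use Proposition \ref{prop: Besov norm equivalence discretization} to replace $\Normm_{B^\alpha_{p,q}(\Omega)}$ by $\Normm_p$ plus the weighted $\ell^q$-sum of the dyadic moduli $\omega_r(\,\cdot\,,\Omega)_p(2^{-k})$, and then to bound that sum two-sidedly by the spline quantity $E_k(f)\coloneqq\inf_{P\in\Sigma^{(r,d)}_k}\Norm{f-P}_{L^p(\Omega)}$ through a Jackson inequality $E_k(f)\sleq\omega_r(f,\Omega)_p(2^{-k})$ and a Bernstein inequality $\omega_r(S,\Omega)_p(t)\sleq\min\{1,(2^kt)^s\}\Norm{S}_{L^p(\Omega)}$ for $S\in\Sigma^{(r,d)}_k$, with $s=r-1+\tfrac1p$ (reading $\tfrac1\infty=0$). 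The hypothesis $\alpha<\lambda$ enters exactly here: $\alpha<r$ is what lets Proposition \ref{prop: Besov norm equivalence discretization} be used with the order-$r$ modulus in place of the order-$\Ceil{\alpha}$ one defining $B^\alpha_{p,q}$ (a Marchaud-type equivalence of seminorms), while $\alpha<r-1+\tfrac1p$ — which is precisely $\alpha<\lambda$ when $p\ge1$, and is implied by $\alpha<r=\lambda$ when $p<1$ — makes the Bernstein exponent $s$ strictly larger than $\alpha$, which is what the discrete Hardy inequality will need. Granting both inequalities, Jackson gives the $\sleq$ half of the proposition after summing in $k$; for $\sgeq$ one takes near-best approximants $S_k\in\Sigma^{(r,d)}_k$, notes $f=S_0+\sum_{k\ge0}(S_{k+1}-S_k)$ in $L^p(\Omega)$ because $\bigcup_k\Sigma^{(r,d)}_k$ is dense (itself a consequence of Jackson and $\omega_r(f,\Omega)_p(t)\to0$), applies Bernstein to each $S_{k+1}-S_k\in\Sigma^{(r,d)}_{k+1}$ together with the $\min(p,1)$-triangle inequality for the linear operator $\Delta_h^r$, and closes with a discrete Hardy inequality using $\Norm{S_{k+1}-S_k}_{L^p(\Omega)}\sleq E_k(f)+E_{k+1}(f)$. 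The case $q=\infty$ is identical with suprema replacing $\ell^q$-sums.

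For Jackson I would construct a quasi-interpolant $Q_kf=\sum_j\lambda_{k,j}(f)\,\beta^{(r,d)}_{k,j}|_\Omega$, where each $\lambda_{k,j}$ is a linear functional supported on $\widetilde Q_{k,j}\coloneqq 2^{-k}(j+[0,r]^d)\cap\Omega$ (the support of $\beta^{(r,d)}_{k,j}$, by Proposition \ref{prop: B spline properties partition of unity}) which is (i) locally $L^p$-stable, $\Abs{\lambda_{k,j}(f)}\sleq 2^{kd/p}\Norm{f}_{L^p(\widetilde Q_{k,j})}$, and (ii) reproduces polynomials of coordinate degree $<r$, so that $Q_kP=P$ on $\Omega$ for every $P\in\mathcal{P}^{(r,d)}$; such functionals exist by duality against the locally supported B-spline basis, and (ii) is consistent with the partition of unity, which already gives $Q_k1=1$. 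On each dyadic cube $Q_{k,m}$ I would pick, by the integrated (averaged-over-$h$) form of Whitney's/Bramble--Hilbert's estimate, a polynomial $P_{k,m}\in\mathcal{P}^{(r,d)}$ with $\Norm{f-P_{k,m}}_{L^p(\widetilde Q_{k,m})}^p\sleq 2^{kd}\!\int_{\Abs{h}\le c2^{-k}}\!\int_{\widetilde Q_{k,m}}\Abs{\Delta_h^r(f,\Omega)(x)}^p\,dx\,dh$, write $f-Q_kf=(f-P_{k,m})-Q_k(f-P_{k,m})$ on $Q_{k,m}$ using reproduction, bound the second term by locality and stability in terms of $\Norm{f-P_{k,m'}}_{L^p(\widetilde Q_{k,m'})}$ over the $O_{r,d}(1)$ neighbours $m'$, sum the $p$-th powers over $m$, and use finite overlap: the integral form makes $\sum_m$ collapse to $\int_{\Abs h\le c2^{-k}}\Norm{\Delta_h^r(f,\Omega)}_{L^p(\Omega)}^p\,dh\le\omega_r(f,\Omega)_p(c2^{-k})^p\approx\omega_r(f,\Omega)_p(2^{-k})^p$. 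This gives $\Norm{f-Q_kf}_{L^p(\Omega)}\sleq\omega_r(f,\Omega)_p(2^{-k})$, hence $E_k(f)\sleq\omega_r(f,\Omega)_p(2^{-k})$ (for $p<1$ every $L^p$-triangle step is replaced by its $p$-subadditive version).

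For Bernstein I would use that every $S\in\Sigma^{(r,d)}_k$ is, on each dyadic cube of side $2^{-k}$, a polynomial of coordinate degree $r-1$, and is globally of class $C^{r-2}$ because $\beta^{(r-1)}$ is — here Proposition \ref{prop: B spline properties convolution}, exhibiting $\beta^{(r-1)}$ as an iterated convolution of indicator functions, makes the regularity transparent. Then $\Delta_h^r(S,\Omega)$ vanishes except on the $O(2^k\Abs h)$-neighbourhood of the dyadic knot hyperplanes, where it is controlled cube-by-cube by $\Norm{S}_{L^\infty}$; passing from $L^\infty$ to $L^p$ on each cube via the finite-dimensional comparison $\Norm{S}_{L^\infty(Q_{k,m})}\sleq 2^{kd/p}\Norm{S}_{L^p(Q_{k,m})}$ and bounding the measure of the bad set yields $\omega_r(S,\Omega)_p(t)\sleq(2^kt)^{s}\Norm{S}_{L^p(\Omega)}$ with $s=r-1+\tfrac1p$; together with the trivial bound $\omega_r(S,\Omega)_p(t)\sleq\Norm{S}_{L^p(\Omega)}$ this is the asserted two-sided estimate. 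Equivalently one reaches the same $s$ through $\omega_r(S)_p(t)\sleq t^{r-1}\omega_1(S^{(r-1)},\Omega)_p(t)$ and the fact that $S^{(r-1)}$ is a dyadic step function.

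The step I expect to be the main obstacle is the Jackson half on the \emph{truncated} cube $(0,1)^d$: producing a B-spline quasi-interpolant that is simultaneously locally $L^p$-stable and exactly reproduces $\mathcal{P}^{(r,d)}$ near the boundary, where only the partial restrictions $\beta^{(r,d)}_{k,j}|_\Omega$ survive, and then running the covering bookkeeping — enlarging cubes to swallow the B-spline supports, controlling the overlap uniformly in $k$, and passing (through the integrated Whitney estimate) from local to global moduli of smoothness — cleanly for every $p\in(0,\infty]$, $p<1$ included. The Bernstein half, by contrast, is a routine knot-counting argument once the $L^p$--$L^\infty$ comparison on dyadic cubes and the $C^{r-2}$-regularity of the splines are in hand.
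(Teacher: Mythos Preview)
Your proposal is correct and follows essentially the same route as the paper's (sketched) proof: Jackson via a B-spline quasi-interpolant, a Bernstein-type estimate on $\Sigma^{(r,d)}_k$ combined with a telescoping decomposition $f=S_0+\sum(S_{k+1}-S_k)$, a discrete Hardy inequality to pass to the $\ell^q$-bound, and finally Proposition~\ref{prop: Besov norm equivalence discretization} together with the equivalence of moduli of different orders $r\ge\Ceil{\alpha}$. The only cosmetic differences are that the paper builds $Q_k$ from de Boor's derivative-based dual functionals applied to an intermediate piecewise-polynomial near-best approximant $S_k(f)$ rather than from $L^p$-stable functionals applied to $f$ directly, and that the paper states the Bernstein exponent as $\lambda=\min\{r,\,r-1+1/p\}$ rather than your $s=r-1+1/p$; your exponent is sharper for $p<1$ but either suffices since the Hardy step only needs the exponent to strictly exceed $\alpha$, which $\alpha<\lambda$ guarantees in both formulations.
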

Due to space constraints, the following proof omits important details that can be consulted in \cite{devore1988interpolation}.
\begin{proof}[Sketch of the Proof]
    Fix $r \in \N$, $p, q \in (0, \infty]$ and $\alpha > 0$. For convenience, we call
    \[
    s_k^r(f) \coloneqq \inf_{P \in \Sigma_k^{(r, d)}} \Norm{f - P}_{L^p(\Omega)}.
    \]
    
    \underline{Step 1:} $\omega_r(\; \cdot \;, \Omega)(2^{-k}) \sgeq s_k^r$ where the implicit constant does not depend on $k \in \N$.

    We introduce the notation
    \begin{align*}
        D_k &\coloneqq \set{ (0, 2^{-k})^d + j \subset (0, 1)^d \st j \in \Z^d } \\
        \Pi_k &\coloneqq \set{ f: \R \longrightarrow \R \st \forall I \in D_k, \exists P \in \mathcal{P}^{(r, d)}: f|_I = P|_I } \\
        \txt{and } \Lambda_k &\coloneqq \set{ j \in \Z^d \st \supp \beta^{(r, d)}_{k, j} \cup Q \neq \emptyset }
    \end{align*}
    We can find the dual of any $\beta_{k, j}^{(r, d)}$, that is, for every $r, k \in \N$ and $j \in \Z^d$, there is a function $\alpha_{k, j}^{(r, d)}: \Sigma_k^{(r, d)} \longrightarrow \R$ such that $\alpha_{k, j}^{(r, d)}(\beta_{k, i}^{(r, d)}) = \delta_{i, j}$. As shown in \cite{de1973spline}, there are some coefficients $\lambda_\mu \in \R$ and for any values $\xi_j \in \supp \beta_{k, j}$ such that
    \[
    \alpha_{k, j}^{(r, d)}(f) = \sum_{\substack{\mu \in (\N \cup \set{0})^d \\ \Abs{\mu}_1 < r}} \lambda_\mu D^\mu f(\xi_j) \quad \txt{ for all } f \in \Sigma_k^{(r, d)}.
    \]
    With this definition of $\alpha_{j, k}^{(r, d)}$, we can define a embedding between $\Pi_k$ and $\Sigma_k^{(r, d)}$:
    \[
    \func{Q_k}{\Pi_k}{\Sigma_k^{(r, d)}}{f}{\sum_{j \in \Lambda_k} \alpha_{k, j}^{(r, d)}(f) \beta_{k, j}^{(r, d)}}.
    \]
    It can be proved that there exists $c_1, c_2 > 0$ such that for every $S \in \Pi_k$ and $I \in D_k$
    \begin{align} \label{eq: Qk properties}
        \Norm{Q_k(S)}_{L^p(I)} \leq \Norm{S}_{L^p(\widetilde{I})} && \txt{ and } && \Norm{S - Q_k(S)}_{L^p(I)} \leq c_2 \inf_{P \in \mathcal{P}^{(r, d)}}\Norm{S - P}_{L^p(\widetilde{I})}
    \end{align}
    where
    \[
    \widetilde{I} = \bigcup \set{2^{-k}([0, r]^d + j) \st j \in \Z^d \txt{ and } 2^{-k}([0, r]^d + j) \cap I \neq \emptyset}.
    \]
    We fix a constant $A \geq 1$ from now on. For any function $f \in L^p(\Omega)$, we can always find $P \in \mathcal{P}^{(r, d)}$ such that $\Norm{f - P}_{L^p(\Omega)} \leq A E(f, \mathcal{P}^{(r, d)})_{L^p(\Omega)}$. And so, for a fixed $f \in L^p(\Omega)$ we define $S_k(f) \in \Pi_k$ such that
    for every $I \in D_k$, there is $P \in \mathcal{P}^{(r, d)}$ satisfying $S_k(f)|_I = P|_I$ and $\Norm{f - P}_{L^p(\Omega)} \leq A E(f, \mathcal{P}^{(r, d)})_{L^p(I)}$. It follows from properties \eqref{eq: Qk properties} that $S_k(f)$ satisfies
    \[
    \Norm{Q_k(S)}_{L^p(I)} \leq \Norm{S}_{L^p(\widetilde{I})}
    \]
    \[
    s_k^r(f) \leq \Norm{f - Q_k(S_k(f))}_{L^p(\Omega)} \sleq \omega_r(f, \Omega)_p(2^{-k})
    \]

    \underline{Step 2:} for every $k \in \N$, $\mu \leq \min \set{1, p}$ and $\lambda = \min \set{r, r - 1 + 1/p }$
    \[
        \omega_{r}(\; \cdot \;, \Omega)_p(2^{-k}) \sleq 2^{-k \lambda} \left ( 2^{-\lambda} \Normm_{L^p(\Omega)}^\mu + \sum_{j = 0}^k [2^{j \lambda} s_j^r(f)]^\mu \right )^\frac{1}{\mu} 
    \]
    where the implicit constant does not depend on $k$.

    For $f \in L^p(\Omega)$, we find $U_j \in \Sigma_j^{(r, d)}$ such that $\Norm{f - U_k} = s_k^r(f)$ and we call $u_j = U_j - U_{j - 1}$ for $j = 0, \dots, k$ and $u_{-1} = 0$. We get for $h \in \R^d$
    \begin{equation} \label{eq: rewrite of diference}
        (T_h - \Eye)^r f = (T_h - \Eye)^r (f - U_k) + \sum_{j = 0}^k (T_h - \Eye)^r u_j
    \end{equation}
    where $T_h$ and $\Eye$ are as introduced in Subsection \ref{subsection: main definition Besov Spaces}. When $\Abs{h}_2 \leq r^{-1} 2^{-k}$, it can be seen that there is a constant $C = C(d, r) > 0$
    \[
    \Norm{(T_h - \Eye)^r \beta^{r, d}_{j, i}}_{L^p(\Omega(r h))}^p \leq C (\Abs{h}_2 2^j)^{\lambda p} 2^{- j d}
    \]
    where $\Omega(r h) = (\Omega - r h) \cap \Omega$ and so, denoting $s_{-1}^r(f) = \Norm{f}_p$,
    \begin{align*}
        \Norm{(T_h - \Eye)^r u_j}_{L^p(\Omega(r h))} &\leq C_1 \left ( \sum_{i \in \Lambda_j} \Abs{\alpha^{(r, d)}_{j, i}(u_j)}^p \Norm{(T_h - \Eye)^r \beta^{r, d}_{j, i}}_{L^p(\Omega(r h))}^p \right )^\frac{1}{p} \\
        &\leq C_2 (\Abs{h}_2 2^j)^\lambda \left ( \sum_{i \in \Lambda_j} \Abs{\alpha^{(r, d)}_{j, i}(u_j)}^p 2^{- j d} \right )^\frac{1}{p} \\
        &\leq C_3 (\Abs{h}_2 2^j)^\lambda \Norm{u_j}_{L^p(\Omega)} = C_3 (\Abs{h}_2 2^j)^\lambda \Norm{f - U_{j - 1} - (f - U_j)}_{L^p(\Omega)} \\
        &\leq C_4 (\Abs{h}_2 2^j)^\lambda (s_j^r(f) + s_{j - 1}^r(f)).
    \end{align*}
    In the second inequality we used that
    \[
    \left ( \sum_{i \in \Lambda_j} \Abs{\alpha_{j , i}^{(r, d)}(S)}^p 2^{-j d} \right )^\frac{1}{p} \approx \Norm{S}_{L^p(\Omega)}
    \]
    being the implicit constants independent of $j \in \N$. Thus, we get from \eqref{eq: rewrite of diference}
    \[
    \Norm{\Delta^r_h f}_{L^p(\Omega)} \leq C \left ( s_k^r(f)^\mu + \Abs{h}_2^{\mu \lambda} \sum_{j = -1}^k [2^{j \lambda} s_j^r(f)]^\mu \right )^\frac{1}{\mu}
    \]
    where we used $\Norm{(T_h - \Eye)^r \; \cdot \;}_{L^p(\Omega(r h))} \sleq \Normm_{L^p(\Omega)}$. We conclude
    \[
    \omega_r(f, \Omega)_p(2^{-k}) \leq \omega_r(f, \Omega)_p(r^{-1} 2^{-k}) \leq C r^{-\lambda} 2^{-k \lambda} \left (r^\lambda 2^{k \lambda} s_k^r(f)^\mu + \sum_{j = -1}^k [2^{j, \lambda} s_j^r(f)]^\mu \right )
    \]
    just as we wanted.

    \underline{Step 3:} $\Norm{(2^{k \alpha} \omega_r(\; \cdot \;, \Omega)_p(2^{-k}))_{k = 0}^\infty}_{\ell^q} \sleq \Norm{(2^{k \alpha} s_k^r(\; \cdot \;))_{k = - 1}^\infty}_{\ell^q}$ for $\alpha < \lambda$.

    This is a consequence of Step 2 and a discrete Hardy inequality. Indeed, if there are $c > 0, \lambda > \alpha, q \geq \mu$ and $(a_k)_{k = 0}^\infty, (b_k)_{k = 0}^\infty \subset \R$ such that
    \[
    \Abs{b_k} \leq c 2^{- k \lambda} \left ( \sum_{j = 0}^k [2^{j \lambda} \Abs{a_j}]^\mu \right )^\frac{1}{\mu}
    \]
    then there is a constant $C = C(\lambda, \mu, c) > 0$ satisfying
    \[
    \Norm{(2^{\alpha k} b_k)_{k = 0}^\infty}_{\ell^q} \leq C \Norm{(2^{\alpha k} a_k)_{k = 0}^\infty}_{\ell^q}.
    \]

    \underline{Step 4}: Conclusion.

    Step 1 and Step 3 implies the equivalence between the norms using Proposition \ref{prop: Besov norm equivalence discretization} because for all $r, r' \in \N$ such that $r, r' \geq \alpha$,
    \[
    \Norm{(2^{k \alpha} w_{r'}(\; \cdot \;, \Omega)_p(2^{-k})_{k = 0}^\infty}_{\ell^q} \approx \Norm{(2^{k \alpha} w_{r}(\; \cdot \;, \Omega)_p(2^{-k})_{k = 0}^\infty}_{\ell^q},
    \]
    see Theorem 10.1 of Chapter 2 in \cite{devore1993constructive}.
\end{proof}

\section{Neural Network Approximation Space.}

\subsection{Neural Network Approximation Space is a Quasi-normed Space.}

We want to study the approximation capabilities of NN in $L^p(\Omega, \R^k)$ for $p \in (0, \infty)$ and $C_0(\Omega, \R^k)$ when $\Omega \subset \R^d$. Concretely, how the conectivity, the number of neurons and the number of layers influence it (see Definition \ref{NN parameters}).

\begin{deff}\label{def: approx families and spaces}
    Let $p \in (0, \infty)$, $d, k \in \N$ and $\Omega \subset \R^d$ a measurable set. For $n \in \N$ and $X$ a subset of $L_{\loc}^1(\Omega)^k$, we call
    \begin{align*}
        \Mtt_n(X, \varrho) &\coloneqq \set{ R(\phi) \in X \st \phi \txt{ is a } \varrho\txt{-NN}, M(\phi) \leq n}, \\
        \Mtt_0(X, \varrho) &\coloneqq \set{ 0 }
    \end{align*}
    Then, we define for $q \in (1, \infty]$ the approximation classes
    \begin{equation*}
        M_{p, q}^\alpha(\Omega, \varrho)^k \coloneqq A_q^\alpha(L^p(\Omega)^k, (\Mtt_n(L^p(\Omega)^k, \varrho))_{n = 0}^\infty).
    \end{equation*}
\end{deff}

\begin{remark}
    In the paper \cite{gribonval2022approximation}, the authors consider also the spaces
    \[
    \begin{split}
    C_{p, q}^\alpha(\Omega, \varrho, \LL)^k &\coloneqq A_q^\alpha(L^p(\Omega)^k, (\Ctt_n(L^p(\Omega)^k, \varrho, \LL))_{n = 0}^\infty) \\
    N_{p, q}^\alpha(\Omega, \varrho, \LL)^k &\coloneqq A_q^\alpha(L^p(\Omega)^k (\Ntt_n(L^p(\Omega)^k, \varrho, \LL))_{n = 0}^\infty), \\
    \txt{where } \Ctt_n(X, \varrho, \LL) &\coloneqq \set{ R(\phi) \in X \st \phi \txt{ is a } \varrho\txt{-NN}, C(\phi) \leq n, L(\phi) \leq \LL(n)} \\
    \txt{and } \Ntt_n(X, \varrho, \LL) &\coloneqq \set{ R(\phi) \in X \st \phi \txt{ is a } \varrho\txt{-NN}, N(\phi) \leq n, L(\phi) \leq \LL(n) }
    \end{split}
    \]
    for any non-decreasing function $\LL: \N \longrightarrow \N \cup \set{\infty}$ but, as they show, for the results we are concerned, only the supremum of the non-decreasing function matters and the differences between $\Mtt_{p, q}^\alpha(\Omega, \varrho)$, $\Ntt_{p, q}^\alpha(\Omega, \varrho, \LL)^k$ and $\Ctt_{p, q}^\alpha(\Omega, \varrho, \LL)^k$ are minor. Therefore, for a reason of space, we omit those cases. For the same reason, we do not develop the theory on the approximation spaces introduced in the same paper
    \[
    \begin{split}
        C_{\infty, q}^\alpha(\Omega, \varrho, \LL)^k & \coloneqq A_q^\alpha(C_0(\Omega)^k, (\Ctt_n(C_0(\Omega)^k, \varrho, \LL))_{n = 0}^\infty) \\
        N_{\infty, q}^\alpha(\Omega, \varrho, \LL)^k & \coloneqq A_q^\alpha(C_0(\Omega)^k, (\Ntt_n(C_0(\Omega)^k, \varrho, \LL))_{n = 0}^\infty)
    \end{split}
    \]
    where
    \[
    C_0(\Omega)^k \coloneqq \set{ f|_\Omega \st f \in C(\R; \R^k) \txt{ and } \lim_{\Abs{x}_2 \rightarrow \infty} f(x) = 0 }.
    \]
\end{remark}

For the next proposition, we use the following notation. We define for every pair of functions $f: \R^d \longrightarrow \R^k$, $g: \R^{d'} \longrightarrow \R^{k'}$ their tensor product:
    \begin{equation} \label{eq: tensor product}
    \func{f \otimes g}{\R^d \times \R^{d'}}{\R^k \times \R^{k'}}{(x, y)}{(f(x), g(y))}
    \end{equation}
    and if $k = k'$ their tensor sum
    \begin{equation} \label{eq: tensor sum}
    \func{f \oplus g}{\R^d \times \R^{d'}}{\R^k}{(x, y)}{f(x) + g(y)}.
    \end{equation}

\begin{proposition} \label{prop: NN satisfies P1 to P4}
    For every activation function $\varrho: \R \longrightarrow \R$ and a measurable set $\Omega \subset \R^d$, the family $(\Mtt_n(L^p(\Omega, \R^k), \varrho))_{n = 0}^\infty$ satisfies \ref{enu: Sigma0 0}--\ref{enu: Additivity closeness}.
\end{proposition}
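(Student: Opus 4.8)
The plan is to verify the four properties \ref{enu: Sigma0 0}--\ref{enu: Additivity closeness} in turn; write $\Sigma_n \coloneqq \Mtt_n(L^p(\Omega, \R^k), \varrho)$. Properties \ref{enu: Sigma0 0} and \ref{enu: Increasing Sigma} are immediate: \ref{enu: Sigma0 0} holds by definition, and \ref{enu: Increasing Sigma} reduces to $M(\phi) \leq n-1 \Rightarrow M(\phi) \leq n$, once we observe that for $n = 1$ the single-layer $\varrho$-NN whose affine map is the zero map realizes the zero function with $0$ weights, so $0 \in \Sigma_1$ and $\Sigma_0 = \{0\} \subset \Sigma_1$. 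For \ref{enu: Scalar closeness} (the case $n = 0$ being trivial), given $g = R(\phi) \in \Sigma_n$ with $\phi = ((T_1, \alpha_1), \dots, (T_L, \alpha_L))$ and $a \in \R \setminus \{0\}$, I would replace only the last affine map $T_L\colon x \mapsto A_L x + b_L$ by $x \mapsto a A_L x + a b_L$, keeping every activation; since the last activation of a $\varrho$-NN is $\Id$, the modified network realizes $a g$, and multiplication by the nonzero scalar $a$ changes no nonzero entry, so $\Norm{a A_L}_0 = \Norm{A_L}_0$ and $\Norm{a b_L}_0 = \Norm{b_L}_0$ and the weight count is unchanged. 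This gives $a \Sigma_n \subset \Sigma_n$, and applying it to $a^{-1}$ yields equality.

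The substance is \ref{enu: Additivity closeness}; fix $n \geq 1$ (for $n = 0$ it is trivial) and $g_i = R(\phi^i) \in \Sigma_n$ for $i = 1, 2$. The idea is to realize $g_1 + g_2$ by running the computations of $\phi^1$ and $\phi^2$ side by side inside one $\varrho$-NN, duplicating the input in the first affine map and summing the two outputs in the last one. Since this needs equal depths, I would first normalize: \emph{every $g \in \Sigma_n$ is realized by some $\varrho$-NN having at most $n$ weights and at most $n$ layers}. Indeed, if a given realizing network has a layer whose weight matrix and bias both vanish, then its realization is a constant $c \in \R^k$, and inspecting the last layer shows $\Norm{c}_0 \leq M_L(\phi) \leq n$, so $c$ is realized by the single layer with constant affine map $x \mapsto c$ (depth $1$); otherwise every layer carries at least one weight, so $L(\phi) \leq M(\phi) \leq n$.

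Granting this, take such realizations $\phi^1, \phi^2$ of $g_1, g_2$ and, if their depths differ, pad the shallower one up to the common depth $L \leq n$ by appending layers $(x \mapsto \Eye_k x,\ \Id)$, which is legitimate because $\Id$ is an admissible activation in a $\varrho$-NN; this costs at most $(L-1)k \leq (n-1)k$ extra weights. With both networks of depth $L$, I form the side-by-side network $\psi$: for $1 < \ell < L$ its $\ell$-th affine map is block-diagonal with blocks $A_\ell^1, A_\ell^2$ and bias the concatenation of $b_\ell^1, b_\ell^2$; its first affine map stacks $A_1^1, A_1^2$ vertically (feeding the input to both blocks) and concatenates the biases; its last affine map places $A_L^1, A_L^2$ horizontally and uses $b_L^1 + b_L^2$ as bias (thereby adding the two outputs); the activations are the componentwise juxtaposition of those of the two networks. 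A direct trace of the realization gives $R(\psi) = g_1 + g_2$; $\psi$ is a $\varrho$-NN because every activation component is still in $\{\varrho, \Id\}$ and the last activation is $\Id$; and subadditivity of $\Norm{\cdot}_0$ under stacking gives $M(\psi) \leq (n + (n-1)k) + n \leq (k+2)n$. Hence \ref{enu: Additivity closeness} holds with $c = k + 2$.

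The one delicate point is the depth-synchronization in \ref{enu: Additivity closeness}: padding two networks to a common depth must not inflate the weight count beyond $O(n)$, which is exactly what the normalization step secures — degenerate networks collapse to constants (cheap to re-encode), while non-degenerate ones already satisfy $L(\phi) \leq M(\phi)$. Everything else is routine bookkeeping with $\Norm{\cdot}_0$ and the definitions of realization and concatenation.
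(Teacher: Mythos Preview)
Your proof is correct and follows essentially the same approach as the paper. The only organizational difference is that you normalize both networks upfront (collapsing degenerate networks to single-layer constants so that every realizing network satisfies $L(\phi) \leq M(\phi) \leq n$) before padding and combining, whereas the paper instead performs a case split on whether the \emph{deeper} network $\phi_1$ satisfies $M(\phi_1) \geq L(\phi_1)$ and, in the degenerate case, absorbs the constant realization directly into the bias of the other network; both routes yield the same constant $c = k+2$ in \ref{enu: Additivity closeness}.
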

\begin{proof}
    \ref{enu: Sigma0 0} and \ref{enu: Increasing Sigma} are trivially satisfied. 
    
    To prove \ref{enu: Scalar closeness}, let $\phi = ((T_1, \alpha_1), \dots, (T_L, \alpha_L))$ be a $\varrho$-NN and $a \in \R \setminus \set{ 0 }$ any non zero scalar. Then, $\phi' = ((T_1, \alpha_1), \dots, (a T_L, \alpha_L))$ is a $\varrho$-NN (since $a T_L$ is still an affine map and $\alpha_L = \Id_{\dimout \phi}$), and clearly satisfies
    \[
        M(\phi') = M(\phi).
    \]
    It is also immediate that $R(\phi') = a R(\phi)$, again, because $\alpha_L = \Id_{\dimout \phi}$. Therefore,
    \[
    a \Mtt_n(L^p(\Omega)^k, \varrho) \subset \Mtt_n(L^p(\Omega)^k, \varrho),
    \]
    For the other content, it suffices to note that $\frac{1}{a}R(\phi)$ is a $\varrho$-NN if $a \neq 0$. So, we proved that this family satisfies \ref{enu: Scalar closeness}.

    To prove \ref{enu: Additivity closeness}, consider two $\varrho$-NN
    \[
    \begin{split}
        \phi_1 &= ((T_1, \alpha_1), \dots, (T_L, \alpha_L)), \\
        \phi_2 &= ((S_1, \beta_1), \dots, (S_K, \beta_K))
    \end{split}
    \]
    such that $\phi_1, \phi_2 \in \Mtt_n(L^p(\Omega)^k, \varrho)$ and $L \geq K$. We split the proof in two cases.
    \begin{itemize}
    \item If $M(\phi_1) \geq L(\phi_1)$, we consider the $\varrho$-NN
        \[
        \phi = ((T_1 \otimes S_1, \alpha_1 \otimes \beta_1), \dots, (T_{L - 1} \otimes S_{L - 1}, \alpha_{L - 1} \otimes \beta_{L - 1}), (T_L \oplus S_L, \Id_k))
        \]
        with $S_\ell = \beta_\ell = \Id_{\dimout \phi_2}$ for $\ell \in \{K + 1, \dots, L\}$. This new $\varrho$-NN satisfies
        \begin{equation}
        \begin{split}
            M(\phi) &\leq M(\phi_1) + M(\phi_2) + (L - K) \dimout \phi_2 \\
            &\leq n + n + L k \leq n(k + 2)
        \end{split}
        \end{equation}
        because
        \[
        W(\phi) = \left ( \left ( \begin{bmatrix} A_1^1 & 0 \\ 0 & A_1^2 \end{bmatrix}, \begin{bmatrix} b_1^1 \\ b_1^2 \end{bmatrix} \right ), \dots, \left ( \begin{bmatrix} A_{L - 1}^1 & 0 \\ 0 & A_{L - 1}^2 \end{bmatrix} \right ), \left ( \begin{bmatrix} A_L^1 & A_L^2 \end{bmatrix}, \begin{bmatrix} b_L^1 + b_L^2 \end{bmatrix} \right ) \right )
        \]
        where
        \[
        \begin{split}
            W(\phi) &= ((A_1^1, b_1^1), \dots, (A_L^1, b_L^1)), \\
            W((S_1, \beta_1, \dots, S_L, \beta_{L})) &= ((A_1^2, b_1^2), \dots, (A_L^2, b_L^2)).
        \end{split}
        \]
        As we want, we get that $R(\phi) = R(\phi_1) + R(\phi_2)$ and so
        \[
        R(\phi_1) + R(\phi_2) = R(\phi) \in \Mtt_{n(k + 2)}(L^p(\Omega)^k, \varrho).
        \]
        
        \item If $M(\phi_1) < L(\phi_1)$ and
        \[
        W(\phi_1) = ((A_1, b_1), \dots, (A_L, b_L))
        \]
        where $A_\ell \in \R^{n_\ell \times n_{\ell - 1}}$ and $b \in \R^{n_\ell}$ for $\ell \in \{ 1, \dots, L \}$ then we get
        \[
        M(\phi_1) = \sum_{\ell = 1}^L \Norm{A_\ell}_0 + \Norm{b_\ell}_0 < \sum_{\ell = 1}^L 1 = L.
        \]
        This implies that, for some $\ell \in \{1, \dots, L\}$, $\Norm{A_\ell}_0 = \Norm{b_\ell} = 0$ and therefore $R(\phi)$ is constant. So, we consider the $\varrho$-NN
        \[
        \phi' = ((S_1, \beta_1), \dots, (S_K', \beta_K))
        \]
        with $S'_K(\; \cdot \;) = S_K(\; \cdot \;) + R(\phi_1)$. We get that $R(\phi') = R(\phi_1) + R(\phi_2)$ and $C(\phi') = C(\phi)$. Then
        \[
        R(\phi_1) + R(\phi_2) = R(\phi') \in \Mtt_{n}(L^p(\Omega)^k, \varrho).
        \]
    \end{itemize}
    Therefore, we proved that, for any $n \in \N$,
    \[
    \Mtt_n(L^p(\Omega)^k, \varrho) + \Mtt_n(L^p(\Omega)^k, \varrho) \subset \Mtt_{n(k+2)}(L^p(\Omega)^k, \varrho).
    \]
    This concludes the proof of Property \ref{enu: Additivity closeness}.
\end{proof}

\begin{theorem}[Density] \label{th: Density of NN}
    Let $\varrho: \R \longrightarrow \R$ be a function. Suppose that
    \begin{enumerate}
        \item \label{enu: almost continuous}
        there exists an open set $U \subset \R$ such that $\R \setminus U$ has Lebesgue measure 0 and $\varrho|_U$ is continuous;

        \item \label{enu: locally bounded}
        $\varrho$ is locally bounded, that is, for every bounded interval $I$, $\varrho|_I$ is bounded.
    \end{enumerate}
    Then, the subspace
    \[
    V_d \coloneqq \Span \set{ \varrho ( \inner{ a, \; \cdot \; } + b ) \st a \in \R^d, b \in \R }
    \]
    is dense in $C(\R^d) = C(\R^d; \R)$ if and only if there does not exist $p \in \bigcup\limits_{r \in \N} \mathcal{P}^{(r, 1)}$ such that $\varrho = p$ a.e.
\end{theorem}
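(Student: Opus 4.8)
The plan is to prove both directions along the classical route of \cite{leshno1993multilayer}. Throughout, $C(\R^d)$ is equipped with the topology of uniform convergence on compact sets, $\overline{V_d}$ denotes the corresponding closure, and "$V_d$ dense" means that every $f\in C(\R^d)$ is a uniform-on-compacta limit of elements of $V_d$ (the approximants themselves need not be continuous). For \emph{necessity}, assume $\varrho=p$ a.e.\ for some $p\in\mathcal{P}^{(r,1)}$. If $a=0$ then $\varrho(\langle a,\cdot\rangle+b)$ is constant; if $a\neq 0$ then the affine map $x\mapsto\langle a,x\rangle+b$ has the property that preimages of Lebesgue-null subsets of $\R$ are Lebesgue-null in $\R^d$, so $\varrho(\langle a,x\rangle+b)=p(\langle a,x\rangle+b)$ for a.e.\ $x$. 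Hence every $v\in V_d$ agrees a.e.\ with an element of the finite-dimensional space $\mathcal{P}^{(r,d)}$, and a uniform-on-compacta limit of such functions again agrees a.e.\ — hence, being continuous, everywhere — with an element of $\mathcal{P}^{(r,d)}$; thus $\overline{V_d}\subset\mathcal{P}^{(r,d)}\subsetneq C(\R^d)$ and $V_d$ is not dense.

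For \emph{sufficiency}, assume $\varrho$ is not a.e.\ equal to any polynomial. I would first reduce to $d=1$: granting that $V_1$ is dense in $C(\R)$, given $f\in C(\R^d)$, a compact $K$ and $\varepsilon>0$, approximate $f$ uniformly on $K$ by a polynomial $P$ (Stone--Weierstrass), write $P=\sum_j g_j(\langle a_j,\cdot\rangle)$ with $a_j\in\R^d$ and $g_j$ univariate polynomials (powers of linear forms span the homogeneous polynomials of each degree), approximate each $g_j$ uniformly on the compact set $\{\langle a_j,x\rangle\,:\,x\in K\}$ by some $w_j\in V_1$, and note that $\sum_j w_j(\langle a_j,\cdot\rangle)\in V_d$ approximates $P$, hence $f$, uniformly on $K$. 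So it suffices to handle $d=1$.

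For $d=1$, the tool is mollification. For $\phi\in C_c^\infty(\R)$ the convolution $\varrho*\phi$ lies in $C^\infty(\R)$, and the Riemann sums $\sum_i\phi(s_i)\,\Delta_i\,\varrho(\,\cdot-s_i)$ of the defining integral lie in $V_1$; since $\varrho$ is locally bounded and a.e.\ continuous it is Riemann integrable on compacta, and these sums converge to $\varrho*\phi$ uniformly on compacta, so $\varrho*\phi\in\overline{V_1}$. Since $V_1$, hence $\overline{V_1}$, is invariant under $g\mapsto g(\alpha\,\cdot+\beta)$, for any $g_0\in\overline{V_1}\cap C^\infty(\R)$ the family $\alpha\mapsto\big(t\mapsto g_0(\alpha t+\beta)\big)$ stays in $\overline{V_1}$, and its $k$-th divided difference in $\alpha$ (again in $\overline{V_1}$) converges, uniformly on $t$-compacta as the step tends to $0$, to $t^k g_0^{(k)}(\alpha t+\beta)$; evaluating at $\alpha=0$ gives $t\mapsto t^k g_0^{(k)}(\beta)\in\overline{V_1}$. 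Therefore, once we exhibit some $g_0=\varrho*\phi_0\in\overline{V_1}\cap C^\infty(\R)$ which is \emph{not} a polynomial, we have $g_0^{(k)}\not\equiv 0$ for every $k$, so choosing $\beta$ with $g_0^{(k)}(\beta)\neq 0$ yields $t^k\in\overline{V_1}$; hence $\overline{V_1}$ contains every polynomial and, by Weierstrass, all of $C(\R)$, which finishes the argument.

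It remains to find $\phi_0\in C_c^\infty(\R)$ with $\varrho*\phi_0$ not a polynomial, and I expect this to be the main obstacle. I would argue by contradiction: if $\varrho*\phi$ is a polynomial for every $\phi\in C_c^\infty((0,1))$, write $C_c^\infty((0,1))=\bigcup_{m\geq 0}W_m$ with $W_m=\{\phi:\deg(\varrho*\phi)\leq m\}$; each $W_m$ is a closed linear subspace, because $\deg(\varrho*\phi)\leq m$ is equivalent to the vanishing of the continuous functionals $\phi\mapsto\int\varrho(t-s)\phi^{(m+1)}(s)\,ds$ (well defined since $\varrho$ is locally integrable). By Baire's theorem some $W_m$ has nonempty interior, hence equals $C_c^\infty((0,1))$, and by translating supports the degree of $\varrho*\phi$ is then bounded by a fixed $m$ for all $\phi$ supported in a unit interval. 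Running $\phi$ through an approximate identity exhibits $\varrho$, on every bounded interval, as an a.e.\ pointwise limit of polynomials of degree $\leq m$; since that set of polynomials is finite-dimensional (hence closed under pointwise limits) and polynomials agreeing a.e.\ on overlapping intervals coincide, $\varrho$ would be a.e.\ a single polynomial, contradicting the hypothesis. Beyond this Baire-category step, the remaining work — the uniform-on-compacta convergence of the Riemann sums and of the parameter difference quotients — is routine but must be handled carefully, since $\varrho$ is only assumed locally bounded and a.e.\ continuous.
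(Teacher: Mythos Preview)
Your proposal follows essentially the same route as the paper (which itself completes the argument of Leshno et al.): reduce to $d=1$ via ridge-function density, show $\varrho*\phi\in\overline{V_1}$ for test functions $\phi$ by Riemann-sum approximation, use parameter-differentiation to place all monomials in $\overline{V_1}$ once a non-polynomial smooth element is available, and invoke Baire category to produce such an element. One technical caveat: the space $C_c^\infty((0,1))$ with its standard inductive-limit topology is \emph{not} a Baire space (it is itself the countable union of the proper closed subspaces $\{\phi:\operatorname{supp}\phi\subset[1/n,\,1-1/n]\}$), so your Baire step does not apply to it as written; the fix, which the paper uses, is to run the argument in the Fr\'echet (hence Baire) space of smooth functions with support contained in a \emph{fixed} compact interval, where your description of each $W_m$ as an intersection of kernels of continuous linear functionals carries over verbatim.
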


Note that every element in $V_d$ is a strict $\varrho$-NN of two layers.

Here, a familly of functions $S \subset C(\R^d)$ is dense in $C(\R^d)$ if, for every compact $K \subset \R^d$, $S$ is dense in $(C(K), \Normm_{L^\infty(K)})$. We also use the following notation
\[
C^\infty_c(\Omega) \coloneqq \set{ \varphi \in C^\infty(\Omega) \st \supp \varphi \subset \Omega \txt{ is compact} }.
\]
We recall too that the Lebesgue measure of a set $E \subset \R^d$ is defined as
\[
\Abs{E} = \inf \set{ \sum_{k = 1}^\infty \Abs{R_k} \st R_k \subset \R^d \txt{ is a rectangle for } k \in \N \txt{ and } E \subset \bigcup_{k = 1}^\infty R_k}
\]
where $R \subset \R^d$ is a rectangle if there are $I_1, \dots, I_d$ intervals such that $R = I_1 \times \dots \times I_d$ and $\Abs{R} = (\sup I_1 - \inf I_1) \cdots (\sup I_d - \inf I_d)$. 

\begin{remark} \label{remark: density theorem changes}
The proof that follows is a completed and corrected version from \cite{leshno1993multilayer}. Concretely, Step 1.1 is added following \cite{lin1993fundamentality} as well as Steps 4.2 and 5.1 following \cite{folland1999real} respectively. Moreover, Step 3 corrects an argument given in \cite{leshno1993multilayer} where hypothesis \textit{a)} is mistaken as a discontinuity in a countable set.    
\end{remark}

\begin{proof}
    Suppose that $\varrho = p$ a.e with $p \in \mathcal{P}^{(r, 1)}$. Then, $V_d \subset \mathcal{P}^{(r, d)}$ which is a finite dimensional space, preventing the density of $V$ in the infinite dimensional space $C(\R^d)$.

    For the converse implication, suppose that $\varrho$ satisfies \ref{enu: almost continuous} and \ref{enu: locally bounded} without being a polynomial. We divide the proof on several steps.

    \underline{Step 1:} It suffice to show that $V_1$ is dense.

    \underline{Step 1.1:} The space
    \[
    W_d \coloneqq \set{ \sum_{k = 1}^n f_k( \inner{ a_k, \; \cdot \; } ) \st n \in \N, a_1, \dots, a_n \in \R^d, f_1, \dots, f_n \in C(\R)}
    \]
    is dense in $C(\R)$.

    To show this, using Weierstrass Theorem, we only need to show that $W_d$ contains every polynomial. Let $r \in \N \cup \set{ 0 }$ and consider the finite-dimensional space
    \[
    H^r_d \coloneqq \Span \set{ x \mapsto x^\mu \st \mu \in (\N \cup \set{ 0 })^d, \Abs{\mu}_1 = r } \subset C(\R).
    \]
    We estate that $H_d^r \subset W_d$. By the multinomial formula, the function
    \[
    p_a(x) \coloneqq \inner{ a, x }^r = (a_1 x_1 + \dots + a_d x_d)^r
    \]
    is in $H^r_d$ and $W_d$ for every $a = (a_1, \dots, a_d) \in \R^d$. Consider the dual space of $H^r_d$, $(H^r_d)^*$, which if $\Span \set{ p_a }_{a \in \R^d} \neq H^r_d$, has a non zero linear function $l$ mapping every $p_a$ to 0. Let us now identify the elements of $(H^r_d)^*$. We know that
    \[
    D^\lambda x^\mu \coloneqq \frac{\partial^r}{\partial x_1^{\lambda_1} \dots \partial x_d^{\lambda_d}} x^\mu = \begin{dcases}
        \mu_1! \dots \mu_d! & \txt{ if } \lambda = \mu \\
        0 & \txt{ otherwise}
    \end{dcases}
    \]
    for all $\lambda = (\lambda_1, \dots, \lambda_d), \mu = (\mu_1, \dots, \mu_d) \in (\N \cup \set{ 0 })^d $ such that $\Abs{\lambda}_1 = \Abs{\mu}_1 = r$. So, for every $l \in (H^r_d)^*$ there exists $q \in H^r_d$ such that $l(p) = q(D) p$ for all $p \in H^r_d$. Let us now achieve the contradiction. Hence,
    \[
    \begin{split}
        l(p_a) = q(D)p_a(x) &= q(D)(a_1 x_1 + \dots + a_d x_d)^r \\
        &= q(D) \sum_{ \substack{\mu \in (\N \cup \set{ 0 })^d \\ \Abs{\mu}_1 = r}} \frac{r!}{\mu_1! \dots \mu_d!} (a_1 x_1)^{\mu_1} \dots (a_d x_d)^{\mu_d} = r! q(a)
    \end{split}
    \]
    so we conclude that $l(p_a) = 0$ for all $a \in \R$ if and only if $q = 0$, or in other words, $l = 0$. It follows that, by Weierstrass' Theorem,
    \[
    C(K) = \overline{ \bigcup_{r = 0}^\infty H_d^r} \subset \overline{ W_d } \subset C(K).
    \]
    for any compact $K \subset \R^d$.

    \underline{Step 1.2:} $V_1$ is dense in $C(\R)$ implies $V_d$ dense in $C(\R^d)$.

    Suppose that $V_1$ is dense in $C(\R)$. Fix $f \in C(\R^d)$, $K \subset \R^d$ a compact and $\varepsilon > 0$. By Step 1.1, there are $g_1, \dots, g_n \in C(\R)$ and $v_1, \dots, v_n \in \R^d$ such that
    \[
    \Norm{f - \sum_{i = 1}^n g_i(\inner{v_i, \; \cdot \;})}_{L^\infty(K)} < \frac{\varepsilon}{2}.
    \]
    Since $\inner{ v_i, \; \cdot \;}$ is continuous function, all the sets $\inner{ v_i, K } \subset \R$ are compact and more specifically, bounded. Assume then that $\inner{v_i , K} \subset [a, b]$ for $i = 1, \dots, n$. By hypothesis, for every $i \in \set{ 1, \dots, n}$, there exists $m_i \in \N$, $a_{i, 1}, \dots, a_{i, m_i}$, $b_{i, 1}, \dots, b_{i, m_1}$, $\lambda_{i, 1}, \dots, \lambda_{i, m_i} \in \R$ such that
    \[
    \Norm{
    g_i - \sum_{j = 1}^{m_i} \lambda_{i, j} \varrho( a_{i, j} \; \cdot \; + b_{i, j} )
    }_{L^\infty([a, b])} < \frac{\varepsilon}{2 n}.
    \]
    We conclude observing that 
    \[
    \sum_{i = 1}^n \sum_{j = 1}^{m_i} \lambda_{i, j} \varrho(\inner{ a_{i, j} a_i, \; \cdot \;} + b_{i, j}) \in V_d
    \]
    and
    \begin{multline*}
        \Norm{f - \sum_{i = 1}^n \sum_{j = 1}^{m_i} \lambda_{i, j} \varrho(\inner{ a_{i, j} a_i, \; \cdot \;} + b_{i, j})}_{L^\infty(K)} \leq \Norm{f - \sum_{i = 1}^n g_i( \inner{ v_i, \; \cdot \; } ) }_{L^\infty(K)} \\ + \sum_{i = 1}^n \Norm{ g_i( \inner{ v_i, \; \cdot \;} ) - \sum_{j = 1}^{m_i} \lambda_{i, j} \varrho(\inner{ a_{i, j} a_i, \; \cdot \;} + b_{i, j}) }_{L^\infty(K)} < \frac{\varepsilon}{2} + n \frac{\varepsilon}{2 n} = \varepsilon.
    \end{multline*}

    \underline{Step 2:} If there exists $\varphi \in \overline{V}_1 \cap C^\infty(\R)$ that is not a polynomial then $V_1$ is dense.

    It is not hard to see that $\varphi(a \; \cdot \; + b) \in \overline{V}_1$ for all $a, b \in \R$. Since
    \[
    \overline{V}_1 \ni \frac{\varphi((a + h) \; \cdot \; + b) - \varphi(a \; \cdot \; + b)}{h} \xrightarrow[h \rightarrow 0]{}\frac{\partial}{\partial a}\varphi(a \; \cdot \; + b) \qquad \txt{ in } C(\R),
    \]
    we conclude that $\frac{\partial}{\partial a} \varphi(a \; \cdot \; + b) \in \overline{V}_1$. This argument works with $\frac{\partial}{\partial a} \varphi(a \; \cdot \; + b) \in C^\infty(\R)$ and so $\frac{\partial^k}{\partial a^k} \varphi(a \; \cdot \; + b) \in \overline{V}_1$. This tells us
    \begin{equation}
    \func{\frac{\partial^k}{\partial a^k} \varphi(a \; \cdot \; + b)}{\R}{\R}{x}{\varphi^{(k)}(a x + b) x^k} \in \overline{V}_1 \label{eq: partials in V1}
    \end{equation}
    for all $a, b \in \R$. Since $\varphi$ is not a polynomial, for every $k \in \N$, there is $b_k \in \R$ such that $\varphi^{(k)}(b_k) \neq 0$. We conclude choosing in \eqref{eq: partials in V1} $a = 0$ and $b = b_k$ that the polynomials are in $\overline{V}_1$ and therefore, $\overline{V}_1$ is dense in $C(\R)$.

    \underline{Step 3:} For every $\varphi \in C^\infty_c(\R)$, the convolution $\varphi * \varrho$ is in $\overline{V}_1$.

    Let $\varphi \in C^\infty(\R)$ be a function such that $\supp \varphi \subset [-R, R]$ for some $R > 0$ and such that $\varphi \neq 0$. Fix $\varepsilon > 0 $ and $r > 0$. Let $U \subset \R$ be the open set from the statement of this theorem and we call $M = \Norm{\varrho}_{L^\infty([-R-r, R+r])} \Norm{\varphi}_{L^\infty(\R)}$. Since $K \coloneqq [-R-r, R+r] \setminus U$ is a compact with Lebesgue measure 0, by definition of Lebesgue measure, there are $I_1, \dots, I_{n} \subset \R$ open intervals pairwise disjoint such that
    \begin{equation}\label{eq: discontinuity measure control}
        K \subset \bigcup_{I = 1}^{n} I_i \eqqcolon I \qquad \txt{ and } \qquad \Abs{I} = \sum_{i = 1}^n \Abs{I_i} < \frac{\varepsilon / 3}{2M}.
    \end{equation}
    We know that $J \coloneqq [-R-r, R+r] \setminus I$ is a compact and $\varrho$ is continuous in $J$, therefore, there exists $\delta > 0$ such that
    \begin{equation} \label{eq: uniform continuity}
    \Abs{\varrho(y) - \varrho(z)} < \frac{\varepsilon / 3}{\Norm{\varphi}_{L^1(\R)}}
    \end{equation}
    for every $y, z \in J$ such that $\Abs{y - z} < \delta$.
    We can find $J_1, \dots, J_m \subset [-R - r, R + r]$ pairwise disjoint intervals (not necessarily open nor closed) such that
    \begin{equation} \label{eq: uniform continuity in mini intervals}
        [-R - r, R + r] = \bigcup_{j = 1}^m J_j \qquad \txt{ and } \qquad \Abs{J_j} < \min \left \{ \frac{\varepsilon / 3}{4 n M}, \delta \right \}
    \end{equation}
    We observe that, for any $x \in \R$, 
    \begin{equation} \label{eq: discontinuity intersects}
    \Abs{ \bigcup_{x - J_j \cap I \neq \emptyset} x - J_j } \leq \sum_{i = 1}^n \left (2 \max_{j = 1, \dots, n} \Abs{J_j} + \Abs{I_i} \right ) < \frac{\varepsilon / 3}{2 M} + \Abs{I}
    \end{equation}
    which follows from the fact that $x - J_j$ and $I_i$ are intervals for $i = 1, \dots, n$ and $j = 1, \dots, m$. We conclude with the fact that
    \[
    \sum_{j = 1}^m \varrho(x - y_j) \int_{J_j} \varphi(y)dy \in V_1
    \]
    for fixed $y_j \in J_j$ for $j = 1, \dots, m$ and that, for any $x \in [-r, r]$,
    \begin{align*}
        \Abs{\varrho * \varphi(x) - \sum_{j = 1}^m \varrho(x - y_j) \int_{J_j} \varphi(y)dy} =& \left \lvert \sum_{j = 1}^m \int_{J_j} (\varrho(x - y) - \varrho(x - y_j) ) \varphi(y)dy \right \rvert \\
        \leq & \sum_{x - J_j \cap I \neq \emptyset} \int_{J_j} \Abs{\varrho(x - y) - \varrho(x - y_j)} \Abs{\varphi(y)}dy \\
        & + \sum_{x - J_j \cap I = \emptyset} \int_{J_j} \Abs{\varrho(x - y) - \varrho(x - y_j)} \Abs{\varphi(y)}dy \\
        \leq & 2M \Abs{\bigcup_{x - J_j \cap I \neq \emptyset} x - J_j} + \frac{\varepsilon / 3}{\Norm{\varphi}_{L^1(\R)}}\Norm{\varphi}_{L^1(\R)} \\
        \leq & 2M \left ( \frac{\varepsilon / 3}{2M} + \Abs{I} \right ) + \frac{\varepsilon}{3} \\
        = & \frac{2\varepsilon}{3} + 2M \Abs{I} < \varepsilon
    \end{align*}
    where we used in the second inequality the uniform continuity in $x - J_j$ \eqref{eq: uniform continuity} since $\Abs{J_j} < \delta$ and in the third and forth ones, the bounds on the measure of the different sets, given by \eqref{eq: discontinuity intersects} and \eqref{eq: discontinuity measure control} respectively.

    \underline{Step 4:} If for all $\varphi \in C^\infty_c(\R)$ $\varrho * \varphi$ is a polynomial, then $\varrho$ is a polynomial a.e.

    \underline{Step 4.1:} If for all $\varphi \in C_c^\infty(\R)$ $\varrho * \varphi$ is a polynomial, then their degree are bounded.

    For a compact interval $I$, consider the space $C^\infty(I)$ with the metric
    \[
    d(\varphi, \psi) \coloneqq \sum_{k = 0}^\infty 2^{-k} \min \left \{  \Norm{\varphi^{(k)} - \psi^{(k)}}_{L^\infty(I)}, 1 \right \}.
    \]
    This space $(C^\infty(I), d)$ is a complete metric space. The fact that $d$ satisfies the triangle inequality follows from the subadditive of $\min \set{ \Normm_{L^\infty(I)}, 1 }$ inherited of $\Normm_{L^\infty(I)}$ and the completeness of the space from the completeness of $(C(I), \Normm_{L^\infty(I)})$.

    We define the subspaces
    \[
    W_k \coloneqq \set{ \varphi \subset C^\infty(I) \st \deg ( \varphi * \varrho ) \leq k }
    \]
    for $k \in \N \cup \set{ 0 }$. By hypothesis, we know that
    \[
    \bigcup_{k = 0}^\infty W_k = C^\infty_c (I)
    \]
    The Baire's Category Theorem (see Theorem 5.3 from \cite{folland1999real}) states that a complete metric space is not a union of sets whose closure has empty interior. Therefore, it exists $r \in \N \cup \set{ 0 }$ such that $W_r$ has a non empty interior in the topology induced by $d$. Being $W_r$ a vector space, this means that $W_r = C^\infty(I)$.

    By translation, the same bound on the degree works for any other compact interval $J$ with same or less length. For a compact interval $J$ with more length that $I$, we choose a convenient partition of unity, that is, we can find compact intervals $I_1, \dots, I_n$ with same length as $I$ and $\varphi_k \in C^\infty(I_k)$ for $k = 1, \dots, n$ such that
    \begin{align*}
        I \subset \bigcup_{k = 1}^n I_k && \txt{ and } && 1 = \sum_{k = 1}^n \varphi_k.
    \end{align*}
    Therefore, for any $\varphi \in C^\infty(J)$,
    \[
    \deg(\varphi * \varrho) = \deg \left ( \sum_{k = 1}^n (\varphi \varphi_k) * \varrho \right ) \leq \max_{k = 1, \dots, n} \deg \left ( (\varphi \varphi_k) * \varrho \right ) \leq r
    \]
    since $\varphi \varphi_k \in C^\infty(I_k)$ and $\Abs{I_k} = \Abs{I}$.

    \underline{Step 4.2:} For all $\varphi \in C^\infty_c(\R)$ and $f: \R \longrightarrow \R$ a locally bounded function continuous on an open set $U$, then $t^{-1} \varphi(t^{-1} \; \cdot \;) * f$ converges uniformly in compacts contained in $U$ to $f \cdot \int \varphi$ as $t \rightarrow 0$.

    Let $\varphi \in C^\infty_c(E)$ with $E$ a compact and $K \subset U$ a compact. For a fixed $r < \dist(K, \partial U)$, we define the compact set
    \[
    F \coloneqq \set{ x + y \st x \in K \txt{ and } \Abs{y} \leq r } \subset U.
    \]
    Since $f$ is uniformly continuous in $F$, given $\varepsilon > 0$, there exists $\delta > 0$ such that
    \[
    \Abs{f(x) - f(y)} < \varepsilon / \Norm{\varphi}_1
    \]
    if $x, y \in F$ and $\Abs{x - y} < \delta$. Since $E$ is bounded, for $t > 0$ small enough,
    \[
    \set{ x - tz \st x \in K \txt{ and } z \in E} \subset F.
    \]
    If $\varphi_t = t^{-1} \varphi( t^{-1} \; \cdot \; )$, then, for all $x \in K$ and $t \in (0, \delta)$ small enough,
    \begin{align*}
        \Abs{ \varphi_t * f(x) - f(x) \int \varphi(y) dy } &= \Abs{ \int f(x - y) \varphi_t(y) dy - f(x) \int \varphi_t(y) dy } \\
        &= \Abs{ \int_E ( f(x - t z) - f(x) ) \varphi(z) dz } \\
        &\leq \frac{\varepsilon}{\Norm{\varphi}_1} \int_E \Abs{\varphi(z)} dz = \varepsilon.
    \end{align*}
    
    \underline{Step 4.3:}  If for all $\varphi \in C^\infty_c(\R)$ $\varrho * \varphi$ is a polynomial, then $\varrho$ is a polynomial a.e.

    Let $\varphi \in C^\infty_c(\R)$ be a function with integral 1, $\varphi_t = t^{-1} \varphi(t^{-1} \; \cdot \;)$ for $t > 0$ and $r \in \N \cup \set{0}$ the bound over the degree of the convolutions with $\varrho$, as found in Step 4.1. We know that, by hypothesis,
    \[
    \varrho * \varphi_t(x) = \sum_{k = 0}^r a_k(t) x^k.
    \]
    By Step 4.2, it convergences to $\varrho$ in a compact $K \subset U$ and so, $\varrho * \varphi_t(x) \longrightarrow \varrho(x)$ for all $x \in K$ when $t \rightarrow 0$.
    
    The space $(\mathcal{P}^{(r + 1, 1)}, \Normm_{L^\infty(K)})$ is a finite dimensional space and so, the norm $\Abs{\; \cdot \;}_\infty$ defined for ever polynomial of degree $r$ or less $P(x) = \sum\limits_{k = 0}^r a_k x^k$ as
    \[
    \Abs{P}_\infty = \max_{k = 0, \dots, r} \Abs{a_k}
    \]
    is equivalent to $\Norm{\; \cdot \;}_{L^\infty(K)}$. Therefore, the sequence $\set{(a_0(t), \dots, a_r(t))}_{t \in (0, 1)}$ has a convergent subsequence to some vector $(\alpha_0, \dots, \alpha_r) \in \R^{r + 1}$. By Step 4.2 and the uniqueness of the convergence, we know that, for all $x \in U$
    \[
    \varrho * \varphi_t(x) \xrightarrow[t \rightarrow 0]{} \varrho(x) = \sum_{k = 0}^r \alpha_k x^k.
    \]
    So, since $\R \setminus U$ has Lebesgue measure 0, $\varrho$ is polynomial almost everywhere.

    \underline{Step 5:} $V_d$ is dense in $C(\R)$.
    
    To prove that $V_d$ is dense, by Step 1 and Step 2, we only need to show that $\overline{V}_1 \cap C^\infty(\R)$ contains at least a non polynomial function. Since $\varrho$ is not a polynomial a.e., by Step 4, there exists $\varphi \in C^\infty_c(\R)$ such that $\varrho * \varphi$ is not a polynomial. By the fact that $\varrho \in L^1_{\loc}(\R)$ and $\varphi \in C^\infty_c(\R)$, we get that $\varrho * \varphi \in C^\infty(\R)$ (a minor modification of Proposition 8.10 from \cite{folland1999real}). By Step 3, we conclude that $\varrho * \varphi \in \overline{V}_1 \cap C^\infty(\R)$.
\end{proof}

\begin{corollary} \label{cor: NN satisfies P5}
    Let $\varrho: \R \longrightarrow \R$ be a function satisfying all the hypotheses of Theorem \ref{th: Density of NN}, $p \in (0, \infty)$, $k \in \N$ and $\Omega \subset \R^d$ a bounded set. Then, the set
    \[
    \set{R(\phi) \in L^p(\Omega) \st \phi \txt{ is a strict } \varrho-\txt{NN}}
    \]
    is dense in $L^p(\Omega)^k$.
\end{corollary}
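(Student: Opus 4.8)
The plan is to derive the statement from Theorem \ref{th: Density of NN} by the classical three-step density scheme, transferring the problem from $L^p(\Omega)^k$ to uniform approximation on the compact set $\overline{\Omega}$. The first ingredient is the observation recorded after Theorem \ref{th: Density of NN}: every $g \in V_d = \Span\set{\varrho(\inner{a, \; \cdot \;} + b) \st a \in \R^d, b \in \R}$ is the realization of a strict $\varrho$-NN with two layers (the hidden layer lists the affine pre-activations $\inner{a_i, \; \cdot \;} + b_i$, the output layer is the linear combination, and $\alpha_2 = \Id$). Moreover, given $g_1, \dots, g_k \in V_d$, the map $x \longmapsto (g_1(x), \dots, g_k(x))$ is again the realization of a single two-layer strict $\varrho$-NN: one takes as hidden layer the union of all pre-activations occurring in $g_1, \dots, g_k$ and lets the $i$-th row of the output matrix select the coefficients of $g_i$ (and zero elsewhere). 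Hence it suffices to approximate, in $\Norm{\; \cdot \;}_p$, an arbitrary $f \in L^p(\Omega)^k$ by a $k$-tuple of elements of $V_d$.

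Second, I would reduce to a continuous target. Since $\Omega$ is bounded, $\Abs{\Omega} < \infty$ and $\overline{\Omega}$ is compact, and the restrictions to $\Omega$ of continuous functions $\R^d \longrightarrow \R^k$ are dense in $L^p(\Omega)^k$ for every $p \in (0, \infty)$. For $p \geq 1$ this is classical; for $p \in (0, 1)$ one notes that $(f, g) \longmapsto \sum_{i} \int_\Omega \Abs{f_i - g_i}^p$ is a translation-invariant metric inducing the topology of $\Norm{\; \cdot \;}_p$, approximates $f$ first by simple functions (dominated convergence) and then each indicator of a measurable set by a continuous function via the regularity of Lebesgue measure together with Urysohn's lemma (see \cite{folland1999real}). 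Thus, fixing $f$ and $\varepsilon > 0$, I pick a continuous $g = (g_1, \dots, g_k)$ with $\Norm{f - g|_\Omega}_p$ as small as needed.

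Third, I would invoke the density result. By Theorem \ref{th: Density of NN} (using its density conclusion, for which, besides \ref{enu: almost continuous}--\ref{enu: locally bounded}, one also needs that $\varrho$ is not almost everywhere a polynomial), $V_d$ is dense in $C(\R^d)$, i.e.\ uniformly on compacts. Applied to each $g_i$ on $\overline{\Omega}$ it gives $h_i \in V_d$ with $\Norm{g_i - h_i}_{L^\infty(\overline{\Omega})} < \eta$, and then $h \coloneqq (h_1, \dots, h_k) = R(\phi)$ for some strict $\varrho$-NN $\phi$ by the first step, with $\Norm{g|_\Omega - h|_\Omega}_p \leq (k \Abs{\Omega})^{1/p} \eta$. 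Combining this with the second step via the (quasi-)triangle inequality yields $\Norm{f - R(\phi)}_p < \varepsilon$ once $\eta$ is small enough; since $R(\phi) \in \Mtt_{M(\phi)}(L^p(\Omega)^k, \varrho)$, this establishes that $\bigcup_n \Mtt_n(L^p(\Omega)^k, \varrho)$ is dense in $L^p(\Omega)^k$, which is the claim.

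The genuinely hard input is Theorem \ref{th: Density of NN} itself, so the work here is essentially bookkeeping. The one point that deserves care is the regime $p \in (0, 1)$, where $L^p(\Omega)^k$ is merely a quasi-Banach space: there the density of continuous functions must be argued by the measure-theoretic route above rather than by appealing to Banach-space density theorems, and one should check that density composes across the three reductions — which it does, since each of the spaces involved carries an honest metric inducing its topology.
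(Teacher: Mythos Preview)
Your proposal is correct and follows essentially the same route as the paper: approximate $f$ first by a continuous function, then approximate that continuous function uniformly on a compact set using Theorem~\ref{th: Density of NN}, and combine via the quasi-triangle inequality. The only cosmetic differences are that the paper reduces to $k = 1$ at the outset and uses the density of $C^\infty_c(\R^d)$ in $L^p(\R^d)$ (citing \cite{folland1999real}, Proposition~8.17, which also covers $p \in (0,1)$), whereas you handle the vector-valued case explicitly by stacking the hidden layers and argue the continuous-function density via simple functions and Urysohn's lemma.
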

\begin{remark}
    This corollary implies that
    \[
    \bigcup_{n \in \N} \Mtt_n(L^p(\Omega)^k, \varrho)
    \]
    is dense and so $(\Mtt_n(L^p(\Omega)^k, \varrho))_{n = 0}^\infty$ satisfies Property \ref{enu: Sigmainfty density}.
\end{remark}
\begin{proof}
    We can suppose that $k = 1$. Fix $f \in L^p(\Omega)$ and $\varepsilon > 0$. It is well known that $C^\infty_c(\R^d)$ is dense in $L^p(\R^d)$ (the proof of Proposition 8.17 in \cite{folland1999real} works as well for $p \in (0, 1)$). Let $\varphi \in C^\infty_c(\R^d)$ such that
    \[
    \Norm{f - \varphi}_{L^p(\R^d)} < \frac{\varepsilon}{2 C_p}
    \]
    and $\supp \varphi, \Omega \subset K$ where $K \subset \R^d$ is compact, $f = 0$ in $\R^d \setminus \Omega$ and $C_p = 2^{1/p}$ if $p \in (0, 1)$ and $C_p = 1$ if $p \geq 1$. Since $\varrho$ satisfies all conditions of Theorem \ref{th: Density of NN}, we know that there exists a $\varrho$-realization $g: \R^d \longrightarrow \R$ such that
    \[
    \Norm{\varphi - g}_{L^\infty(K)} < \frac{\varepsilon}{2 C_p \Abs{\Omega}}
    \]
    Then
    \[
    \Norm{f - g}_{L^p(\Omega)} \leq C_p \left ( \Norm{f - \varphi}_{L^p(\Omega)} + \Norm{\varphi - g}_{L^p(\Omega)} \right ) < C_p \left ( \frac{\varepsilon}{2C_p} + \frac{\varepsilon}{2 C_p} \right ) < \varepsilon.
    \]
\end{proof}

\subsection{Besov Space embedded in the Neural Network Approximation Space.}

We are now going to see that the Besov Space introduce in Definition \ref{def: Besov} is embedded into the approximation space of NNs.

\begin{lemma} \label{lemma: characteristic function almost varrho NN}
    Let $r, d \in \N$ and $p \in (0, \infty)$. Then there exists $c \in \N$ such that, for every $\varepsilon > 0$, there is $\varphi_\varepsilon \in \Mtt_c(L^p(\R^d), \varrho_r)$ such that
    \[
    \Norm{\chi_{[0, 1]^d} - \varphi_\varepsilon}_{L^p(\R^d)} < \varepsilon.
    \]
\end{lemma}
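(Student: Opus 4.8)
The plan is to build $\varphi_\varepsilon$ as a $\varrho_r$-NN approximating $\chi_{[0,1]^d}$ by first noting that $\chi_{[0,1]^d} = \chi_{(0,1]}(x_1)\cdots\chi_{(0,1]}(x_d)$ differs from $\chi_{[0,1]^d}$ only on a set of measure zero, so it suffices to approximate the product of $d$ one-dimensional "bump" functions in $L^p(\R^d)$. The key building block is the B-spline $\beta^{(r-1)}$ from Definition \ref{eq: def B spline}, which is an \emph{exact} $\varrho_{r-1}$-NN realization (it is written explicitly as a finite linear combination of shifted $\varrho_{r-1}$'s, hence a strict two-layer $\varrho_{r-1}$-NN with a fixed number of weights depending only on $r$). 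However, we need $\varrho_r$ rather than $\varrho_{r-1}$; I would handle this either by using $\beta^{(r)}$ (which is compactly supported, continuous, and piecewise polynomial of degree $r$, and is a $\varrho_r$-NN with a bounded number of weights) or, more robustly, by the observation that $\varrho_{r-1}$ can be represented via $\varrho_r$ up to controlled error, or simply by rescaling: dilating $\beta^{(r)}$ so that a suitable linear combination of its integer shifts approximates $\chi_{(0,1]}$ well in $L^p(\R)$. Since $\sum_{j}\beta_{k,j}^{(r,1)} \equiv 1$ (Proposition \ref{prop: B spline properties partition of unity}) and each $\beta_{k,j}^{(r,1)}$ has support of width $r 2^{-k}$, a partial sum $\sum_{j \in J_k}\beta_{k,j}^{(r,1)}$ over the indices $j$ with support inside $[0,1]$ converges to $\chi_{(0,1]}$ in $L^p(\R)$ as $k\to\infty$, with the error concentrated in a boundary layer of width $O(r2^{-k})$, hence of $L^p$-size $O((r 2^{-k})^{1/p})$.

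The next step is to pass from one dimension to $d$ dimensions. I would write the approximant to $\chi_{[0,1]^d}$ as a product $g_\varepsilon(x) = \prod_{i=1}^d h_{k}(x_i)$ where $h_k$ is the one-dimensional spline approximant just constructed, and then realize this product as a $\varrho_r$-NN. The product of $d$ bounded scalars can be implemented (approximately, to any accuracy) by a $\varrho_r$-NN: for $r \geq 2$ one has an exact two-term representation of $x^2$ via $\varrho_2$, hence exact scalar multiplication and, by composing a balanced binary tree of $d-1$ multiplications, an exact product of $d$ scalars with a number of weights depending only on $d$ and $r$; for $r=1$ one instead uses the sawtooth/$\ReLU$ square approximation from Proposition \ref{sq NN} and Corollary \ref{scalar mult NN def and bound}, which approximates products to arbitrary accuracy with a number of weights that grows only logarithmically in the accuracy — but crucially, once we freeze the accuracy needed to beat $\varepsilon$ we can absorb that into a finite constant, because... no, wait: that is exactly the subtle point. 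The statement demands a \emph{single} constant $c$ that works for \emph{all} $\varepsilon$. So the architecture must have \emph{fixed} size while only the \emph{weight values} vary with $\varepsilon$. For $r\geq 2$ this is immediate (products are exact, and only the shift parameters $j$, dilation $2^k$, and coefficients of the spline combination change with $\varepsilon$ — but the \emph{number} of nonzero weights changes too, since the number of shifts $|J_k|$ grows with $k$!).

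Therefore the real obstacle — and the heart of the proof — is achieving a \emph{bounded number of weights independent of $\varepsilon$}. The resolution is that one does not need the full partition-of-unity sum: a \emph{single} suitably placed and dilated B-spline already approximates $\chi_{(0,1]}$ in $L^p$. Concretely, $\beta^{(r-1)}(t/\eta)$ for small $\eta$, appropriately translated, has value $1$ on most of a unit interval and transitions to $0$ over a window of width $O(\eta)$; more precisely one takes a fixed shape function built from $O(1)$ copies of $\varrho_r$ — for instance the continuous trapezoidal function equal to $1$ on $[\eta, 1-\eta]$, linear on $[0,\eta]\cup[1-\eta,1]$, zero outside $[0,1]$, which for $r = 1$ is a combination of five $\ReLU$'s and for general $r$ can be taken as a clamped/smoothed version using $O(1)$ applications of $\varrho_r$ — whose $L^p(\R)$ distance to $\chi_{(0,1]}$ is $O(\eta^{1/p})\to 0$ as $\eta\to 0$, while the number of nonzero weights stays fixed at some $c_1(r)$. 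Taking the $d$-fold product via the fixed-size product network then gives $\varphi_\varepsilon$ with $M(\varphi_\varepsilon) \leq c$ for a constant $c = c(r,d)$, and $\Norm{\chi_{[0,1]^d} - \varphi_\varepsilon}_{L^p(\R^d)} \lesssim \eta^{1/p}\to 0$, choosing $\eta$ small enough to beat $\varepsilon$. I would organize the write-up as: (1) construct the one-dimensional fixed-size trapezoidal $\varrho_r$-NN $\tau_\eta$ and bound $\Norm{\chi_{(0,1]} - \tau_\eta}_{L^p(\R)}$; (2) build the fixed-size $d$-fold product $\varrho_r$-NN and invoke its approximation/exactness properties (Corollary \ref{scalar mult NN def and bound} for $r=1$, the exact $\varrho_2$-square for $r\geq 2$), controlling the error of replacing an exact product of the $\tau_\eta$'s by the network output; (3) combine via the triangle inequality (using the quasi-norm constant when $p<1$) and the fact that all factors are uniformly bounded, choosing $\eta$ small.
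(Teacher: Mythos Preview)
Your one-dimensional construction is essentially the same as the paper's: a fixed-shape ``trapezoidal'' bump $\tau_\eta$ built from $O_r(1)$ shifts of $\varrho_r$ that equals $1$ on $[\eta,1-\eta]$, vanishes outside $[0,1]$, and converges to $\chi_{(0,1]}$ in $L^p(\R)$ as $\eta\to 0$. The paper writes this as $\psi_\delta(x)=\sigma(rx/\delta)-\sigma(r(x+\delta-1)/\delta)$ where $\sigma$ is the antiderivative of $\beta^{(r-1)}$, itself a two-layer $\varrho_r$-realization with $r+1$ terms.

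The divergence is in how you pass to $d\ge 2$. You propose to realize the tensor product $\prod_{i=1}^d\tau_\eta(x_i)$ by a multiplication network. The paper instead applies the sigmoidal $\sigma$ to the \emph{sum}:
\[
\phi_\delta(x)=\sigma\Bigl(r\Bigl(\textstyle\sum_{i=1}^d\psi_\delta(x_i)-d+1\Bigr)\Bigr),
\]
which is a soft ``AND'' gate: if every $\psi_\delta(x_i)=1$ the argument is $r$ and $\sigma$ outputs $1$; if any $x_i\notin[0,1]$ the argument is $\le 0$ and $\sigma$ outputs $0$. This is an exact three-layer $\varrho_r$-NN of size depending only on $(r,d)$, for \emph{every} $r\ge 1$.

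Your product route works cleanly when $r\ge 2$, since $x^2$ (hence $xy$) is an exact $\varrho_r$-realization with a fixed number of weights, and a binary tree of $d-1$ exact multiplications has size $O_{r,d}(1)$. But for $r=1$ and $d\ge 2$ there is a genuine gap that you yourself flagged and did not close: Corollary~\ref{scalar mult NN def and bound} gives a $\ReLU$-network whose number of weights is $\Theta(\log(1/\varepsilon))$, so a \emph{fixed}-size network only approximates products to a \emph{fixed} accuracy $\delta_0>0$. On the core $[\eta,1-\eta]^d$ (which has measure bounded below) your output would then differ from $1$ by up to $(d-1)\delta_0$, and the $L^p$ error cannot be pushed below a positive constant however small you take $\eta$. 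The fix is precisely to abandon the product: either use the paper's $\sigma$-of-sum, or observe that $\min_i \tau_\eta(x_i)$ is an exact $\ReLU$-realization of fixed size (via $\min(a,b)=a-\ReLU(a-b)$ iterated $d-1$ times) and already equals $1$ on $[\eta,1-\eta]^d$ and $0$ off $[0,1]^d$.
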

Note that $c$ is independent of $\varepsilon > 0$.
\begin{proof}
    Fix some $\varepsilon > 0$. To find such $\varphi_\varepsilon$, we construct various intermediate functions. Consider the function
    \[
    \func{\sigma}{\R}{\R}{x}{\frac{1}{r!} \sum_{k = 0}^r \binom{r}{k} (-1)^k \varrho_r(x - k)}
    \]
    which satisfies that $\sigma' = \beta^{(r - 1)}$ and $\sigma(x) = 0$ if $x \leq 0$, $\sigma(x) = 1$ for $x \geq r$. By \eqref{eq: B spline is a convolution}, $\sigma' \geq 0$ and so $\sigma$ increases, meaning that $0 \leq \sigma \leq 1$. It follows then that, for any $\delta \in (0, 1/2)$, the function
    \[
    \func{\psi_\delta}{\R}{\R}{x}{\sigma \left( r \frac{x}{\delta} \right ) - \sigma \left ( r \frac{x + \delta - 1}{\delta} \right )}
    \]
    is a $\varrho_r$-realization satisfing $0 \leq \psi_\delta \leq 1$, $\psi_\delta(x) = 0$ if $x \not \in [0, 1]$ and $\psi_\delta(x) = 1$ if $x \in [\delta, 1 - \delta]$. If $d = 1$, we pick $\varphi_{\varepsilon} = \psi_{\varepsilon / 2}$. For $d \geq 2$, the function
    \[
    \phi_\delta(x) = \sigma \left ( r \left ( \sum_{k = 1}^d \psi_\delta(x_k) - d + 1 \right ) \right )
    \]
    is almost the target $\varrho_r$-realization. If $x \not \in [0, 1]^d$, then $x_k \not \in [0, 1]$ for some $k \in \set{1, \dots, d}$ and so
    \[
    \sum_{k = 1}^d \psi_\delta(x_k) \leq d - 1
    \]
    implying that $\phi_\delta(x) = 0$. For $x \in [\delta, 1 - \delta]^d$, then
    \[
    \sum_{k = 1}^d \psi_\delta(x) = d
    \]
    and so $\phi_\delta(x) = 1$. Since $\Abs{[0, 1]^d \setminus [\delta, 1 - \delta]^d} \longrightarrow 0$ as $\delta \rightarrow 0^+$, we know that for every $\varepsilon > 0$ there exists $\delta(\varepsilon) > 0$ such that
    \[
    \Norm{\chi_{[0, 1]^d} - \phi_{\delta(\varepsilon)}}_{L^p(\R^d)} < \varepsilon
    \]
    and so $\varphi_\varepsilon = \phi_{\delta(\varepsilon)}$. The fact that $\varphi_\varepsilon$ is a $\varrho_r$-realization is obvious and that the value of $\varepsilon > 0$ does not affect on the bound of the number of weights too. So, there is $c \in \N$ such that $\varphi_\varepsilon \in \Mtt_c(L^p(\R^d), \varrho_r)$ for every $\varepsilon > 0$.
\end{proof}

This lemma helps us proving an intermediate embedding.

\begin{lemma} \label{lemma: intermediate approximation space}
    Let $r \in \N$, $t \in \N \cup \set{0}$, $\alpha, p \in (0, \infty)$, $q \in (0, \infty]$, $\Omega \subset \R^d$ a bounded set and
    \[
    B_n^t \coloneqq \set{ \sum_{k = 1}^n \lambda_k \beta^{(t)}_d(a_k \; \cdot \; + b_k) \st \lambda_k \in \R, a_k > 0, b_k \in \R^d \txt{ for } k = 1, \dots, n }
    \]
    where $\beta_d^{(t)}(x) = \beta^{(t)}(x_1) \dots \beta^{(t)}(x_d)$ being $\beta^{(t)}$ a B-spline introduce in Definition \ref{eq: def B spline}. Then,
    \[
    A^\alpha_q(L^p(\Omega), (B_n^t)_{n = 0}^\infty) \hookrightarrow M_{p,q}^\alpha(\Omega, \varrho_r)
    \]
    if $t = 0$ or $t = r \geq \min \set{d, 2}$.
\end{lemma}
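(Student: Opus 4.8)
The plan is to invoke the standard embedding criterion for approximation spaces: if $\Sigma = (\Sigma_n)$ and $\Sigma' = (\Sigma'_n)$ are families of subsets of a quasi-Banach space $X$ with $\Sigma_n \subseteq \overline{\Sigma'_{cn}}$ (closure in $X$) for a fixed $c \in \N$ and all $n$, then $A^\alpha_q(X, \Sigma) \hookrightarrow A^\alpha_q(X, \Sigma')$. This holds because $E(f, \Sigma'_{cn})_X = E(f, \overline{\Sigma'_{cn}})_X \le E(f, \Sigma_n)_X$, so monotonicity of $\Normm_{A^\alpha_q(X, \cdot)}$ together with its invariance, up to equivalence, under the re-indexing $\Sigma'_n \rightsquigarrow \Sigma'_{cn}$ (a standard property of approximation spaces, cf. \cite{devore1993constructive}) gives $\Normm_{A^\alpha_q(X, \Sigma')} \sleq \Normm_{A^\alpha_q(X, \Sigma)}$, the embedding being realized by the (linear, injective) identity map. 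Applying this with $X = L^p(\Omega)$, $\Sigma_n = B_n^t$ and $\Sigma'_n = \Mtt_n(L^p(\Omega), \varrho_r)$, it suffices to produce $c \in \N$ such that $B_n^t \subseteq \overline{\Mtt_{cn}(L^p(\Omega), \varrho_r)}$ in $L^p(\Omega)$ for every $n \ge 1$.

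\textbf{Single atom.} I would first show that each generator $x \mapsto \lambda\beta^{(t)}_d(a x + b)$ (with $\lambda \in \R$, $a > 0$, $b \in \R^d$) is approximated in $L^p(\Omega)$, to arbitrary accuracy, by a $\varrho_r$-realization whose number of weights is bounded by a constant $c_0 = c_0(r, d)$ independent of $\lambda, a, b$ and of the accuracy, and all of which have a common depth $L_0 = L_0(r, d)$. For $t = 0$ the atom is $\lambda$ times the characteristic function of a box; Lemma~\ref{lemma: characteristic function almost varrho NN} provides, for each $\eta > 0$, a $\varrho_r$-NN $\phi$ with $M(\phi) \le c$ and $\Norm{\chi_{[0, 1]^d} - R(\phi)}_{L^p(\R^d)} < \eta$, and precomposing $\phi$ with $x \mapsto a x + b$ (absorbed into the first affine layer) and postcomposing with multiplication by $\lambda$ (absorbed into the last) changes $M(\phi)$ only by a bounded factor while multiplying the $L^p(\R^d)$-error by $\Abs{\lambda} a^{-d/p}$, which is compensated by shrinking $\eta$ (and restriction to $\Omega$ only decreases the error). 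For $t = r$ the atom is realized \emph{exactly}: by Definition~\ref{eq: def B spline}, $\beta^{(r)}$ is a fixed finite linear combination of the functions $\varrho_r(\;\cdot\; - k)$, hence a depth-$2$ $\varrho_r$-realization, which already settles the case $d = 1$. For $d \ge 2$ (so $r \ge 2$ by hypothesis), I would realize $\beta^{(r)}(x_1), \dots, \beta^{(r)}(x_d)$ in parallel and then form their product by iterating the polarization identity $z_1 \cdots z_r = \tfrac{1}{r!}\sum_{S \subseteq \{1, \dots, r\}} (-1)^{r - \Abs{S}} \bigl( \sum_{j \in S} z_j \bigr)^r$ (padding with $1$'s when $d < r$, grouping $r$ factors at a time over $\Ceil{\log_r d}$ stages when $d > r$) together with the exact identity $w^r = \varrho_r(w) + (-1)^r \varrho_r(-w)$; this produces a $\varrho_r$-NN of size bounded in terms of $r, d$ only whose realization equals $\beta^{(r)}_d$ on all of $\R^d$, and affine reparametrization and scaling again cost only a bounded factor.

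\textbf{Assembling $n$ atoms.} Given $g = \sum_{k = 1}^n \lambda_k \beta^{(t)}_d(a_k \;\cdot\; + b_k) \in B_n^t$, pick $\varrho_r$-NNs $\psi_1, \dots, \psi_n$ as above, all of the common depth $L_0$, with $\Norm{g - \sum_{k = 1}^n R(\psi_k)}_{L^p(\Omega)} < \varepsilon$; this is possible by the $L^p$ quasi-triangle inequality (taking each individual error below $\varepsilon\, n^{-\max\{1, 1/p\}}$) together with the fact that the per-atom budget $c_0$ is independent of the accuracy, and one may take $\varepsilon = 0$ when $t = r$. Since all $\psi_k$ have depth $L_0$, their parallelization $P(\psi_1, \dots, \psi_n)$ is a $\varrho_r$-NN with block-diagonal affine maps (the construction of Definition~\ref{def: paralelization} is activation-agnostic), so its number of weights equals $\sum_{k = 1}^n M(\psi_k) \le c_0 n$, the analogue of Lemma~\ref{bounds para concat}~$(b.7)$; composing with a final affine layer summing the $n$ scalar outputs adds at most $n$ weights. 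The result is a $\varrho_r$-NN realizing $\sum_k R(\psi_k)$ with at most $(c_0 + 1) n$ weights, whence $g \in \overline{\Mtt_{(c_0 + 1) n}(L^p(\Omega), \varrho_r)}$ and we take $c = c_0 + 1$.

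\textbf{Main obstacle.} The crux is the \emph{exact} bounded-size realization of the tensor-product B-spline $\beta^{(r)}_d$ for $d \ge 2$, i.e.\ performing the $d$-fold multiplication of the univariate factors with finitely many $\varrho_r$-units; this is exactly where the hypothesis $r \ge \min\{d, 2\}$ is used, since for $r = 1$ and $d \ge 2$ the realization of a $\varrho_1 = \ReLU$-NN is continuous and piecewise affine and cannot equal — nor even approximate, within a bounded weight budget as $\varepsilon \to 0$ — the degree-$d$ function $\beta^{(1)}_d$. The remaining points (tracking $\Normm_0$ under affine reparametrization, the quasi-triangle inequality for $p < 1$, and transferring the parallelization estimates from $\ReLU$ to $\varrho_r$, which only uses that a $\varrho$-NN may carry channels through via the $\Id$ activation) are routine.
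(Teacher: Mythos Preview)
Your approach is essentially the same as the paper's: establish $B_n^t \subseteq \overline{\Mtt_{cn}(L^p(\Omega),\varrho_r)}$ for a fixed $c$ by approximating/realizing a single atom with bounded cost (via Lemma~\ref{lemma: characteristic function almost varrho NN} for $t=0$, exact realization for $t=r$) and then stacking $n$ copies in parallel, then deduce the norm inequality. Two minor differences worth noting: (i) the paper spends its Step~1 verifying that $(B^t_n)_n$ satisfies \ref{enu: Sigma0 0}--\ref{enu: Sigmainfty density} so that the source space is genuinely quasi-normed, which you implicitly assume; (ii) for $t=r$ and $d\ge 2$ the paper realizes the multiplication via the binary identity $xy=\tfrac14((x+y)^2-(x-y)^2)$ together with the observation that every polynomial of degree at most $r$ is a $\varrho_r$-realization (since $(x-a)^r=\varrho_r(x-a)+(-1)^r\varrho_r(a-x)$), iterating to get the $d$-fold product, whereas you invoke the full $r$-ary polarization identity directly. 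Both routes give a fixed-size exact realization of $\beta^{(r)}_d$, so the content is the same.
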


\begin{proof}
    \underline{Step 1:} Both approximation spaces are quasi-normed spaces.

    This is obvious for $M^\alpha_{p, q}(\Omega, \varrho_r)$ by Proposition \ref{prop: NN satisfies P1 to P4}, Corollary \ref{cor: NN satisfies P5} and Theorem \ref{prop: NN satisfies P1 to P4}. If we call $B^t = (B^t_n)_{n = 0}^\infty$, $B^t$ satisfies trivially \ref{enu: Sigma0 0}--\ref{enu: Additivity closeness} for all $t \in \N \cup \set{0}$. To prove that $B^t$ satisfies \ref{enu: Sigmainfty density} for all $t \in \N \cup \set{0}$ too, we use the density of 
    \[
    \Span \set{\chi_E \st E \subset \R^d, \Abs{E} < \infty}
    \]
    in $L^p(\R^d)$ as it has been proved in Proposition 6.7 \cite{folland1999real} with arguments that also work for $p \in (0, 1)$. Let us prove this result.
    
    \underline{Step 1.1:} $\chi_R \in \overline{\bigcup\limits_{n = 0}^\infty B_n^t}$ when $R$ is a rectangle.
    
    Let $R \subset \R^d$ be a bounded rectangle, that is, there are $I_1, \dots, I_d$ bounded intervals such that $R = I_1 \times \dots \times I_d$. We call for every set $E \subset \R^d$
    \[
    \Lambda_k(E) \coloneqq \set{ j \in \Z^d \st 2^{-k}([0, 1]^d + j) \cup E \neq \emptyset }.
    \]
    It is then trivial, being $R$ a rectangle, that
    \[
    \Abs{\bigcup_{j \in \Lambda_k(R)} 2^{-k}([0, 1]^d + j)} \leq \prod_{k = 1}^d (\Abs{I_k} + 2 \Abs{2^{-k}[0, 1]^d}) = \prod_{k = 1}^d (\Abs{I_k} + 2^{-k + 1}).
    \]
    And so, using \eqref{eq: B splines support} and \eqref{eq: B spline partition of unity}, for every $\varepsilon > 0$ there is a sufficiently big $k \in \N$,
    \[
    \Norm{\chi_R - \sum_{j \in \Lambda_k(R)} \beta_{k, j}^{(r, d)}}_{L^p(\R^d)} \leq \left ( \prod_{k = 1}^d (\Abs{I_k} + 2^{-k + 1}) - \Abs{R} \right )^\frac{1}{p} < \varepsilon.
    \]

    \underline{Step 1.2:} $\chi_E \in \overline{\bigcup\limits_{n = 0}^\infty B_n^t}$ when $\Abs{E} < \infty$.
    
    Let $E \subset \R^d$ be a finite measure set and $\varepsilon > 0$. By definition, there are $R_1, \dots, R_n$ bounded rectangles such that
    \begin{align*}
        \Abs{E} \leq \sum_{i = 1}^n \Abs{R_k} + \frac{\varepsilon}{2C_p}, && E \subset \bigcup_{i = 1}^n R_k
    \end{align*}
    where $C_p = 2^\frac{1}{p}$ if $p \in (0, 1)$ and $C_p = 1$ if $p \geq 1$.
    We pick $k \in \N$ big enough so
    \[
    \Norm{\chi_{R_i} - \sum_{j \in \Lambda_k(R_i)} \beta_{k, j}^{(r, d)}}_{L^p(\R^d)} < \frac{\varepsilon}{2 n C_p^{n + 1}}
    \]
    for $i = 1, \dots, n$. Then,
    \[
    \begin{split}
        \Norm{\chi_E - \sum_{i = 1}^n \sum_{j \in \Lambda_k(R_i)} \beta^{(r, d)}_{k, j}}_{L^p(\R^d)} &\leq C_p \left ( \Norm{\chi_E - \sum_{i = 1}^n \chi_{R_i}}_{L^p(\R^d)} \right .\\ 
        &\left . + \Norm{\sum_{i = 1}^n \left ( \chi_{R_i} - \sum_{j \in \Lambda_k(R_i)} \beta_{k, j}^{(r, d)} \right )}_{L^p(\R^d)} \right ) \\
        &< \frac{\varepsilon}{2} + C_p^{n + 1} \sum_{i = 1}^n \Norm{\chi_{R_i} - \sum_{j \in \Lambda_k(R_i)} \beta_{k, j}^{(r, d)}}_{L^p(\R^d)} = \varepsilon.
    \end{split}
    \]
    Thus, $B^t$ satisfies \ref{enu: Sigmainfty density}.

    \underline{Step 2:} $\Normm_{M^\alpha_{p, q}(\Omega, \varrho_r)} \sleq \Normm_{A^\alpha_q(L^p(\Omega), B^t)}$.
    
    \underline{Step 2.1:} $\Normm_{M^\alpha_{p, q}(\Omega, \varrho_r)} \sleq \Normm_{A^\alpha_q(L^p(\Omega), B^0)}$.

    First, let us see that
    \[
    B_n^0 \subset \overline{\Mtt_{n c}(L^p(\Omega), \varrho_r)}.
    \]
    Let us consider
    \[
    \sum_{k = 1}^n \lambda_k \beta^{(0)}(a_k \; \cdot \; + \gamma_k) \in B^0_n.
    \]
    By Lemma \ref{lemma: characteristic function almost varrho NN}, we know that there is $c \in \N$ such that, for every $\varepsilon > 0$, it exists a $\varrho_r$-realization $\varphi_{\varepsilon} \in \Mtt_c(L^p(\Omega), \varrho_r)$ satisfying
    \[
    \Norm{\chi_{[0, 1]^d} - \varphi_{\varepsilon}}_{L^p(\Omega)} < \varepsilon.
    \]
    
    Thus, for this $\varrho_r$-realization $\varphi_\varepsilon$,
    \begin{equation} \label{eq: B0 element approximation by realizations}
    \Norm{\sum_{k = 1}^n \lambda_k \beta^{(0)}(a_k \; \cdot \; + \gamma_k) - \sum_{k = 1}^n \lambda_k \varphi_\varepsilon(a_k \; \cdot \; + \gamma_k) }_{L^p(\Omega)} < n C_p^n \varepsilon \Abs{\lambda}_1
    \end{equation}
    where $C_p = 2^\frac{1}{p}$ if $p < 1$ and $C_p = 1$ otherwise and $\lambda = (\lambda_1, \dots, \lambda_n) \in \R^n$. Suppose that $\phi = ((T_1, \alpha_1), \dots, (T_L, \alpha_L))$ is a $\varrho_r$-NN such that $\varphi_\varepsilon = R(\phi)$ and $M(\phi) \leq c$ with $T_\ell:\R^{n_{\ell - 1}} \longrightarrow \R^{n_\ell}$ for $\ell = 1, \dots, L$ and
    \[
    W(\phi) = ((A_1, b_1), \dots, (A_L, b_L)).
    \]
    Then, the $\varrho_r$-NN $\Phi_n = \left ( S_1, \beta_1, \dots, S_L, \Id \right )$ with
    \begin{align*}
        \func{S_1}{\R^d}{(\R^{n_\ell})^n}{x}{(T_1(a_1 x + \gamma_1), \dots, T_1(a_n x + \gamma_n))} && \\
        \func{\beta_\ell}{(\R^{n_\ell})^n}{(\R^{n_\ell})^n}{(x_1, \dots, x_n)}{(\alpha_\ell(x_1), \dots, \alpha_\ell(x_n))} && \txt{for } \ell = 1,\dots, L,\\
        \func{S_\ell}{(\R^{n_{\ell - 1}})^n}{(\R^{n_\ell})^n}{(x_1, \dots, x_n)}{(T_\ell(x_1), \dots, T_\ell(x_n))} && \txt{for } \ell = 2, \dots, L - 1,\\
        \func{S_L}{(\R^{n_{L - 1}})^n}{\R}{(x_1, \dots, x_n)}{\lambda_1 T_L(x_1) + \dots + \lambda_n T_L(x_n)} && 
    \end{align*}
    satisfies that
    \begin{multline*}
    W(\Phi_n) = \left ( \left ( \begin{bmatrix} a_1 A_1 \\ \vdots \\ a_n A_1 \end{bmatrix}, \begin{bmatrix} A_1 \gamma_1 + b_1 \\ \vdots \\ A_1 \gamma_n + b_1 \end{bmatrix} \right ), \left ( \begin{bmatrix} A_2 & & \\ & \ddots & \\ & & A_2 \end{bmatrix}, \begin{bmatrix} b_2 \\ \vdots \\ b_2 \end{bmatrix} \right ), \right . \\ \left . \dots, \left ( \begin{bmatrix} A_{L - 1} & & \\ & \ddots & \\ & & A_{L - 1} \end{bmatrix}, \begin{bmatrix} b_{L - 1} \\ \vdots \\ b_{L - 1} \end{bmatrix} \right ), \left ( \begin{bmatrix} \lambda_1 A_L & \dots & \lambda_n A_L \end{bmatrix}, b_L(\lambda_1 + \dots + \lambda_n) \right ) \right )
    \end{multline*}
    and $R(\Phi_n) = \sum_{k = 1}^n \lambda_k \varphi_\varepsilon(a_k \; \cdot \; + \gamma_k)$.
    Therefore, we get that
    \[
    M(\Phi_n) \leq n M(\phi) \leq n c,
    \]
    implying that $\Phi_n \in \Mtt_{n c}(L^p(\Omega), \varrho_r)$ and
    \[
    B_n^0 \subset \overline{\Mtt_{n c}(L^p(\Omega), \varrho_r)}.
    \]
    Consequently, for every $f \in L^p(\Omega)$ and calling $\Sigma_n = \Mtt_{n}(L^p(\Omega), \varrho_r)$ for every $n \in \N \cup \set{0}$, it follows that
    \begin{equation*}
    E(f, \Sigma_{c n})_p = \inf \set{ \Norm{f - g}_{L^p(\Omega)} \st g \in \Sigma_{c n}} \leq E(f, B_n^0)_p.
    \end{equation*}
    We can then conclude that
    \[
    \begin{split}
        \sum_{n = 1}^\infty (n^\alpha E(f, \Sigma_{n - 1})_p)^q \frac{1}{n} &= \sum_{n = 0}^\infty \sum_{k = c n + 1}^{c (n + 1)} (k^\alpha E(f, \Sigma_{k - 1})_p)^q \frac{1}{k} \\
        &\leq \sum_{n = 0}^\infty \sum_{k = c n + 1}^{c(n + 1)} (k^\alpha E(f, \Sigma_{cn})_p)^q \frac{1}{k} \\
        &\leq \sum_{n = 0}^\infty \sum_{k = c n + 1}^{c(n + 1)} ((c(n + 1))^\alpha E(f, \Sigma_{c n})_p)^q \frac{1}{n + 1} \\
        &\leq c^{\alpha q} \sum_{n = 0}^\infty ((n + 1)^\alpha E(f, \Sigma_{c n})_p)^q \frac{1}{n + 1} \sum_{k = c n + 1}^{c (n + 1)} 1 \\
        &\leq c^{\alpha q + 1} \sum_{n = 0}^\infty ((n + 1)^\alpha E(f, B^0_n)_p)^q \frac{1}{n + 1}.
    \end{split}
    \]

    \underline{Step 2.2:} $\Normm_{M^\alpha_{p, q}(\Omega, \varrho_r)} \sleq \Normm_{A^\alpha_q(L^p(\Omega), B^r)}$ if $d = 1$.
    
    As it is clear from Step 2.1 that we only need to see that there is a constant $c \in \N$ such that, for every $n \in \N$,
    \[
    B^r_n \subset \overline{\Mtt_{c n}(L^p(\Omega), \varrho_r)}.
    \]
    We introduce for every $a > 0$ and $b \in \R$ the strict $\varrho_r$-NN $ \lambda \phi_{a, b}$ with weights
    \[
    W(\phi) = \left ( \left ( \begin{bmatrix} a \\ a \\ \vdots \\ a \end{bmatrix}, \begin{bmatrix} b \\ b - 1 \\ \vdots \\ b - (r + 1) \end{bmatrix} \right ), \left ( \frac{\lambda}{r!} \left [\binom{r + 1}{0}, \binom{r + 1}{1}, \dots, \binom{r + 1}{r + 1} \right ], 0 \right ) \right )
    \]
    which satisfies $\lambda \beta^{(r)}(a \; \cdot \; + b) = R(\lambda \phi_{a, b})$ by Definition \ref{eq: def B spline} and $M(\lambda \phi_{a, b}) \leq 3(r + 2)$. And so, repeating the same idea as in the case $t = 0$,
    \[
    B_n^r \subset \Mtt_{3(r + 2) n}(L^p(\Omega), \varrho_r).
    \]

    \underline{Step 2.3:} $\Normm_{M^\alpha_{p, q}(\Omega, \varrho_r)} \sleq \Normm_{A^\alpha_q(L^p(\Omega), B^r)}$ if $r, d \geq 2$.
    
    Finally, this case $r, d \geq 2$ can be summed up by the fact that the exact product is a $\varrho_r$-realization. Indeed, all polynomials of degree equal or less than $r$ are $\varrho_r$-realization. For all $a \in \R$, we can see that
    \[
    (x - a)^r = \varrho_r(x - a) + (-1)^r \varrho_r(a - x)
    \]
    is a $\varrho_r$-realization and $\Span \set{\R \ni x \mapsto (x - a)^r \st a \in \R}$ contains every polynomial of degree $r$ or less. Repeating the same ideas as in Corollary \ref{scalar mult NN def and bound}, using that
    \[
    xy = \frac{1}{4} \left ( (x + y)^2 - (x - y)^2 \right ),
    \]
    we conclude that there is a $\varrho_r$-NN $\phi = ((T_1, \alpha_1), \dots, (T_L, \alpha_L))$ such that $R(\phi)(x, y) = xy$ for all $x, y \in \R$. Recovering notion of equation \eqref{eq: tensor product}, we introduce for $k \in \N$ the $\varrho_r$-NN
    \[
    \Phi_k = ((T_1 \otimes \Id_k, \alpha_1 \otimes \Id_k), \dots, (T_L \otimes \Id_k, \alpha_L \otimes \Id_k))
    \]
    and using Definition \ref{concatenation def}
    \[
    \Phi = \phi \bullet \Phi_{1} \bullet \Phi_{2} \bullet \dots \bullet \Phi_{d - 1}.
    \]
    we get a $\varrho_r$-NN such that $R(\Phi)(x) = x_1 \cdots x_d$ for all $x = (x_1, \dots, x_d) \in \R^d$. In conclusion, $\beta^{(t)}_d$ is a $\varrho_r$-NN. Consequently,
    \[
    B_n^r \subset \Mtt_{c n}(L^p(\Omega), \varrho_r)
    \]
    for some $c \in \N$ and all $n \in \N$, and the proof concludes as in Step 2.1.
\end{proof}

\begin{deff}
    A set $\Omega \subset \R^d$ is a bounded Lipschitz domain if $\Omega$ bounded and for all $x \in \partial \Omega$ there is an open neighborhood $x \in U_x \subset \R^d$, $\varphi: \R^{d - 1} \longrightarrow \R$ a Lipschitz function, an open $V \subset \R^d$ and $T: \R^d \longrightarrow \R^d$ an affine transformation such that
    \[
    T(\Omega \cap U_x) = \set{ (x, x') \in \R^{d - 1} \times \R \st x' < \varphi(x) } \cap V.
    \]
\end{deff}

We recall that a function $\varphi: \R^d \longrightarrow \R$ is Lipschitz if there is a constant $M > 0$ such that for all $x, y \in \R^d$,
\[
\Abs{\varphi(x) - \varphi(y)}_2 \leq M \Abs{x - y}_2.
\]

\begin{theorem} \label{th: Bsov embedded in NN aproximation space}
        Let $\Omega \subset \R^d$ be a bounded Lipschitz domain, $r \in \N$, $p \in (0, \infty)$, $q \in (0, \infty]$,
        \begin{align*}
            r_0 = \begin{cases} 0 & \txt{if } r = 1 \txt{ and } d > 1 \\ r & \txt{otherwise} \end{cases} && \txt{ and } && \lambda = \frac{r_0 + \min \set{1, \frac{1}{p}}}{d}.
        \end{align*}
        Then, for all $\alpha \in (0, \lambda)$,
        \[
        B^{\alpha d}_{p, q}(\Omega) \hookrightarrow M^\alpha_{p, q}(\Omega, \varrho_r).
        \]
\end{theorem}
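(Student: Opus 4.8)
The plan is to route the embedding through the spline approximation spaces of Lemma~\ref{lemma: intermediate approximation space}. Put
\[
t \coloneqq \begin{cases} 0 & \txt{if } r = 1 \txt{ and } d > 1,\\ r & \txt{otherwise,}\end{cases}
\]
so $t = r_0$, and note that in every case either $t = 0$ or $t = r \ge \min\{d,2\}$, whence Lemma~\ref{lemma: intermediate approximation space} gives $A^\alpha_q(L^p(\Omega'),(B_n^t)_{n=0}^\infty) \hookrightarrow M^\alpha_{p,q}(\Omega',\varrho_r)$ for every bounded $\Omega' \subset \R^d$. Moreover the hypothesis $\alpha < \lambda = (t + \min\{1,\tfrac1p\})/d$ reads exactly $\alpha d < \min\{(t+1) + \tfrac1p - 1,\ t+1\}$, which is the admissibility condition of Proposition~\ref{prop: Besov norm equivalence on the unit cube} for splines of order $t+1$. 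So it suffices to prove $B^{\alpha d}_{p,q}(\Omega) \hookrightarrow A^\alpha_q(L^p(\Omega),(B_n^t)_{n=0}^\infty)$ and then compose with the Lemma; since $M^\alpha_{p,q}(\Omega,\varrho_r)$ is quasi-Banach ($\varrho_r$ is continuous, locally bounded and not a polynomial, so Proposition~\ref{prop: NN satisfies P1 to P4} and Corollary~\ref{cor: NN satisfies P5} apply), this is a legitimate composition of embeddings.

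\textbf{Reduction to the unit cube.} As $\Omega$ is bounded, a homothety $T(x) = \rho x + c$ carries $\overline\Omega$ into $Q \coloneqq (0,1)^d$, and precomposition with $T$ is a quasi-norm isomorphism of $B^{\alpha d}_{p,q}$, of $A^\alpha_q(L^p(\;\cdot\;),B^t)$ and of $M^\alpha_{p,q}(\;\cdot\;,\varrho_r)$ (it dilates the Besov seminorm, and turns a $\varrho_r$-NN into a $\varrho_r$-NN whose number of weights changes by at most a fixed factor, after absorbing $T$ into the first affine map). So we may assume $\overline\Omega \subset Q$. Because $\Omega$ is a bounded Lipschitz domain, there is a bounded linear extension operator $\mathfrak E \colon B^{\alpha d}_{p,q}(\Omega) \to B^{\alpha d}_{p,q}(Q)$ valid for all $p,q\in(0,\infty]$; this is the one ingredient not established in the text. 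Given $f \in B^{\alpha d}_{p,q}(\Omega)$, a $\varrho_r$-NN realization $g$ with $M \le n$ viewed on $Q$ restricts to such a realization on $\Omega$ with $\Norm{f - g}_{L^p(\Omega)} \le \Norm{\mathfrak E f - g}_{L^p(Q)}$ (using $\mathfrak E f|_\Omega = f$ and $\Omega \subset Q$), hence $E(f,\Mtt_n(L^p(\Omega),\varrho_r))_p \le E(\mathfrak E f,\Mtt_n(L^p(Q),\varrho_r))_p$ for all $n$, and so
\[
\Norm{f}_{M^\alpha_{p,q}(\Omega,\varrho_r)} \le \Norm{\mathfrak E f}_{M^\alpha_{p,q}(Q,\varrho_r)} \sleq \Norm{\mathfrak E f}_{B^{\alpha d}_{p,q}(Q)} \sleq \Norm{f}_{B^{\alpha d}_{p,q}(\Omega)},
\]
once the embedding on $Q$ is known. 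As the inclusion $B^{\alpha d}_{p,q}(\Omega) \subset M^\alpha_{p,q}(\Omega,\varrho_r)$ inside $L^p(\Omega)$ is linear and injective, this estimate is precisely the asserted embedding.

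\textbf{Core estimate on $Q$.} Fix $f \in B^{\alpha d}_{p,q}(Q)$ and set $s_k(f) \coloneqq \inf_{P \in \Sigma^{(t+1,d)}_k} \Norm{f - P}_{L^p(Q)}$. Proposition~\ref{prop: Besov norm equivalence on the unit cube} (applicable by the index check above) gives
\[
\Norm{f}_{L^p(Q)} + \Big( \sum_{k=0}^\infty \big[ 2^{\alpha d k} s_k(f) \big]^q \Big)^{1/q} \sleq \Norm{f}_{B^{\alpha d}_{p,q}(Q)} < \infty
\]
(with the usual $\sup$ replacement for $q=\infty$). By Definition~\ref{eq: def B spline}, $\beta^{(t+1,d)}_{k,j} = \beta^{(t)}_d(2^k\;\cdot\; - j) \in B_1^t$, and by~\eqref{eq: B splines support} only $O(2^{dk})$ of these meet $Q$, so $\Sigma^{(t+1,d)}_k|_Q \subset B_{C 2^{dk}}^t$ for a dimensional constant $C$, whence $E(f, B_{C2^{dk}}^t)_p \le s_k(f)$. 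Since $n \mapsto E(f, B_{n-1}^t)_p$ is non-increasing and $\sum_{n = C2^{dk}+1}^{C2^{d(k+1)}} n^{-1} \le d\log 2$, summing over the dyadic blocks $(C2^{dk}, C2^{d(k+1)}]$ and bounding the leading $O(1)$ terms by $E(f, B_0^t)_p = \Norm{f}_{L^p(Q)}$ yields
\[
\sum_{n=1}^\infty \big( n^\alpha E(f, B_{n-1}^t)_p \big)^q \tfrac1n \sleq \Norm{f}_{L^p(Q)}^q + \sum_{k=0}^\infty \big[ 2^{\alpha d k} E(f, B_{C2^{dk}}^t)_p \big]^q \sleq \Norm{f}_{B^{\alpha d}_{p,q}(Q)}^q,
\]
i.e. $f \in A^\alpha_q(L^p(Q),(B_n^t)_{n=0}^\infty)$ with controlled quasi-norm. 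Composed with Lemma~\ref{lemma: intermediate approximation space} and the reduction, this proves the theorem.

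\textbf{Main obstacle.} The substantive external input is the bounded Besov extension operator on bounded Lipschitz domains in the full quasi-Banach range $p,q<1$ (a Rychkov-type universal extension); this is where the Lipschitz hypothesis is genuinely used. The remaining steps are routine, but two points need care: matching the \emph{order} $t+1$ of the dyadic splines appearing in Proposition~\ref{prop: Besov norm equivalence on the unit cube} with the \emph{degree} $t$ of the B-splines generating $B_n^t$, and checking the case split ($t=0$ versus $t=r$) against the hypothesis $t=r\ge\min\{d,2\}$ of Lemma~\ref{lemma: intermediate approximation space}.
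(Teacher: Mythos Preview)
Your proposal is correct and follows essentially the same route as the paper: reduce to the unit cube via an affine change and a Besov extension operator on Lipschitz domains, use Proposition~\ref{prop: Besov norm equivalence on the unit cube} with splines of order $t+1$ to control $E(f,\Sigma^{(t+1,d)}_k)_p$, pass to $B^t_n$ by counting the $O(2^{dk})$ splines meeting $Q$ and a dyadic block summation, and finish with Lemma~\ref{lemma: intermediate approximation space}. The only organizational difference is that you unify the case split by introducing $t=r_0$ at the outset, whereas the paper runs Steps~3 and~4 separately for $(r\neq 1 \text{ or } d=1)$ and $(r=1,\ d>1)$; your index check $\alpha d < t + \min\{1,1/p\}$ against Proposition~\ref{prop: Besov norm equivalence on the unit cube} and the hypothesis $t=r\ge\min\{d,2\}$ of Lemma~\ref{lemma: intermediate approximation space} is exactly what the paper verifies in those two steps.
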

The proof that follows takes the ideas of Theorem 5.5 in \cite{gribonval2022approximation} but with some modifications for Step 2.
\begin{proof}
    We recall notation from Proposition \ref{prop: Besov norm equivalence on the unit cube} (where in this case $\Omega$ is an arbitrary bounded Lipschitz domain) and Definition \ref{eq: def B spline}
    \[
    \Sigma^{(r, d)}_k(\Omega) = \Span \set{ \beta_{k, j}^{(r, d)}|_\Omega}
    \]
    and from Lemma \ref{lemma: intermediate approximation space},
    \[
    B_n^t \coloneqq \set{ \sum_{k = 1}^n \lambda_k \beta^{(t)}_d(a_k \; \cdot \; + b_k) \st \lambda_k \in \R, a_k > 0, b_k \in \R^d \txt{ for } k = 1, \dots, n }
    \]
    where $\beta^{(t)}_d(x) = \beta^{(t)}(x_1) \cdots \beta^{(t)}(x_d)$ being $\beta^{(t)}$ the B-spline introduce in Definition \ref{eq: def B spline}. We introduce too $B^t = (B_n^t)_{n = 0}^\infty$.
    Then, by the same Proposition \ref{prop: Besov norm equivalence on the unit cube}, we know
    \[
    \Normm_{B^\alpha_{p, q}((0, 1)^d)} \approx \Normm_{L^p((0, 1)^d)} + \Norm{(2^{\alpha k} E(\; \cdot \;, \Sigma_k^{(r, d)})_p)_{k = 0}^\infty}_{\ell^q}
    \]
    for every $0 < \alpha < \min \set{r - 1 + \frac{1}{p}, r} = r - 1 + \min \set{\frac{1}{p}, 1} \eqqcolon \lambda(r - 1, p)$.

    \underline{Step 1:} $B_{p, q}^{\alpha d}((0, 1)^d) \hookrightarrow A^\alpha_q(L^p((0, 1)^d), B^{r})$ for every $\alpha \in (0, \lambda(r, p) / d)$.

    If $j \in \Z^d$ and $k \in \N$ such that $\beta^{(r + 1, d)}_{k, j}|_{(0, 1)^d} \neq 0$ (see Definition \ref{eq: def B spline} for notation) then $j \in [-r, 2^k - 1]^d$. Consequently
    \[
    \# \set{j \in \Z^d \st \beta_{k, j}^{(r, d)} \neq 0} \leq (2^k + r)^d.
    \]
    This implies that
    \[
    \Sigma_k^{(r + 1, d)} \subset B_{(2^k + r)^d}^{r},
    \]
    which suggests the notation $z_k = (2^k + r)^d$. Note that $z_k$ satisfies
    \begin{equation} \label{eq: bounds for zk}
        2^{k d} \leq z_k = 2^{(k - 1) d} \left (2 + \frac{r}{2^{(k - 1)}} \right )^d \leq 2^{(k - 1) d}(r + 2)^d.
    \end{equation}
    We get then, for every $f \in B^{\alpha d}_{p, q}((0, 1)^d)$ with $0 < \alpha d < \lambda(r, p)$, supposing $q < \infty$,
    \begin{align*}
        \Norm{f}_{A^\alpha_q(L^p((0, 1)^d), B^{r})} &= \sum_{n = 1}^\infty \frac{1}{n} \left ( n^\alpha E(f, B^{r}_{n - 1})_p \right )^q \\
        &= \sum_{n = 1}^{z_0} \frac{1}{n} \left ( n^\alpha E(f, B^{r}_{n - 1})_p \right )^q + \sum_{k = 0}^\infty \sum_{n = z_k + 1}^{z_{k + 1}} \frac{1}{n} \left ( n^\alpha E(f, B^{r}_{n - 1})_p \right )^q \\
        &\leq \Norm{f}_{L^p((0, 1)^d)}^q \sum_{n = 1}^{z_0} n^{\alpha q - 1} + \sum_{k = 0}^\infty \sum_{n = z_k + 1}^{z_{k + 1}} 2^{-k d} \left ( \left ( 2^{k d}(r + 2)^d \right )^\alpha E(f, B_{z_k}^{r})_p \right )^q \\
        &\leq C_1 \Norm{f}_{L^p((0, 1)^d)}^q + C_2 \sum_{k = 0}^\infty 2^{k d (\alpha q - 1)} E(f, \Sigma_k^{(r + 1, d)})_p^q (z_{k + 1} - z_k) \\
        &\leq C_1 \Norm{f}_{L^p((0, 1)^d)}^q + C_3 \sum_{k = 0}^\infty (2^{k (\alpha d)} E(f, \Sigma_k^{(r + 1, d)})_p)^q \\
        &\leq 2 \max \set{C_1, C_3} \left ( \Norm{f}_{L^p((0, 1)^d)} + \Norm{(2^{k (\alpha d)} E(f, \Sigma_k^{(r + 1, d)})_p)_{k = 0}^\infty}_{\ell^q} \right )^q \\
        &\leq 2 \max \set{C_1, C_3} C_4^q \Norm{f}_{B^{\alpha d}_{p, q}((0, 1)^d)}^q
    \end{align*}
    where
    \begin{align*}
        C_1 = \sum_{n = 1}^{z_0} n^{\alpha q - 1}, && C_2 = (r + 2)^{d \alpha q}, && C_3 = ((r + 2)^d - 1)C_2
    \end{align*}
    and $\Normm_{L^p((0, 1)^d)} +  \Norm{(2^{\alpha d k} E(\; \cdot \;, \Sigma_k^{(r, d)})_p)_{k = 0}^\infty}_{\ell^q} \leq C_4 \Normm_{B^{\alpha d}_{p, q}((0, 1)^d)}$. We also used \eqref{eq: bounds for zk} in the first inequality, 
    \begin{align*}
        E(f, B^{r}_{n - 1})_p \leq E(f, \Sigma_k^{(r + 1, d)})_p && \txt{ and } && z_{k + 1} - z_k \leq 2^{k d}((r + 2)^d - 1)
    \end{align*}
    in the second and third inequalities respectively. The case $q = \infty$ uses the same bounds:
    \begin{align*}
        \Norm{f}_{A^\alpha_\infty(L^p((0, 1)^d)), B^{r})} &= \sup_{n \in \N} n^\alpha E(f, B^{r}_{n - 1})_p \\
        &\leq \max_{n = 1, \dots, z_0} n^\alpha E(f, B^{r}_{n - 1})_p + \sup_{k \in \N \cup \set{0}} \max_{n = z_k + 1, \dots, z_{k + 1}} n^\alpha E(f, B^{r}_{n - 1})_p \\
        &\leq \Norm{f}_{L^p((0, 1)^d)} z_0^\alpha + \sup_{k \in \N \cup \set{0}} \max_{n = z_k + 1, \dots, z_{k + 1}} n^\alpha E(f, B^{r}_{z_k})_p \\
        &\leq \Norm{f}_{L^p((0, 1)^d)} z_0^\alpha + \sup_{k \in \N \cup \set{0}} z_{k + 1}^\alpha E(f, \Sigma_k^{(r + 1, d)})_p \\
        &\leq \Norm{f}_{L^p((0, 1)^d)} z_0^\alpha + (r + 2)^{d \alpha} \sup_{k \in \N \cup \set{0}} 2^{k (\alpha d)} E(f, \Sigma_k^{(r + 1, d)})_p \\
        &\leq \max \set{z_0, (r + 2)^d}^\alpha \left ( \Norm{f}_{L^p((0, 1)^d)} + \Norm{(2^{k (\alpha d)} E(f, \Sigma_k^{(r + 1, d)})_p}_{\ell^\infty} \right ) \\
        &\leq C_4 \max \set{z_0, (r + 2)^d}^\alpha \Norm{f}_{B^{\alpha d}_{p, \infty}((0, 1)^d)}.
    \end{align*}

    \underline{Step 2:} $B^{\alpha d}_{p, q}(\Omega) \hookrightarrow A^\alpha_q(L^p(\Omega), B^{r})$ for every $\alpha \in (0, \lambda(r, p) / d)$.

    Set $0 < \alpha d < \lambda(r, p)$ and $f \in B^{\alpha d}_{p, q}(\Omega)$. We use the fact that, for every $p \in (0, \infty)$, $q \in (0, \infty]$ and $s > 0$, between two Lipschitz domains $G \subset H$ there exists an extension operator
    \[
    \mathcal{E}: B_{p, q}^s(G) \longrightarrow B_{p, q}^s(H)
    \]
    such that for every $f \in B^s_{p, q}(G)$ $\mathcal{E} f|_G = f$ and there exists $K > 0$ satisfying $\Norm{\mathcal{E} f}_{B^s_{p, q}(H)} \leq K \Norm{f}_{B^s_{p, q}(G)}$. Indeed, its existence for $p \in (0, 1)$ and $p \in [1, \infty)$ is proved in Theorem 6.1 from \cite{devore1988interpolation} and Corollary 1 in Section 4 from \cite{johnen2006equivalence} respectively. Being $\Omega$ bounded, we find $a > 0$ and $b \in \R^d$ such that $\Omega \subset aQ + b$. We define $Tx = a x + b$ for every $x \in \R^d$ and we suppose for a later argument that $a > 1$. We consider then $\mathcal{E}: B^{\alpha d}_{p, q}(\Omega) \longrightarrow B^{\alpha d}_{p, q}(T (0, 1)^d)$ an extension operator. We show that there are constants $C_1, \dots, C_5 > 0$ such that
    \begin{align}
        \Norm{f}_{A^\alpha_q(L^p(\Omega), B^{r})} &\leq C_1 \Norm{\mathcal{E} f}_{A^\alpha_q(L^p(T(0, 1)^d), B^{r})} \label{align: inequality 1} \tag{I.1} \\
        &\leq C_2 \Norm{\mathcal{E} f \circ T}_{A^\alpha_q(L^p((0, 1)^d), B^r)} \label{align: inequality 2} \tag{I.2} \\
        &\leq C_3 \Norm{\mathcal{E} f \circ T}_{B^{\alpha d}_{p, q}((0, 1)^d)} \label{align: inequality 3} \tag{I.3} \\
        &\leq C_4 \Norm{\mathcal{E} f}_{B^{\alpha d}_{p, q}(T(0, 1)^d)} \label{align: inequality 4} \tag{I.4} \\
        &\leq C_5 \Norm{f}_{B^{\alpha d}_{p, q}(\Omega)} \label{align: inequality 5} \tag{I.5}.
    \end{align}
    Note that inequality \eqref{align: inequality 5} is a property of $\mathcal{E}$ and inequality \eqref{align: inequality 3} was proved at Step 1 as long as $Ef \circ T \in B^{\alpha d}_{p, q}((0, 1)^d)$ which is true if inequality \eqref{align: inequality 4} holds.

    \begin{enumerate}
        \item[\eqref{align: inequality 1}] This inequality follows from
        \begin{multline*}
            E(f, B^r_n)_{L^p(\Omega)} = \inf_{g \in B^r_n} \Norm{f - g}_{L^p(\Omega)} = \inf_{g \in B^r_n} \Norm{\mathcal{E} f - g}_{L^p(\Omega)} \\ \leq \inf_{g \in B^r_n} \Norm{\mathcal{E} f - g}_{L^p(T(0, 1)^d)} = E(\mathcal{E} f, B^r_n)_{L^p(T(0, 1)^d)}
        \end{multline*}
        and so $C_1 = 1$.

        \item[\eqref{align: inequality 2}] As the previous inequality, we give estimates of the error:
        \begin{align*}
            E(\mathcal{E} f, B^r_n)_{L^p(T(0, 1)^d)} &= \inf_{g \in B^r_n} \Norm{\mathcal{E} f - g}_{L^p(T(0, 1)^d)} \\
            &= a^{d / p} \inf_{g \in B^r_n} \Norm{\mathcal{E} f \circ T - g \circ T}_{L^p((0, 1)^d)} \\
            &= a^{d / p} \inf_{g \in B^r_n} \Norm{\mathcal{E} f \circ T - g}_{L^p((0, 1)^d)} = a^{d / p} E(\mathcal{E} f \circ T, B^r_n)_{L^p((0, 1)^d)}
        \end{align*}
        where we used that 
        \[
        \set{g \circ T \st g \in B^r_n} = B^r_n
        \]
        which follows from the definition of $B^r_n$ in Lemma \ref{lemma: intermediate approximation space}. We get then \eqref{align: inequality 2} is actually an equality with $C_2 = a^{d / p}$.

        \item[\eqref{align: inequality 4}] To prove this inequality, we use the modulus of smoothness \ref{def: modulus of smoothness}. Let $\tau = \Ceil{\alpha d}$ and $x \in (0, 1)^d$ such that $x, x + h, \dots, x + \tau h \in (0, 1)^d$. By the binomial formula, we get for a fixed $h \in \R^d$
        \begin{align*}
            \Delta_h^\tau(\mathcal{E} f \circ T, (0, 1)^d)(x) &= \sum_{k = 0}^\tau \binom{\tau}{k} (-1)^{\tau - k} \mathcal{E} f \circ T(x + k h) \\
            &= \sum_{k = 0}^\tau \binom{\tau}{k} (-1)^{\tau - k} \mathcal{E} f(Tx + k a h) = \Delta^\tau_{a h}(\mathcal{E} f, T(0, 1)^d)(Tx)
        \end{align*}
        because $Tx, Tx + a h, \dots, Tx + \tau a h \in T(0, 1)^d$. It follows that for any $t > 0$,
        \begin{align*}
        \omega_\tau(\mathcal{E} f \circ T, (0, 1)^d)_p (t) &= \sup_{\Abs{h}_2 < t} \Norm{\Delta_h^\tau(\mathcal{E} f \circ T, (0, 1)^d)}_{L^p((0, 1)^d)}\\
        &= \sup_{a \Abs{h}_2 < a t} \Norm{\Delta_{a h}^\tau(\mathcal{E} f, T (0, 1)^d)  \circ T}_{L^p((0, 1)^d)} \\
        &= a^{- d / p} \sup_{a \Abs{h}_2 < a t} \Norm{\Delta_{a h}^\tau(\mathcal{E} f, T (0, 1)^d)}_{L^p(T (0, 1)^d)} \\
        &= a^{-d/p} \omega_\tau(\mathcal{E} f, T(0, 1)^d)_p(a t).
        \end{align*}
        We can conclude when $q < \infty$:
        \begin{align*}
            \Abs{\mathcal{E} f \circ T}_{B^{\alpha d}_{p, q}((0, 1)^d)}^q &= \int_0^1 \left ( \frac{\omega_\tau(\mathcal{E} f \circ T, (0, 1)^d)_p(t)}{t^{\alpha d}} \right )^q \frac{dt}{t} \\
            &= a^{-d q / p} \int_0^1 \left ( \frac{\omega_\tau(\mathcal{E} f, T (0, 1)^d)_p(at)}{t^{\alpha d}} \right )^q \frac{dt}{t} \\
            &= a^{d q \left ( \alpha  - \frac{1}{p} \right )} \int_0^a \left ( \frac{\omega_\tau(\mathcal{E} f, T (0, 1)^d)_p(s)}{s^{\alpha d}} \right )^q \frac{ds}{s} \\
            &= a^{dq \left ( \alpha - \frac{1}{p} \right )} \left ( \Abs{\mathcal{E} f}_{B^{\alpha d}_{p, q}(T (0, 1)^d)}^q + \int_1^a \left ( \frac{\omega_\tau(\mathcal{E} f, T (0, 1)^d)_p(s)}{s^{\alpha d}} \right )^q \frac{ds}{s} \right ) \\
            &\leq a^{d q \left ( \alpha - \frac{1}{p} \right )} \left ( \Abs{\mathcal{E} f}_{B^{\alpha d}_{p, q}(T (0, 1)^d)}^q + 2^{\tau q} \frac{\Norm{\mathcal{E} f}_{L^p(T(0, 1)^d)}^q}{\alpha d q}\left ( 1 - a^{-\alpha d q} \right ) \right ) \\
            &\leq 2 a^{d q \left ( \alpha - \frac{1}{p} \right )} \max \set{1, \frac{2^{\tau q}}{\alpha d q} \left ( 1 - a^{-\alpha d q} \right )} \Norm{\mathcal{E} f}_{B^{\alpha	d}_{p, q}(T (0, 1)^d)}^q.
        \end{align*} 
        Calling $C = 2 a^{d q (\alpha - 1 / p)} \max \set{1, \frac{2^{\tau q}}{\alpha d q} \left ( 1 - a^{-\alpha d q} \right )}$, we get
        \begin{multline*}
        \Norm{\mathcal{E} f \circ T}_{B^{\alpha	d}_{p, q}(T (0, 1)^d)} = \Norm{\mathcal{E} f \circ T}_{B^{\alpha	d}_{p, q}(T (0, 1)^d)} + \Abs{\mathcal{E} f \circ T}_{B^{\alpha	d}_{p, q}(T (0, 1)^d)} \\ \leq a^{-d / p} \Norm{\mathcal{E} f}_{L^p(T(0, 1)^d)} + C \Abs{\mathcal{E} f}_{B^{\alpha d}_{p, q}(T(0, 1)^d)} \leq \max \set{a^{-d / p}, C} \Norm{\mathcal{E} f}_{B^{\alpha	d}_{p, q}(T (0, 1)^d)}
        \end{multline*}
    \end{enumerate}

    With this chain of inequalities proved, we have shown that $B^{\alpha d}_{p, q}(\Omega) \hookrightarrow A^\alpha_q(L^p(\Omega), B^r)$.

    \underline{Step 3:} $B_{p, q}^{\alpha d}(\Omega) \hookrightarrow M^\alpha_{p, q}(\Omega, \varrho_r)$ for every $\alpha \in (0, \lambda(r, p) / d)$ if $r \neq 1$ or $d = 1$.

    For any $0 < \alpha d < r + \min \set{1, 1 / p}$, we are under the hypothesis of Lemma \ref{lemma: intermediate approximation space} and so, by Step 2,
    \[
    B_{p, q}^{\alpha d}(\Omega) \hookrightarrow A^\alpha_q(L^p(\Omega), B^r) \hookrightarrow M^{\alpha}_{p, q}(\Omega, \varrho_r).
    \]

    \underline{Step 4:} $B^{\alpha d}_{p, q}(\Omega) \hookrightarrow M^\alpha_{p, q}(\Omega, \varrho_1)$ for every $\alpha \in (0, \min \set{1, 1/p} / d)$ if $r = 1$ and $d > 1$.

    By Lemma \ref{lemma: intermediate approximation space}, we know
    \[
    A^\alpha_q(L^p(\Omega), B^0) \hookrightarrow M^\alpha_{p, q}(\Omega, \varrho_1)
    \]
    for every $\alpha > 0$. By Step 2, we know
    \[
    B^{\alpha d}_{p, q}(\Omega) \hookrightarrow A^\alpha(L^p(\Omega), B^0)
    \]
    for every $\alpha \in (0, \min \set{1, 1/p} / d)$. Then, we conclude that
    \[
    B_{p, q}^{\alpha d}(\Omega) \hookrightarrow M^\alpha_{p, q}(\Omega, \varrho_1)
    \]
    for every $\alpha \in (0, \min \set{1, 1 / p} / d)$.
\end{proof}

Theorem \ref{th: Bsov embedded in NN aproximation space} tells us that the approximation pace to functions in the Besov spaces by $\varrho_r$-NNs grows with $r$.

\section{Additional Approximation Properties of Neural Networks.}

The paper \cite{gribonval2022approximation} also proves, in a sense, the converse result of Theorem \ref{th: Bsov embedded in NN aproximation space}:

\begin{theorem}
    Let $p \in (0, \infty)$, $r \in \N$, $\Omega = (0, 1)$ and $L \in \N$. Then, there is $\nu = \nu(L)$ such that, for all $\alpha > 0$,
    \[
    A^\alpha_q(L^p(\Omega, (\Sigma_n)_{n = 0}^\infty) \hookrightarrow B^{\alpha / \nu}_{q, q}(\Omega)
    \]
    where $q = \left ( \frac{\alpha}{\nu} + \frac{1}{p} \right )^{-1}$,
    \[
    \Sigma_n = \set{R(\phi) \st \phi \txt{ is a } \varrho_r, M(\phi) \leq n \txt{ and } L(\phi) \leq L}
    \]
    for $n \in \N$ and $\Sigma_0 = \set{0}$.
\end{theorem}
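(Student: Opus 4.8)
The plan is to reduce the statement to the classical inverse (Bernstein--Jackson) theory of nonlinear approximation by free-knot piecewise polynomials. The bridge is a structural fact about bounded-depth $\varrho_r$-networks: if $\phi$ is a $\varrho_r$-NN with input dimension $1$, $L(\phi)\le L$ and $M(\phi)\le n$, then $R(\phi)|_{(0,1)}$ is a continuous piecewise polynomial of degree at most $r^{L-1}$ with at most $c(r,L)\,n^{\nu(L)}$ breakpoints, where $\nu=\nu(L)$ depends only on $L$ (one expects $\nu(L)=L-1$ for $L\ge 2$). This lemma is exactly where the exponent $\nu$ in the statement originates, and everything else is essentially a bookkeeping reindexing together with a citation.

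I would prove the structural lemma by induction on the number of layers. Since $\alpha_L=\Id$, the realization $R(\phi)=T_L\circ\alpha_{L-1}\circ T_{L-1}\circ\cdots\circ\alpha_1\circ T_1$ is a composition of affine maps with $L-1$ componentwise activations, each coordinate of an activation being $\varrho_r$ or $\Id$, hence a univariate piecewise polynomial of degree $\le r$ with at most one breakpoint. The induction step uses the elementary fact that if $f$ is piecewise polynomial of degree $d$ with $N$ pieces then applying $\varrho_r$ coordinate-wise to an affine combination of such functions produces, on each piece, at most $r^{L-1}$ new breakpoints (the zeros of a degree-$\le r^{L-1}$ polynomial), so the number of pieces is multiplied by at most a fixed constant $C(r,L)$ per layer, while an affine layer merely unions the breakpoint sets of its $n_\ell$ inputs and multiplies the count by at most $n_\ell$; the degree is multiplied by at most $r$ at each nonlinear layer, giving the bound $r^{L-1}$. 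Iterating across the $L-1$ nonlinear layers and using $\sum_\ell n_\ell\le C\,M(\phi)\le Cn$ (after discarding neurons with no nonzero incoming or outgoing weight) together with the AM--GM inequality $\prod_\ell n_\ell \le (Cn/(L-1))^{L-1}$ yields at most $c(r,L)\,n^{\nu(L)}$ breakpoints. A subtlety to handle carefully is the identity-routing inside $\varrho_r$-networks and the precise constants, but for the statement only the power $n^{\nu(L)}$ and the boundedness of the degree matter.

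With the lemma in hand, set $k:=r^{L-1}+1$ and let $\mathcal{S}_m$ denote the continuous piecewise polynomials on $(0,1)$ of degree $<k$ with at most $m$ pieces. The lemma gives $\Sigma_n\subset\mathcal{S}_{c n^{\nu}}$, hence $E(f,\mathcal{S}_{m})_p\le E(f,\Sigma_{j})_p$ whenever $c j^{\nu}\le m$; feeding this into the definition of the approximation quasi-norms and performing the same reindexing computation as in Step~2.1 of Lemma~\ref{lemma: intermediate approximation space} (the number of integers $m$ with $\lfloor(m/c)^{1/\nu}\rfloor=j$ is of order $j^{\nu-1}$, matched by the weights $1/m\sim j^{-\nu}$, which is precisely what turns a rate $\alpha$ into a rate $\alpha/\nu$) gives the continuous embedding
\[
A^\alpha_q(L^p(\Omega),(\Sigma_n)_{n=0}^\infty)\ \hookrightarrow\ A^{\alpha/\nu}_q(L^p(\Omega),(\mathcal{S}_m)_{m=0}^\infty).
\]
Then I would invoke the classical characterization of the approximation spaces of free-knot splines of a fixed order (DeVore--Popov; see Chapter~12 of \cite{devore1993constructive}): for $0<s<k$ and $1/\tau=s+1/p$ one has $A^s_\tau(L^p(\Omega),(\mathcal{S}_m))\hookrightarrow B^s_{\tau,\tau}(\Omega)$. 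Applying this with $s=\alpha/\nu$ and $\tau=q=(\alpha/\nu+1/p)^{-1}$ yields exactly $A^\alpha_q(L^p(\Omega),(\Sigma_n))\hookrightarrow B^{\alpha/\nu}_{q,q}(\Omega)$, the composition of embeddings (all realised by the identity inclusion into $L^p(\Omega)$) being again an embedding.

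The main obstacle is the structural lemma: carrying out the piece-count for deep compositions cleanly and extracting the correct exponent $\nu(L)$ while tracking the identity-routing in $\varrho_r$-networks. A secondary point is the range of $\alpha$: the spline inverse theorem needs $s=\alpha/\nu<k=r^{L-1}+1$, so for $\alpha\ge\nu k$ one must argue separately --- a bounded-order piecewise-polynomial class saturates, so $E(f,\mathcal{S}_m)_p$ cannot decay faster than the saturation rate unless $f$ is itself a global polynomial of degree $<k$, in which case $A^\alpha_q$ consists only of such polynomials, which lie in every Besov space and make the embedding trivial. Finally, because the secondary index $q$ varies with $\alpha$, I would make sure that all intermediate approximation-space quasi-norms are compared with the \emph{same} $q$, which they are by construction.
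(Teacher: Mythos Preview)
The paper does not prove this theorem; it merely states it, attributes it to \cite{gribonval2022approximation}, and comments on its restrictiveness compared to Theorem~\ref{th: Bsov embedded in NN aproximation space}. So there is no in-paper proof to compare against.

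That said, your outline is the correct one and is essentially the argument one finds in \cite{gribonval2022approximation}: (i) a structural lemma bounding the number of pieces of a bounded-depth $\varrho_r$-realization on $(0,1)$ polynomially in $M(\phi)$, with degree bounded by $r^{L-1}$; (ii) the resulting inclusion $\Sigma_n\subset\mathcal S_{cn^{\nu}}$ into free-knot splines of fixed order; (iii) a reindexing that turns rate $\alpha$ into rate $\alpha/\nu$; (iv) the DeVore--Popov inverse theorem $A^{s}_{\tau}(L^p,(\mathcal S_m))\hookrightarrow B^s_{\tau,\tau}$ with $1/\tau=s+1/p$. Two points worth tightening: first, in the piece-counting induction you should not invoke AM--GM on the layer widths $n_\ell$ directly, since it is the product of the \emph{per-layer breakpoint multipliers} (each of size $O(n_\ell)$ up to constants depending on $r$) that matters, and the constraint is on $\sum_\ell M_\ell(\phi)$ rather than on $\sum_\ell n_\ell$; the cleanest route is to bound the number of pieces after layer $\ell$ recursively and then optimise over width profiles under the total-weight constraint. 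Second, your saturation argument for large $\alpha$ is not quite right: free-knot splines of order $k$ do \emph{not} saturate in the nonlinear setting (unlike fixed-knot splines), so the inverse theorem in (iv) genuinely requires $s<k$, and one handles larger $\alpha$ simply by increasing $\nu$ if necessary (the statement only asserts existence of some $\nu(L)$, not a sharp one).
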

Note that this theorem is way less general than Theorem \ref{th: Bsov embedded in NN aproximation space} requiring a bound on the number of layers of the approximating NNs, a fixed value for $q$ and being in dimension one.

The results of these chapters are highly relevant to Physics Informed Neural Networks, defined in Section \ref{sec: comments and other results chap 0}. The objective of a PINN is, at the end of its training, approximate the solution and its associate PDE conditions. It is then necessary to see the approximation capabilities on spaces which considers some kind of regularities, like the Besov spaces or Sobolev spaces.

Density theorems for balls in the Sobolev spaces $W^{1, p}$ can be found in Theorem 4.1 of \cite{abdeljawad2022approximations} and in Theorem 4.7 \cite{shen2022approximation} while giving bounds on the number of parameters when approximating. For the fractional Sobolev spaces $W^{s, p}((0, 1)^d)$ with $s \in [0, 1]$, the Theorem 4.1 in \cite{guhring2020error} shows that functions on usual Sobolev spaces can be approximated in the norm of fractional Sobolev spaces. For higher order derivative approximation, Theorem 4.9 of \cite{guhring2021approximation} shows that classic NN can approximate any function $f \in W^{n, p}((0, 1)^d)$ in the space $W^{k, p}((0, 1)^d)$ if $k, n \in \N$ and $k < n$. Said theorem gives upper bounds for the size of the NN needed to approximate. 

There are other recent and important results. Concerning inverse problems, the paper \cite{puthawala2022globally} characterizes injective realizations of ReLU NN, shows that any continuous functions can be approximated by injective realization of ReLU NN and if we add some hypothesis to the functions we approximate, there are Lipschitz realization of ReLU NN. Moreover, the paper \cite{furuya2024globally} repeats the work of \cite{puthawala2022globally} but the authors work with Neural Operators to approximate operators. Finally, an important convergence result of the gradient descent algorithm for NN can be found in \cite{du2019gradient}.
\indiceFiguras
\biblioTFG{4}
\printglossaries

@article{kutyniok2022theoretical,
  title={A Theoretical Analysis of Deep Neural Networks and Parametric {PDEs}},
  author={Kutyniok, Gitta and Petersen, Philipp and Raslan, Mones and Schneider, Reinhold},
  journal={Constructive Approximation},
  volume={55},
  number={1},
  pages={73--125},
  year={2022},
  publisher={Springer}
}

@article{YAROTSKY2017103,
title = {Error bounds for approximations with deep {ReLU} networks},
journal = {Neural Networks},
volume = {94},
pages = {103-114},
year = {2017},
issn = {0893-6080},
doi = {https://doi.org/10.1016/j.neunet.2017.07.002},
url = {https://www.sciencedirect.com/science/article/pii/S0893608017301545},
author = {Dmitry Yarotsky},
keywords = {Deep ReLU networks, Approximation complexity},
abstract = {We study expressive power of shallow and deep neural networks with piece-wise linear activation functions. We establish new rigorous upper and lower bounds for the network complexity in the setting of approximations in Sobolev spaces. In particular, we prove that deep ReLU networks more efficiently approximate smooth functions than shallow networks. In the case of approximations of 1D Lipschitz functions we describe adaptive depth-6 network architectures more efficient than the standard shallow architecture.}
}

@article{46201,
  title={Attention is all you need},
  author={Vaswani, Ashish and Shazeer, Noam and Parmar, Niki and Uszkoreit, Jakob and Jones, Llion and Gomez, Aidan N and Kaiser, {\L}ukasz and Polosukhin, Illia},
  journal={Advances in neural information processing systems},
  volume={30},
  year={2017}
}

@inproceedings{du2019gradient,
  title={Gradient Descent Finds Global Minima of Deep Neural Networks},
  author={Du, Simon and Lee, Jason and Li, Haochuan and Wang, Liwei and Zhai, Xiyu},
  booktitle={International conference on machine learning},
  pages={1675--1685},
  year={2019},
  organization={PMLR}
}

@article{Elbr_chter_2021,
    title={{DNN} Expression Rate Analysis of High-Dimensional {PDEs}: Application to Option Pricing},
    volume={55},
    ISSN={1432-0940},
    url={http://dx.doi.org/10.1007/s00365-021-09541-6},
    DOI={10.1007/s00365-021-09541-6},
    number={1},
    journal={Constructive Approximation},
    publisher={Springer Science and Business Media LLC},
    author={Elbrächter, Dennis and Grohs, Philipp and Jentzen, Arnulf and Schwab, Christoph},
    year={2021},
    month=may,
    pages={3–71}
}

@book{Quarteroni2016,
    title={Reduced Basis Methods for Partial Differential Equations: An Introduction},
    volume={82},
    DOI={10.1007/978-3-319-15431-2},
    author={Quarteroni, Andrea and Manzoni, Federico Negri},
    publisher={Springer Cham},
    year={2015}
}

@book{layton2014numerical,
  title={Numerical Linear Algebra},
  author={Layton, W. and Sussman, M.},
  isbn={9781312329850},
  url={https://books.google.es/books?id=nFDpBgAAQBAJ},
  year={2014},
  publisher={Lulu Enterprises Incorporated}
}

@book{evans2022partial,
  title={Partial Differential Equations},
  author={Evans, Lawrence C},
  volume={19},
  year={2022},
  publisher={American Mathematical Society}
}

@article{fukushima1969visual,
  title={Visual Feature Extraction by a Multilayered Network of Analog Threshold Elements},
  author={Fukushima, Kunihiko},
  journal={IEEE Transactions on Systems Science and Cybernetics},
  volume={5},
  number={4},
  pages={322--333},
  year={1969},
  publisher={IEEE}
}

@article{krizhevsky2012imagenet,
  title={{ImageNet} Classification With {Deep Convolutional Neural Networks}},
  author={Krizhevsky, Alex and Sutskever, Ilya and Hinton, Geoffrey E},
  journal={Advances in neural information processing systems},
  volume={25},
  year={2012}
}

@inproceedings{abadi2016tensorflow,
  title={TensorFlow: A system for Large-Scale machine learning},
  author={Martín Abadi and Paul Barham and Jianmin Chen and Zhifeng Chen and Andy Davis and Jeffrey Dean and Matthieu Devin and Sanjay Ghemawat and Geoffrey Irving and Michael Isard and Manjunath Kudlur and Josh Levenberg and Rajat Monga and Sherry Moore and Derek G. Murray and Benoit Steiner and Paul Tucker and Vijay Vasudevan and Pete Warden and Martin Wicke and Yuan Yu and Xiaoqiang Zheng},
  booktitle={12th USENIX symposium on operating systems design and implementation (OSDI 16)},
  pages={265--283},
  year={2016}
}

@article{paszke2019pytorch,
  title={Pytorch: An Imperative Style, High-Performance Deep Learning Library},
  author={Adam Paszke and Sam Gross and Francisco Massa and Adam Lerer and James Bradbury and Gregory Chanan and Trevor Killeen and Zeming Lin and Natalia Gimelshein and Luca Antiga and Alban Desmaison and Andreas Köpf and Edward Yang and Zach DeVito and Martin Raison and Alykhan Tejani and Sasank Chilamkurthy and Benoit Steiner and Lu Fang and Junjie Bai and Soumith Chintala},
  journal={Advances in neural information processing systems},
  volume={32},
  year={2019}
}

@book{folland1999real,
  title={Real Analysis: Modern Techniques and Their Applications},
  author={Folland, Gerald B},
  volume={40},
  year={1999},
  publisher={John Wiley \& Sons}
}

@book{devore1993constructive,
  title={Constructive Approximation},
  author={DeVore, Ronald A and Lorentz, George G},
  volume={303},
  year={1993},
  publisher={Springer Science \& Business Media}
}

@article{gribonval2022approximation,
  title={Approximation Spaces of Deep Neural Networks},
  author={Gribonval, R{\'e}mi and Kutyniok, Gitta and Nielsen, Morten and Voigtlaender, Felix},
  journal={Constructive approximation},
  volume={55},
  number={1},
  pages={259--367},
  year={2022},
  publisher={Springer}
}

@article{leshno1993multilayer,
  title={Multilayer Feedforward Networks With a Nonpolynomial Activation Function Can Approximate Any Function},
  author={Leshno, Moshe and Lin, Vladimir Ya and Pinkus, Allan and Schocken, Shimon},
  journal={Neural networks},
  volume={6},
  number={6},
  pages={861--867},
  year={1993},
  publisher={Elsevier}
}

@article{lin1993fundamentality,
  title={Fundamentality of Ridge Functions},
  author={Lin, Vladimir Ya and Pinkus, Allan},
  journal={Journal of Approximation Theory},
  volume={75},
  number={3},
  pages={295--311},
  year={1993},
  publisher={Elsevier}
}

@article{devore1988interpolation,
  title={Interpolation of {Besov} Spaces},
  author={DeVore, Ronald A and Popov, Vasil A},
  journal={Transactions of the American Mathematical Society},
  volume={305},
  number={1},
  pages={397--414},
  year={1988}
}

@inproceedings{johnen2006equivalence,
  title={On the Equivalence of the $K$-Functional and Moduli of Continuity and Some Applications},
  author={Johnen, Hans and Scherer, Karl},
  booktitle={Constructive Theory of Functions of Several Variables: Proceedings of a Conference Held at Oberwolfach April 25--May 1, 1976},
  pages={119--140},
  year={2006},
  organization={Springer}
}

@article{de1973spline,
  title={Spline Approximation by Quasiinterpolants},
  author={De Boor, Carl and Fix, George J},
  journal={Journal of Approximation Theory},
  volume={8},
  number={1},
  pages={19--45},
  year={1973},
  publisher={Academic Press}
}

@book{adams2003sobolev,
  title={Sobolev spaces},
  author={Adams, Robert A and Fournier, John JF},
  year={2003},
  publisher={Elsevier}
}

@article{raissi2019physics,
  title={Physics-informed neural networks: A deep learning framework for solving forward and inverse problems involving nonlinear partial differential equations},
  author={Raissi, Maziar and Perdikaris, Paris and Karniadakis, George E},
  journal={Journal of Computational Physics},
  volume={378},
  pages={686--707},
  year={2019},
  publisher={Elsevier}
}

@article{lu2019deeponet,
  title={{DeepONet}: Learning nonlinear operators for identifying differential equations based on the universal approximation theorem of operators},
  author={Lu, Lu and Jin, Pengzhan and Karniadakis, George Em},
  journal={arXiv:1910.03193},
  year={2019}
}

@article{kovachki2023neural,
  title={Neural operator: Learning Maps Between Function Spaces With Applications to {PDEs}},
  author={Kovachki, Nikola and Li, Zongyi and Liu, Burigede and Azizzadenesheli, Kamyar and Bhattacharya, Kaushik and Stuart, Andrew and Anandkumar, Anima},
  journal={Journal of Machine Learning Research},
  volume={24},
  number={89},
  pages={1--97},
  year={2023}
}

@article{kovachki2021universal,
  title={On Universal Approximation and Error Bounds for {Fourier} Neural Operators},
  author={Kovachki, Nikola and Lanthaler, Samuel and Mishra, Siddhartha},
  journal={Journal of Machine Learning Research},
  volume={22},
  number={290},
  pages={1--76},
  year={2021}
}

@article{abdeljawad2022approximations,
  title={Approximations with deep neural networks in {Sobolev} time-space},
  author={Abdeljawad, Ahmed and Grohs, Philipp},
  journal={Analysis and Applications},
  volume={20},
  number={03},
  pages={499--541},
  year={2022},
  publisher={World Scientific}
}

@article{shen2022approximation,
  title={Approximation With {CNNs} in {Sobolev} Space: With Applications to Classification},
  author={Shen, Guohao and Jiao, Yuling and Lin, Yuanyuan and Huang, Jian},
  journal={Advances in neural information processing systems},
  volume={35},
  pages={2876--2888},
  year={2022}
}

@article{guhring2021approximation,
  title = {Approximation rates for neural networks with encodable weights in smoothness spaces},
journal = {Neural Networks},
volume = {134},
pages = {107-130},
year = {2021},
issn = {0893-6080},
doi = {https://doi.org/10.1016/j.neunet.2020.11.010},
url = {https://www.sciencedirect.com/science/article/pii/S0893608020303956},
author = {Ingo Gühring and Mones Raslan},
keywords = {Neural networks, Expressivity, Approximation rates, Smoothness spaces, Encodable weights},
abstract = {We examine the necessary and sufficient complexity of neural networks to approximate functions from different smoothness spaces under the restriction of encodable network weights. Based on an entropy argument, we start by proving lower bounds for the number of nonzero encodable weights for neural network approximation in Besov spaces, Sobolev spaces and more. These results are valid for all sufficiently smooth activation functions. Afterwards, we provide a unifying framework for the construction of approximate partitions of unity by neural networks with fairly general activation functions. This allows us to approximate localized Taylor polynomials by neural networks and make use of the Bramble–Hilbert Lemma. Based on our framework, we derive almost optimal upper bounds in higher-order Sobolev norms. This work advances the theory of approximating solutions of partial differential equations by neural networks.}
}

@article{LIU201711,
title = {A survey of deep neural network architectures and their applications},
journal = {Neurocomputing},
volume = {234},
pages = {11-26},
year = {2017},
issn = {0925-2312},
doi = {https://doi.org/10.1016/j.neucom.2016.12.038},
url = {https://www.sciencedirect.com/science/article/pii/S0925231216315533},
author = {Weibo Liu and Zidong Wang and Xiaohui Liu and Nianyin Zeng and Yurong Liu and Fuad E. Alsaadi},
keywords = {Autoencoder, Convolutional neural network, Deep learning, Deep belief network, Restricted Boltzmann machine},
abstract = {Since the proposal of a fast learning algorithm for deep belief networks in 2006, the deep learning techniques have drawn ever-increasing research interests because of their inherent capability of overcoming the drawback of traditional algorithms dependent on hand-designed features. Deep learning approaches have also been found to be suitable for big data analysis with successful applications to computer vision, pattern recognition, speech recognition, natural language processing, and recommendation systems. In this paper, we discuss some widely-used deep learning architectures and their practical applications. An up-to-date overview is provided on four deep learning architectures, namely, autoencoder, convolutional neural network, deep belief network, and restricted Boltzmann machine. Different types of deep neural networks are surveyed and recent progresses are summarized. Applications of deep learning techniques on some selected areas (speech recognition, pattern recognition and computer vision) are highlighted. A list of future research topics are finally given with clear justifications.}
}

@article{cybenko1989approximation,
  title={Approximation by superpositions of a sigmoidal function},
  author={Cybenko, George},
  journal={Mathematics of control, signals and systems},
  volume={2},
  number={4},
  pages={303--314},
  year={1989},
  publisher={Springer}
}

@article{hornik1989multilayer,
  title={Multilayer feedforward networks are universal approximators},
  author={Hornik, Kurt and Stinchcombe, Maxwell and White, Halbert},
  journal={Neural networks},
  volume={2},
  number={5},
  pages={359--366},
  year={1989},
  publisher={Elsevier}
}

@article{puthawala2022globally,
  title={Globally injective {ReLU} networks},
  author={Puthawala, Michael and Kothari, Konik and Lassas, Matti and Dokmani{\'c}, Ivan and De Hoop, Maarten},
  journal={Journal of Machine Learning Research},
  volume={23},
  number={105},
  pages={1--55},
  year={2022}
}

@article{guhring2020error,
  title={Error bounds for approximations with deep {ReLU} neural networks in {$W^{s, p}$} norms},
  author={G{\"u}hring, Ingo and Kutyniok, Gitta and Petersen, Philipp},
  journal={Analysis and Applications},
  volume={18},
  number={05},
  pages={803--859},
  year={2020},
  publisher={World Scientific}
}

@article{furuya2024globally,
  title={Globally injective and bijective neural operators},
  author={Furuya, Takashi and Puthawala, Michael and Lassas, Matti and de Hoop, Maarten V},
  journal={Advances in Neural Information Processing Systems},
  volume={36},
  year={2024}
}

@article{ZHOU2020787,
title = {Universality of deep convolutional neural networks},
journal = {Applied and Computational Harmonic Analysis},
volume = {48},
number = {2},
pages = {787-794},
year = {2020},
issn = {1063-5203},
doi = {https://doi.org/10.1016/j.acha.2019.06.004},
url = {https://www.sciencedirect.com/science/article/pii/S1063520318302045},
author = {Ding-Xuan Zhou},
keywords = {Deep learning, Convolutional neural network, Universality, Approximation theory},
abstract = {Deep learning has been widely applied and brought breakthroughs in speech recognition, computer vision, and many other domains. Deep neural network architectures and computational issues have been well studied in machine learning. But there lacks a theoretical foundation for understanding the approximation or generalization ability of deep learning methods generated by the network architectures such as deep convolutional neural networks. Here we show that a deep convolutional neural network (CNN) is universal, meaning that it can be used to approximate any continuous function to an arbitrary accuracy when the depth of the neural network is large enough. This answers an open question in learning theory. Our quantitative estimate, given tightly in terms of the number of free parameters to be computed, verifies the efficiency of deep CNNs in dealing with large dimensional data. Our study also demonstrates the role of convolutions in deep CNNs.}
}

@article{garcia2023control,
  title={Control of partial differential equations via {Physics-Informed Neural Networks}},
  author={Garc{\'\i}a-Cervera, Carlos J and Kessler, Mathieu and Periago, Francisco},
  journal={Journal of Optimization Theory and Applications},
  volume={196},
  number={2},
  pages={391--414},
  year={2023},
  publisher={Springer}
}
\end{document}